\def\MR#1{}
\theoremstyle{plain}
\newtheorem{theorem}{Theorem}[section]
\newtheorem{lemma}[theorem]{Lemma}
\newtheorem{corollary}[theorem]{Corollary}
\newtheorem{proposition}[theorem]{Proposition}
\theoremstyle{definition}
\theoremstyle{remark}
\newtheorem{remark}[theorem]{Remark}
\newcommand{\dv}{\operatorname{div}}
\newcommand{\dist}{\operatorname{dist}}
\numberwithin{equation}{section}
\newcommand{\bR}{\mathbb{R}}
\newcommand{\bZ}{\mathbb{Z}}
\newcommand{\bS}{\mathbb{S}}
\def\dashint{\operatorname%
{\,\,\text{\bf--}\kern-.98em\DOTSI\intop\ilimits@\!\!}}
\begin{document}

\title[Optimal estimates for the insulated conductivity problem]{Optimal gradient estimates for the insulated conductivity problem with general convex inclusions case}
\author[H.G. Li]{Haigang Li}
\address[H.G. Li]{School of Mathematical Sciences, Beijing Normal University, Laboratory of MathematiCs and Complex Systems, Ministry of Education, Beijing 100875, China.}
\email{hgli@bnu.edu.cn}

\author[Y. Zhao]{Yan Zhao}
\address[Y. Zhao]{School of Mathematical Sciences, Beijing Normal University, Laboratory of MathematiCs and Complex Systems, Ministry of Education, Beijing 100875, China.}
\email{zhaoyan\_9926@mail.bnu.edu.cn}

\begin{abstract}
We study the insulated conductivity problem which involves two adjacent convex insulators embedded in a bounded domain. It is known that the gradient of solutions may blow up as the distance between the two inclusions tends to zero. However, the sharpness of the blow up rate for general convex insulator case in dimension $n\geq3$ has remained open. The novelty of this paper is that we
answer this problem affirmatively by establishing a pointwise upper bound of the
gradient for general convex insulators, along with a corresponding lower bound that achieves optimal blow up rates. These rates are associated with the first nonzero eigenvalue of an elliptic operator determined by the geometry of insulators. Our results improve and make complete the previous result for ball insulators case studied in \cite{DLY}.
 
\end{abstract}
\maketitle

\section{introduction and main theorem }

In this paper we establish the optimal gradient estimates for the insulated conductivity problem involving two adjacent general convex inclusions. The study of this area originated from \cite{BASL}, where the problem with inclusions closely located in a linear elastic background medium was studied numerically. We consider a bounded domain $D\subset\mathbb{R}^{n}$ $(n\geq2)$ with a $C^{2}$ boundary that  contains two $C^{2,\gamma}$ convex open sets $D_1$ and $D_2$. We assume that they are located far away from the boundary $\partial{D}$, with $\dist(D_1\cup D_2,\partial{D})>c>0$, and that the distance between two inclusions $\varepsilon=\dist(D_1, D_2)$ is small. Let $u$ be the solution to the insulated conductivity problem:
\begin{equation} \label{equkzero}
\left\{
\begin{aligned}
\Delta u&=0\quad \mbox{in}~\widetilde\Omega:=  D\backslash (D_1\cup D_2),\\
\partial_{\nu}u&=0 \quad \mbox{on}~ \partial D_1\cup \partial D_2,\\
u&=\varphi\quad \mbox{on}~ \partial{D},
\end{aligned}
\right.
\end{equation}
for a given function $\varphi\in{C}^{1,\alpha}(\partial{D})$. Here $\partial_{\nu}u$ denotes the normal derivative on boundary $\partial D_1\cup \partial D_2$. It is well-known that there is a unique solution $u$ which is $C^{1,\alpha}$ on $\bar{\widetilde\Omega}$. The solution $u$ represents the electric potential. There is interest from the perspective of engineering in estimating the magnitude of the electric field $\nabla u$ in the narrow region between $D_{1}$ and $D_{2}$. An open problem is to find the optimal upper bound on $|\nabla u|$  in terms of $\varepsilon$. We refer the reader to \cite{ACKLY,BT2,BV,BC,DL,CS,DY,DZ,KLY1,KLY2,KL,Kel} and the references therein for background and related work on this problem.

In the case of $n = 2$, Ammari, Kang, and Lim \cite{AKL} demonstrated an upper bound for $|\nabla u|$ of order $\varepsilon^{-1/2}$ for circular inclusions by using the layer potential method. Ammari et al \cite{AKLLL} further showed that this blow up rate, $\varepsilon^{-1/2}$, is sharp in dimension $n=2$. For insulated conductivity problem \eqref{equkzero}, Bao, Li and Yin \cite{BLY2} also obtained an upper bound of $|\nabla u|$ of order $\varepsilon^{-1/2}$ for all dimensions $n\geq2$ for convex insulators of arbitrary shape. However, the question of whether this upper bound is optimal in dimensions $n\geq3$ remained open for about one decade. It was not until recently that Li and Yang \cite{LY} took advantage of a Harnack inequality to improve the upper bound in dimensions $n\geq3$ to be on the order of $\varepsilon^{-1/2+\beta}$ for some $\beta>0$. Subsequently, Weinkove \cite{Weinkove} employed a direct maximum principle argument and established an upper bound of order $\varepsilon^{-1/2+\beta(n)}$ with a specific constant $\beta(n)>0$ for $n\geq4$ in the case where $D_{1}$ and $D_{2}$ are two balls. Later, Dong, Li, and Yang gave the optimal $\beta(n)$ for a certain class of inclusions, including two balls in all dimension $n\geq3$, particularly with an explicit formula $\beta(n)= [-(n-1)+\sqrt{(n-1)^2+4(n-2)}]/4$ when the insulators are balls in \cite{DLY}. Dong, Yang and Zhu \cite{DYZ} further investigated the insulated conductivity problem with $p$-Laplacian, where the current-electric field relation follows the power law $J=|E|^{p-2}E$.

We use the notation $x=(x',x_n)$ to represent a point in $\mathbb{R}^{n}$, where $x'\in \mathbb{R}^{n-1}$. After a possible translation and rotation of the coordinate if necessary, we assume that the origin $0\in\partial D_2$ is one endpoint of the shortest line between $\partial{D}_{1}$ and $\partial{D}_{2}$, and there exists a universal constant $0<R_0<1$, such that near the origin,  the parts of $\partial D_1$ and $\partial D_2$, denoted by $\Gamma_+$ and $\Gamma_-$, can be represented by graphs of two functions in terms of $x'$. That is, 
\begin{align*}
\Gamma_+ = \left\{ x_n =  \varepsilon+f(x'),~|x'|<2R_0\right\}~~ \mbox{and} ~~\Gamma_- = \left\{ x_n = g(x'),~|x'|<2R_0\right\},
\end{align*}
where $f$ and $g$ are two $C^{2,\gamma}$ functions, $0 < \gamma < 1$, satisfying
\begin{equation}\label{fg_0}
f(x')>g(x')\quad\mbox{for}~~0<|x'|<2R_{0},
\end{equation}
\begin{equation}\label{fg_1}
f(0')=g(0')=0,\quad\nabla_{x'}f(0')=\nabla_{x'}g(0')=0, \quad D^2 (f-g)(0') > 0.
\end{equation}
For $0 < r\leq 2R_{0}$, we denote
\begin{align*}
\Omega_{r}:=\left\{(x',x_{n})\in \Omega~\big|~g(x')<x_{n}< \varepsilon+f(x'),~|x'|<r\right\}.
\end{align*}
Throughout this paper, we denote 
\begin{equation}\label{a_assumption}
a^{ij}=\frac{1}{2}\partial_{ij}(f-g)(0'),\quad\mbox{and}~ a(\xi)=\sum_{i=1}^{n-1}\sum_{j=1}^{n-1}a^{ij}\frac{x_i}{|x'|}\frac{x_j}{|x'|}, \,\,for\,\, \xi\in \bS^{n-2}.
\end{equation} 
It is apparent that $a(\xi)>0\,\, a.e.$ and $\ln a(\xi)\in L^{\infty}(\bS^{n-2}).$ Because $f$ and $g$ are in the class $C^{2,\gamma}$ and satisfy \eqref{fg_0} and \eqref{fg_1}, we  denote, for $|x'|<2R_0$,  
\begin{equation}\label{fg_2}
\begin{aligned}
f(x')-g(x')&=\sum_{i,j=1}^{n-1}a^{ij}x_ix_j+O(|x'|^{2+\gamma}) =a(\xi)|x'|^2+O(|x'|^{2+\gamma}).
\end{aligned}
\end{equation} 

It was proved in \cite{DLY2} that the optimal blow-up rate of the gradient for the insulated conductivity problem \eqref{equkzero} is closely related to the first nonzero eigenvalue of the following eigenvalue problem on $\bS^{n-2}$:  
\begin{equation}\label{SL_problem}
-\dv_{\bS^{n-2}}\Big(a(\xi)\nabla_{\bS^{n-2}}u (\xi)\Big) =\lambda a(\xi) u(\xi), \quad \xi \in \bS^{n-2},
\end{equation}
where $a(\xi)$ is defined by \eqref{a_assumption}, determined by the Hessian $\partial_{ij}(f-g)(0')$.
We define the inner product
\begin{equation}\label{inner_product}
\langle u, v\rangle_{\bS^{n-2}}=\fint_{\bS^{n-2}} a(\xi) uv\,d\sigma.
\end{equation}
From the classical theory, all eigenvalues of \eqref{SL_problem} are real, and the corresponding eigenfunctions  can be normalized to form an orthonormal basis of $L^2(\bS^{n-2})$ under the inner-product defined by \eqref{inner_product}. The first nonzero eigenvalue $\lambda_1$ of problem \eqref{SL_problem} is given by the Rayleigh quotient:
\begin{equation}
\label{first_eigenvalue}
\lambda_1:=\inf_{u\not\equiv 0,\langle u, 1\rangle_{\bS^{n-2}}=0}
\frac{\fint_{\bS^{n-2}}a(\xi)|\nabla_{\bS^{n-2}}u|^2\,d\sigma}
{\fint_{\bS^{n-2}}a(\xi)|u|^2\,d\sigma}.
\end{equation}
Let $\alpha(\lambda_1)$ be the positive root of the quadratic polynomial $\alpha^2 + (n-1)\alpha - \lambda_1$, that is,
\begin{equation}
\label{alpha_lambda_1}
\alpha(\lambda_1) = \frac{-(n-1) + \sqrt{(n-1)^2 + 4 \lambda_1}}{2}.
\end{equation}
According to Lemma 5.1 in \cite{DLY2}, we conclude that $\lambda_1\leq n-2$ and hence $\alpha(\lambda_1)\in(0,1)$. In the case of circular inclusions, where $a(\xi)\equiv a $ is a positive constant and $\lambda_{1}=n-2$, Dong, Li, and Yang in \cite{DLY} proved that the optimal blow-up rate of the gradient is $\varepsilon^{\frac{\alpha(\lambda_1)-1}{2}}$.  In \cite{DLY2}, they extended to study general convex inclusions case and proved that 
\begin{equation}\label{alpha}
|\nabla u(x)|\leq\,C\|u\|_{L^{\infty}(\Omega_{2R_{0}})}(\varepsilon+|x'|^{2})^{\frac{\alpha-1}{2}},\quad\mbox{for}~x\in\Omega_{R_{0}},~\,0\leq\alpha<\alpha(\lambda_1),
\end{equation}
where $C$ is a constant depending on a lower bound of $\alpha(\lambda_1)-\alpha$. However, it is not clear whether $\varepsilon^{\frac{\alpha(\lambda_1)-1}{2}}$ is an optimal upper bound for general convex inclusions. In this paper, we show that $\varepsilon^{\frac{\alpha(\lambda_1)-1}{2}}$ indeed is an upper bound for $\nabla u$, with its sharpness proved in dimension $n=3$ and for a certain class of inclusions in dimensions $n\geq4$.  

By applying standard elliptic estimates, the solution $u\in H^{1}(\widetilde\Omega)$ of \eqref{equkzero} satisfies $\|u\|_{C^{1}(\widetilde\Omega\setminus\Omega_{R_{0}/2})}\leq\,C$. We only need to focus on the following problem near the origin:
\begin{equation}\label{main_problem_narrow}
\left\{
\begin{aligned}
-\Delta u&=0 \quad \mbox{in }\Omega_{2R_0},\\
\partial_{\nu}u&=0 \quad \mbox{on}\,\, \Gamma_+ \cup \Gamma_-.
\end{aligned}
\right.
\end{equation}
Our main results of this paper are as follows.

\begin{theorem}\label{Thm1_upper}
For $n\geq3$, $0<\varepsilon\ll1$, let $u\in H^1(\Omega_{2R_0})$ be a solution of \eqref{main_problem_narrow} with $f$ and $g$ satisfying \eqref{fg_0}, \eqref{fg_1} and \eqref{fg_2}. We have
\begin{equation}\label{mainbound}
|\nabla u(x')|\leq C\|u\|_{L^{\infty}(\Omega_{2R_0})}(\varepsilon+|x'|^2)^{\frac{\alpha(\lambda_1)-1}{2}} \quad \forall x\in\Omega_{R_0 },
\end{equation}
where $\lambda_1$ and $\alpha(\lambda_1)$ are given by \eqref{first_eigenvalue} and \eqref{alpha_lambda_1}, and $C$ is a positive constant depending only on $n$, $R_0$, $\gamma$, $\|f\|_{C^{2,\gamma}(\Gamma^{+})}$ , $\|g\|_{C^{2,\gamma}(\Gamma^{-})}$ and $\|\ln a(\xi)\|_{L^{\infty}(\bS^{n-2})}$.  
\end{theorem}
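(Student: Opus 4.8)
\emph{Proof strategy.} The plan is to establish a \emph{sharp} energy decay estimate for $F(r):=\int_{\Omega_r}|\nabla u|^{2}$ in the neck, with the decay exponent pinned down \emph{exactly} by the first nonzero eigenvalue $\lambda_1$ of \eqref{SL_problem}, and then to convert this decay into the pointwise bound \eqref{mainbound} by a rescaling argument; the sub-optimal estimate \eqref{alpha}, available for every $\alpha<\alpha(\lambda_1)$, will be used only as an a priori input to absorb the errors caused by the curvature of $\Gamma_{\pm}$. As a first step I would flatten the neck via $y'=x'$ and $y_n=(x_n-g(x'))/(\varepsilon+f(x')-g(x'))$, which maps $\Omega_r$ onto the straight cylinder $\{|y'|<r,\ 0<y_n<1\}$ and turns \eqref{main_problem_narrow} into a divergence-form equation $\partial_i\!\left(b^{ij}(y)\partial_j v\right)=0$ with conormal conditions on $\{y_n=0,1\}$. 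Writing $y'=\rho\xi$ with $\rho=|y'|$ and $\xi\in\bS^{n-2}$, and using \eqref{fg_2}, the coefficients $b^{ij}$ split as a ``model'' part whose cross-sectional operator is precisely the one in \eqref{SL_problem}, plus a perturbation of relative size $O(\rho^{\gamma})$.

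The analytic heart is a sharp cross-sectional estimate. Decompose $v$ on each slice $\{|y'|=\rho\}$ into its $\langle\cdot,1\rangle_{\bS^{n-2}}$-average $\overline v(\rho,y_n)$ (constant along the sphere) and the orthogonal remainder $w$, and expand $w$ in the eigenfunctions of \eqref{SL_problem}. For the model equation the radial profile of $\overline v$ obeys an Euler equation with indicial polynomial $\alpha^{2}+(n-1)\alpha$, whose only finite-energy root is $0$, so $\overline v$ is essentially constant near $\rho=0$ and contributes only lower-order terms to $\nabla u$; the $k$-th eigenmode of $w$ has indicial polynomial $\alpha^{2}+(n-1)\alpha-\lambda_k$, whose finite-energy root is the positive one $\alpha(\lambda_k)$, the slowest over $k\ge1$ being $\alpha(\lambda_1)$ from \eqref{alpha_lambda_1}, while the nonzero $y_n$-Fourier modes decay much faster. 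Quantitatively the gain is the Poincaré inequality
\[
\fint_{\bS^{n-2}}a\,|w|^{2}\,d\sigma\ \le\ \frac{1}{\lambda_1}\fint_{\bS^{n-2}}a\,|\nabla_{\bS^{n-2}}w|^{2}\,d\sigma
\]
for all $w$ with $\langle w,1\rangle_{\bS^{n-2}}=0$, which holds with the \emph{optimal} constant by the variational characterization \eqref{first_eigenvalue}; since $\lambda_1\le n-2$ (Lemma 5.1 of \cite{DLY2}) one has $\alpha(\lambda_1)\in(0,1)$, i.e. a genuine blow-up.

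Combining a Caccioppoli inequality on a dyadic annulus, the decomposition above, and the sharp Poincaré inequality, I would derive a one-step iteration inequality of the form
\[
F(r)\ \le\ 2^{-(n-1+2\alpha(\lambda_1))}\bigl(1+C r^{\gamma}\bigr)\,F(2r)\ +\ C\,\|u\|_{L^{\infty}(\Omega_{2R_0})}^{2}\,r^{\,n-1+2\alpha(\lambda_1)+\gamma}\qquad(\sqrt\varepsilon\le r\le R_0/2),
\]
where the two error terms collect the curvature and coupling defects: the additive one is of the type $\int_{\Omega_{2r}}\rho^{\gamma}|\nabla u|^{2}$ and is controlled by inserting \eqref{alpha} with a fixed $\alpha\in(\alpha(\lambda_1)-\tfrac\gamma2,\alpha(\lambda_1))$, while the multiplicative factor $\prod_j\bigl(1+C r_j^{\gamma}\bigr)$ over dyadic scales $r_j=2^{-j}R_0$ converges. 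Iterating down to $r\sim\sqrt\varepsilon$ and using the interior bound $F(R_0)\le C\|u\|_{L^{\infty}(\Omega_{2R_0})}^{2}$ yields $F(r)\le C\,\|u\|_{L^{\infty}(\Omega_{2R_0})}^{2}\,r^{\,n-1}(\varepsilon+r^{2})^{\alpha(\lambda_1)}$ for $0<r\le R_0$, the range $r\le\sqrt\varepsilon$ (where the neck thickness is comparable to the constant $\varepsilon$) being handled by a separate and easier estimate on the core. Finally, \eqref{mainbound} follows by the standard rescaling of the slab of thickness $\varepsilon+|x_0'|^{2}$ around a point $x_0\in\Omega_{R_0}$ together with local $C^{1,\alpha}$ estimates for the conormal problem, as in \cite{DLY2}.

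The step I expect to be the main obstacle is proving the one-step iteration inequality with the \emph{exact} factor $2^{-(n-1+2\alpha(\lambda_1))}$: one must disentangle cleanly the $\bS^{n-2}$-average of $v$ from its orthogonal remainder and from the nonzero $y_n$-Fourier modes, show that all the cross-terms and the $O(\rho^{\gamma})$ coefficient perturbation produce only errors compatible with the $\gamma$-gain (which is exactly where \eqref{alpha} enters), and verify that no loss is incurred in the constant; it is precisely here that the sharpness of the Rayleigh quotient \eqref{first_eigenvalue}, and not merely a lower bound on $\lambda_1$, is required. A direct barrier/maximum-principle approach looks harder in this generality, since an optimal supersolution would have to be built from the generically non-explicit first eigenfunction of \eqref{SL_problem}.
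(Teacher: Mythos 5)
Your proposal is correct in overall strategy and shares the paper's key ingredients (the Rayleigh quotient \eqref{first_eigenvalue} and the sub-optimal a priori bound \eqref{alpha} to absorb curvature errors), but the technical route is genuinely different. The paper first performs a full dimensional reduction: it averages $u$ over the $x_n$-fiber to obtain $\bar u(x')$, which solves an $(n-1)$-dimensional degenerate equation $\partial_i((\varepsilon + a(\xi)|x'|^2)\partial_i\bar u) = -\partial_i F^i$; the relation between $u$ and $\bar u$ is controlled by the ``flipping argument'' of Lemmas~\ref{lemmaestu11}--\ref{lemmaestu22} and Corollary~\ref{rubaru}. The monotone quantity is not the energy $F(r)=\int_{\Omega_r}|\nabla u|^2$ but the weighted spherical $L^2$-oscillation $\omega(\rho)=(\fint_{\partial B_\rho}a(\xi)|\bar u-\bar u(0')|^2\,d\sigma)^{1/2}$. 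The sharp factor $(\rho/R)^{\alpha(\lambda_1)}$ is then isolated in a single explicit piece $v_1$ solving the $\varepsilon$-free equation $\dv(a(\xi)|x'|^2\nabla v_1)=0$ (Lemma~\ref{lemma_v1_gen} and Lemma~\ref{corov1}), while the $\varepsilon$-dependent and curvature-induced terms are split off as $v_2,\dots,v_5$ and bounded in $L^\infty$ by a barrier/maximum-principle argument (Lemma~\ref{maximumv22}) and a Moser iteration with Caffarelli--Kohn--Nirenberg weights (Lemma~\ref{Linftyv4}). Proposition~\ref{Thm1_upper1} then yields \eqref{estomegaR}, a one-step inequality exactly of the shape you are aiming for but at the level of $\omega(\rho)$, and the iteration closes by choosing $\tilde\alpha$ with $\gamma/2+\tilde\alpha>\alpha(\lambda_1)$.

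The place where your plan is thin is precisely where you flag it. ``Caccioppoli plus sharp Poincaré'' alone will not produce the exact prefactor $2^{-(n-1+2\alpha(\lambda_1))}$: it needs the full spectral decomposition with the indicial analysis you sketch, and, crucially, one must also separate the $\varepsilon$-independent model operator from the actual $\varepsilon$-dependent one, since Lemma~\ref{lemma_v1_gen} applies to $\dv(a(\xi)|x'|^2\nabla\cdot)$ and not to $\dv((\varepsilon+a(\xi)|x'|^2)\nabla\cdot)$; the paper's $v_2$-correction (built on the identity \eqref{epsilonv1}) is exactly the device that makes this separation rigorous without losing the constant, and you would need an analogue of it. You also leave implicit the control of $u-\bar u$ (equivalently, the nonzero $y_n$-modes); while your heuristic that they decay ``much faster'' is right, making it quantitative is the content of the flipping lemmas, without which the final pass from $L^2$-decay of $\bar u$ to the pointwise bound on $\nabla u$ does not close. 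Finally, a small bookkeeping point: the gain you can extract from \eqref{alpha} with $\alpha\in(\alpha(\lambda_1)-\gamma/2,\alpha(\lambda_1))$ is $\gamma/2$, not the full $\gamma$ advertised in your additive error, but this is still enough for the dyadic iteration to converge.
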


\begin{remark}
Estimate \eqref{mainbound} improves the exponent from $\frac{\alpha-1}{2}$ in \eqref{alpha} to $\frac{\alpha(\lambda_1)-1}{2}$, which achieves the optimality shown by Theorem \ref{Thm3_lower} below. 
\end{remark}

Applying the maximum principle and Theorem \ref{Thm1_upper}, we  have

\begin{corollary}\label{Cor1}
For $n\geq3$, $0<\varepsilon\ll1$, let $f$ and $g$ satisfy \eqref{fg_0}, \eqref{fg_1} and \eqref{fg_2}. For any solution $u\in H^1(\widetilde\Omega)$ of \eqref{equkzero}, we have
\begin{equation}
\|\nabla u\|_{L^{\infty}(\Omega)}\leq C\|\varphi\|_{C^{2}(\partial{D})}\varepsilon^{\frac{\alpha(\lambda_1)-1}{2}},
\end{equation}
where $\lambda_1$ and $\alpha(\lambda_1)$ are given by \eqref{first_eigenvalue} and \eqref{alpha_lambda_1}, and $C$ is a positive constant depending only on $n$, $R_0$, $\gamma$, $\|\partial D_1\|_{C^{2,\gamma}}$,  $\|\partial D_2\|_{C^{2,\gamma}}$, $\|\partial  \Omega\|_{C^{2}}$ and $\|\ln a(\xi)\|_{L^{\infty}(\bS^{n-2})}$.
\end{corollary}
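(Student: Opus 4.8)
The plan is to deduce the global estimate from the local one in Theorem~\ref{Thm1_upper} using two classical ingredients: a maximum principle to control $\|u\|_{L^\infty}$ by the Dirichlet data $\varphi$, and standard elliptic regularity in the part of $\widetilde\Omega$ away from the thin neck $\Omega_{R_0}$. \emph{Step 1 ($L^\infty$ bound).} I would first show $\|u\|_{L^\infty(\widetilde\Omega)}\le\|\varphi\|_{L^\infty(\partial D)}$. Since $u$ is harmonic in $\widetilde\Omega=D\setminus(D_1\cup D_2)$, it attains its maximum over $\overline{\widetilde\Omega}$ on $\partial D\cup\partial D_1\cup\partial D_2$. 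A maximum cannot occur at an interior point $x_0$ of $\partial D_1\cup\partial D_2$ unless $u$ is locally constant there: because $D_1,D_2$ are convex with $C^{2,\gamma}$ boundaries, $\widetilde\Omega$ satisfies an interior ball condition at $x_0$, so Hopf's lemma would force $\partial_\nu u(x_0)>0$, contradicting the Neumann condition in \eqref{equkzero} (and in the locally constant case one propagates the equality to $\partial D$). Hence $\max_{\overline{\widetilde\Omega}}u=\max_{\partial D}\varphi$, and likewise for the minimum, so $\|u\|_{L^\infty(\widetilde\Omega)}\le\|\varphi\|_{L^\infty(\partial D)}\le\|\varphi\|_{C^2(\partial D)}$.

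\emph{Step 2 (the neck).} The restriction of $u$ to $\Omega_{2R_0}$ lies in $H^1(\Omega_{2R_0})$ and solves \eqref{main_problem_narrow}, with $f,g$ satisfying \eqref{fg_0}, \eqref{fg_1}, \eqref{fg_2} by the setup near the origin; thus Theorem~\ref{Thm1_upper} applies and, together with Step~1, gives for $x\in\Omega_{R_0}$
\begin{equation*}
|\nabla u(x)|\le C\|u\|_{L^\infty(\Omega_{2R_0})}(\varepsilon+|x'|^2)^{\frac{\alpha(\lambda_1)-1}{2}}\le C\|\varphi\|_{C^2(\partial D)}\,\varepsilon^{\frac{\alpha(\lambda_1)-1}{2}},
\end{equation*}
where in the last step I used that $\alpha(\lambda_1)\in(0,1)$, so the exponent $\tfrac{\alpha(\lambda_1)-1}{2}$ is negative and $(\varepsilon+|x'|^2)^{\frac{\alpha(\lambda_1)-1}{2}}$ is a decreasing function of $|x'|$, hence maximal at $x'=0$ with value $\varepsilon^{\frac{\alpha(\lambda_1)-1}{2}}$.

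\emph{Step 3 (away from the neck, and conclusion).} On $\widetilde\Omega\setminus\Omega_{R_0}$ the two inclusion boundaries are separated by a distance $\gtrsim R_0^2$ (since the gap at $|x'|=R_0$ is $\varepsilon+f-g\simeq a(\xi)R_0^2$) and $\dist(D_1\cup D_2,\partial D)>c$, so interior gradient estimates for harmonic functions, boundary Schauder estimates for the homogeneous Neumann condition on the $C^{2,\gamma}$ surfaces $\partial D_1,\partial D_2$, and boundary Schauder estimates for the Dirichlet condition $u=\varphi$ on $\partial D\in C^2$ together give $\|\nabla u\|_{L^\infty(\widetilde\Omega\setminus\Omega_{R_0})}\le C\big(\|u\|_{L^\infty(\widetilde\Omega)}+\|\varphi\|_{C^{1,\alpha}(\partial D)}\big)\le C\|\varphi\|_{C^2(\partial D)}$. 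Since $R_0<1$ and $0<\varepsilon\ll1$ with $\tfrac{\alpha(\lambda_1)-1}{2}<0$, we have $\varepsilon^{\frac{\alpha(\lambda_1)-1}{2}}\ge1$, so this last bound is absorbed into $C\|\varphi\|_{C^2(\partial D)}\varepsilon^{\frac{\alpha(\lambda_1)-1}{2}}$. Combining Steps~2 and~3 over $\widetilde\Omega=\Omega_{R_0}\cup(\widetilde\Omega\setminus\Omega_{R_0})$ yields the corollary, and tracking the constants through the three steps (using $\|f\|_{C^{2,\gamma}(\Gamma^+)},\|g\|_{C^{2,\gamma}(\Gamma^-)}\le C\|\partial D_i\|_{C^{2,\gamma}}$) gives the stated dependence.

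The argument is essentially routine, since all of the analytic difficulty resides in Theorem~\ref{Thm1_upper}; the only point that needs a word of care is the maximum principle in Step~1, where one must observe that convexity of $D_1$ and $D_2$ supplies the interior ball condition for $\widetilde\Omega$ at the insulated boundaries that is required to invoke Hopf's lemma and rule out interior boundary extrema there.
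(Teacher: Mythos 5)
Your argument is correct and follows the same route the paper takes, which it compresses into the single sentence ``Applying the maximum principle and Theorem~\ref{Thm1_upper}'': the maximum principle (Step~1) gives $\|u\|_{L^\infty(\widetilde\Omega)}\le\|\varphi\|_{L^\infty(\partial D)}$, Theorem~\ref{Thm1_upper} (Step~2) handles the neck, and standard elliptic/Schauder estimates away from the neck (Step~3) are exactly what the paper invokes just before \eqref{main_problem_narrow}. One small remark on Step~1: the interior ball condition for $\widetilde\Omega$ at a point of $\partial D_1\cup\partial D_2$ already follows from the $C^{2,\gamma}$ (indeed $C^{1,1}$) regularity of those boundaries and does not actually require convexity, although convexity also suffices, so your appeal to it is harmless.
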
 

The estimate \eqref{mainbound} is shown to be optimal by the following lower bound estimates.

\begin{theorem}\label{Thm3_lower}
For $n\geq3$, $0<\varepsilon\ll 1$, let $D_1$ and $D_2$ be two strictly convex smooth domains in $B_5$, which are symmetric in $x_j$ for each $1\leq j\leq n-1$, with $\lambda_1$ and $\alpha(\lambda_1)$ given by \eqref{first_eigenvalue} and \eqref{alpha_lambda_1}. Assume that the eigenspace corresponding to $\lambda_1$ contains a function which is odd in $x_j$ for some $1\leq j\leq n-1.$ Let $\varphi=x_j$ and $u\in H^1(\widetilde\Omega)$ be the solution of \eqref{equkzero}. Then we have
\begin{equation}\label{eqThm3_lower}
\|\nabla u\|_{L^{\infty}(\tilde \Omega)}\geq\frac{1}{C}\varepsilon^{\frac{\alpha(\lambda_1)-1}{2}},
\end{equation}
where $C$ depends only on $\|\partial D_1\|_{C^4}$, $\|\partial D_2\|_{C^4}$, $\|\ln a(\xi)\|_{L^{\infty}(\bS^{n-2})}$, $\alpha(\lambda_1)$ and $n$.
\end{theorem}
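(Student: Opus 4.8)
\emph{Overview.} Our plan for the lower bound \eqref{eqThm3_lower} is a barrier argument in the neck $\Omega_{R_0}$, following the scheme used in \cite{DLY} for balls with the explicit spherical harmonic there replaced by the solution of the eigenvalue problem \eqref{SL_problem}, and importing the auxiliary facts of \cite{DLY2}. Choose $j$ so that the $\lambda_1$-eigenspace of \eqref{SL_problem} contains a function $\phi_1$ odd in $x_j$; since $D_1,D_2$ are symmetric in $x_j$, the matrix $(a^{ik})$ of \eqref{a_assumption} has no off-diagonal entries in its $j$th row and column, so the weight $a(\xi)=\sum a^{ik}\xi_i\xi_k$ is even in $\xi_j$, and we may take $\phi_1$ with definite parity in every coordinate. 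By uniqueness for \eqref{equkzero}, the symmetry of the configuration in $x_j$, and $\varphi=x_j$, the solution $u$ is odd in $x_j$; hence $u\equiv0$ on $H:=\{x_j=0\}$, and by the strong maximum principle and Hopf's lemma $u>0$ in $\widetilde\Omega\cap\{x_j>0\}$. It therefore suffices to find the interior point $p=(0',\varepsilon/2)\in\widetilde\Omega$ with $\partial_{x_j}u(p)\ge C^{-1}\varepsilon^{\frac{\alpha(\lambda_1)-1}{2}}$, since then $\|\nabla u\|_{L^\infty(\widetilde\Omega)}\ge|\partial_{x_j}u(p)|$.

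\emph{Model profile and barrier.} Next I would analyse, on the half-space $\{y'\in\mathbb{R}^{n-1}:y_j>0\}$, the uniformly elliptic equation with polynomial coefficient
\[
\dv_{y'}\bigl((1+a(\xi)|y'|^2)\nabla_{y'}V\bigr)=0 ,
\]
noting that $a(\xi)|y'|^2=\sum a^{ik}y_iy_k$, and build the solution $V$ with $V=0$ on $\{y_j=0\}$ and leading behavior $V(y')=|y'|^{\alpha(\lambda_1)}\phi_1(y'/|y'|)(1+o(1))$ as $|y'|\to\infty$ — the exponent $\alpha(\lambda_1)$ of \eqref{alpha_lambda_1} being exactly the one for which $|y'|^{\alpha(\lambda_1)}\phi_1$ solves the equation with the $1$ deleted — obtained as a limit of Dirichlet problems on $B_L\cap\{y_j>0\}$ with nonnegative data, then extended oddly in $y_j$. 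Since the data are nonnegative and $\phi_1\not\equiv0$, the maximum principle gives $V>0$ in $\{y_j>0\}$, and since the coefficient equals $1$ at the origin Hopf's lemma gives $c_\ast:=\partial_{y_j}V(0')>0$, with $V(y')=c_\ast y_j+O(|y'|^2)$ near $0'$ and $|V(y')|\le C|y'|^{\alpha(\lambda_1)}$ throughout. Setting $w_0(x):=\kappa\,\varepsilon^{\alpha(\lambda_1)/2}V(x'/\sqrt\varepsilon)$ (independent of $x_n$, $\kappa>0$ small) gives $\partial_{x_j}w_0(p)=\kappa c_\ast\varepsilon^{\frac{\alpha(\lambda_1)-1}{2}}$ and $w_0=\kappa R_0^{\alpha(\lambda_1)}\phi_1(\xi)(1+o(1))$ on $\{|x'|=R_0\}$, an $\varepsilon$-independent profile vanishing linearly on $H$. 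I would then add an $x_n$-dependent corrector $w_1$ — a low-degree polynomial in $x_n-g(x')$ with $x'$-dependent coefficients odd in $x_j$ — so that $w:=w_0+w_1$ satisfies $-\Delta w\le0$ in $\Omega_{R_0}\cap\{x_j>0\}$ and $\partial_\nu w\le0$ on $\Gamma_\pm\cap\{x_j>0\}$, while $w_1$ and its gradient are of strictly lower order than those of $w_0$ throughout $\Omega_{R_0}$ down to the scale $|x'|\sim\sqrt\varepsilon$ (using \eqref{fg_2} and the regularity of $f,g$, as in the corrector analyses of \cite{DLY,DLY2}); being odd in $x_j$, $w_1$ still vanishes on $H$.

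\emph{Comparison and conclusion.} I would then compare $w$ with $u$ on $\Omega_{R_0}\cap\{x_j>0\}$. On $H$, $w=u=0$; on $\{|x'|=R_0\}\cap\{x_j>0\}$, $w\le\kappa R_0^{\alpha(\lambda_1)}\phi_1(\xi)(1+o(1))\le u$ once $\kappa$ is small, since there $u\ge c_0\,\dist(\cdot,H)$ with $c_0>0$ independent of $\varepsilon$ (from $u>0$, Hopf's lemma, and stability of \eqref{equkzero} under $\varepsilon\to0$ away from the touching point); and on $\Gamma_\pm$ the inequality $\partial_\nu w\le0$ is compatible with $\partial_\nu u=0$. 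The mixed Dirichlet–Neumann maximum principle gives $u\ge w$ on $\Omega_{R_0}\cap\{x_j>0\}$; since $u-w\ge0$ vanishes on $H$ and is smooth near $p$, differentiating across $H$ at $p$, and using that $w_1$ contributes only $o(\varepsilon^{\frac{\alpha(\lambda_1)-1}{2}})$ to $\partial_{x_j}w(p)$ because it carries there an extra power of the neck thickness $\sim\varepsilon$, I obtain
\[
\partial_{x_j}u(p)\ \ge\ \partial_{x_j}w(p)\ =\ \kappa c_\ast\,\varepsilon^{\frac{\alpha(\lambda_1)-1}{2}}+o\bigl(\varepsilon^{\frac{\alpha(\lambda_1)-1}{2}}\bigr)\ \ge\ \frac1C\,\varepsilon^{\frac{\alpha(\lambda_1)-1}{2}},
\]
which is \eqref{eqThm3_lower}.

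\emph{Main obstacle.} The decisive technical point is the corrector construction: $w_1$ must turn the superharmonic, Neumann-violating $w_0$ into an honest subsolution satisfying $\partial_\nu w\le0$ on the curved boundaries $\Gamma_\pm$, yet stay $w_0(1+o(1))$ near the origin so that the lower bound for $\partial_{x_j}w(p)$ is not destroyed; this forces a careful bookkeeping of the exact powers of $\sqrt\varepsilon$ and of $|x'|$ carried by every residual term, uniformly down to $|x'|\sim\sqrt\varepsilon$, for which the natural device is a two-scale expansion in $x'/\sqrt\varepsilon$ and $(x_n-g(x'))/(\varepsilon+f(x')-g(x'))$. The remaining ingredients are more routine: the positivity $c_\ast>0$, which rests on the fact that under the theorem's hypothesis the odd $\lambda_1$-eigenfunction of \eqref{SL_problem} coincides (up to sign) with the positive first Dirichlet eigenfunction of $-\dv_{\bS^{n-2}}(a\nabla_{\bS^{n-2}}\cdot)$ on the half-sphere $\{\xi_j>0\}$; the existence, regularity and sharp large-$|y'|$ asymptotics of $V$; and the $\varepsilon$-uniform lower bound for $u$ near $\{|x'|=R_0\}$.
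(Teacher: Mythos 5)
Your route is genuinely different from the paper's. The paper works with the vertically averaged function $\bar u(x')$, which solves the degenerate equation $\dv\bigl((\varepsilon+a(\xi)|x'|^2)\nabla\bar u\bigr)=-\partial_iF^i$, decomposes $\bar u = u_1 + u_2$ with $u_2 = O(\varepsilon^{\alpha(\lambda_1)/2})$ in $L^\infty$, expands $u_1 - u_1(0')$ in the eigenbasis $\{Y_{k,i}\}$ of \eqref{SL_problem}, and derives a second-order ODE for the coefficient $U_{1,j}(r)$. Reduction of order yields $U_{1,j}(r)=\tilde C_1 r^{\alpha(\lambda_1)} + O(r^{1+\alpha(\lambda_1)})$, and the positivity $\tilde C_1 \ge C_1 > 0$ comes from exactly the same sub/supersolution observation you made ($x_j$ is a subsolution in $\{x_j\ge0\}$, so $|\bar u(x')|\ge|x_j|$), projected onto $Y_{1,j}$. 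Evaluating at $r_\varepsilon \sim \sqrt\varepsilon$ produces a point where $|\bar u|\gtrsim\varepsilon^{\alpha(\lambda_1)/2}$, and since $u(0)=0$ the gradient bound follows. This sidesteps any barrier construction in the physical neck: the Neumann conditions are absorbed into the averaging, and the degenerate coefficient $\varepsilon+a(\xi)|x'|^2$ handles the neck geometry automatically.

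The gap in your proposal is exactly where you flag it: the corrector $w_1$. Your $w_0(x)=\kappa\,\varepsilon^{\alpha(\lambda_1)/2}V(x'/\sqrt\varepsilon)$ is a solution of the rescaled degenerate equation $\dv\bigl((\varepsilon+a(\xi)|x'|^2)\nabla w_0\bigr)=0$, not of $\Delta w_0 = 0$; expanding, $\Delta_{x'}w_0 = -\frac{\nabla_{x'}(a(\xi)|x'|^2)\cdot\nabla_{x'}w_0}{\varepsilon+a(\xi)|x'|^2}$, which is of size $\sim|x'|^{\alpha(\lambda_1)-2}\varepsilon^{\alpha(\lambda_1)/2}/(\varepsilon+|x'|^2)$ and has no sign. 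Simultaneously $\partial_\nu w_0$ on $\Gamma_\pm$ is $\sim \nabla f\cdot\nabla_{x'}w_0$, also sign-indefinite. You need $w_1$ to cancel both to the right sign, uniformly from $|x'|\sim R_0$ down to $|x'|\sim\sqrt\varepsilon$, while contributing only $o(\varepsilon^{\frac{\alpha(\lambda_1)-1}{2}})$ to $\partial_{x_j}w$ at $p$, and while respecting the odd-in-$x_j$ symmetry. You assert a "low-degree polynomial in $x_n-g(x')$" will do this, but a natural degree-two corrector typically fixes the Neumann condition only up to a new interior error of the same order as the one you are trying to cancel, forcing an iteration whose uniform control near $|x'|\sim\sqrt\varepsilon$ is precisely the hard estimate. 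The paper's averaged-equation technique avoids this tradeoff entirely. Until $w_1$ is actually produced and its two error estimates verified, the comparison step does not go through, so as written the proposal does not constitute a proof.
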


By employing the property of the eigenspace associated with the first nonzero eigenvalue $\lambda_1$ of \eqref{SL_problem}, as proved in \cite{DLY2}(Theorem 5.2 and Corollary 5.7 in \cite{DLY2}), it can be inferred that the conditions outlined in Theorem \ref{lower3} and \ref{lower4} imply the conditions in Theorem \ref{Thm3_lower}. Consequently, the subsequent theorems follow directly.

\begin{theorem}\label{lower3}
For $n = 3$ and $0<\varepsilon\ll1$  , there exist two smooth strictly convex inclusions $D_1, D_2$ inside $D = B_5$ with $D^2(f-g)(0')= M$ for any positive definite matrix $M$, and a boundary data $\varphi \in C^\infty(\partial{D})$ with $\| \varphi \|_{L^\infty(\partial{D})} = 1$, such that the solution $u \in H^1(\widetilde\Omega)$ of \eqref{equkzero} satisfies
$$
 \|\nabla u \|_{L^{\infty}(\widetilde\Omega)} > \frac{1}{C}\varepsilon^{\frac{\alpha(\lambda_1)-1}{2}},
$$
where $\lambda_1$ and $\alpha(\lambda_1)$ are given by \eqref{first_eigenvalue} and \eqref{alpha_lambda_1} with $a(\xi)= \xi^t M \xi$, and $C$ depends only on $\|\partial D_1\|_{C^4}$, $\|\partial D_2\|_{C^4}$, $\|\ln a(\xi)\|_{L^{\infty}(\bS^{n-2})}$, $\alpha(\lambda_1)$ and $n$.
\end{theorem}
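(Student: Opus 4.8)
The plan is to reduce Theorem~\ref{lower3} to Theorem~\ref{Thm3_lower} by exhibiting, in dimension $n=3$, a concrete geometric configuration satisfying all the symmetry hypotheses there, and then to verify that the eigenvalue problem \eqref{SL_problem} has the required odd eigenfunction in the first eigenspace. First I would construct the inclusions: given an arbitrary positive definite $2\times 2$ matrix $M$, I would choose $D_1$ and $D_2$ to be (for instance) slightly perturbed ellipsoids inside $D=B_5$, positioned so that the origin is the closest point between $\partial D_1$ and $\partial D_2$ along the $x_3$-axis, and so that the graphs $f,g$ near the origin satisfy $D^2(f-g)(0') = M$. The crucial point is that $M$ can be taken \emph{diagonal} after an orthogonal change of the $x'$-coordinates (which preserves the class of configurations), and that a diagonal Hessian produces inclusions which are symmetric in each $x_j$, $j=1,2$; this is exactly the symmetry required by Theorem~\ref{Thm3_lower}. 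The boundary datum $\varphi=x_j$ (for the appropriate $j$) is $C^\infty$ on $\partial D$ with $\|\varphi\|_{L^\infty(\partial D)}\le 5$; rescaling $D$ to have diameter normalized, or simply absorbing the factor $5$ into the constant $C$, gives $\|\varphi\|_{L^\infty}=1$ as stated.

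Next I would verify the eigenspace condition. With $n=3$ the sphere $\bS^{n-2}=\bS^1$ is a circle, parametrized by $\theta$, and with $a(\xi)=\xi^tM\xi$ the Sturm--Liouville problem \eqref{SL_problem} becomes a one-dimensional periodic problem
\begin{equation*}
-\frac{d}{d\theta}\Bigl(a(\theta)\,\frac{d}{d\theta}u(\theta)\Bigr)=\lambda\,a(\theta)\,u(\theta),\qquad u(\theta+2\pi)=u(\theta),
\end{equation*}
where $a(\theta)=M_{11}\cos^2\theta+M_{22}\sin^2\theta$ (after diagonalizing $M$), an even, $\pi$-periodic function. Classical Sturm--Liouville theory on the circle gives a discrete spectrum $0=\lambda_0<\lambda_1\le\lambda_2<\lambda_3\le\cdots$; the key observation is that because $a(\theta)$ is even in $\theta$, the eigenspaces split into even and odd parts, and the first nonzero eigenvalue has an odd eigenfunction in its eigenspace. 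Indeed, the Rayleigh quotient \eqref{first_eigenvalue} restricted to odd functions is minimized by a nontrivial odd eigenfunction, and one checks (using the nodal/oscillation count, or directly that on the invariant subspace of odd $\pi$-antiperiodic-type functions the quotient cannot exceed that of even competitors) that this odd minimizer attains $\lambda_1$. This is the content of Theorem~5.2 and Corollary~5.7 of \cite{DLY2} cited just above, so I would invoke those results to conclude that the $\lambda_1$-eigenspace contains a function odd in $x_j$ for some $j\in\{1,2\}$; here I would be careful to match $j$ to the index for which $\varphi=x_j$ is chosen, which is precisely the freedom allowed in Theorem~\ref{Thm3_lower}.

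Having established both the geometric construction and the eigenspace property, the conclusion is immediate: the configuration $(D_1,D_2,\varphi)$ satisfies every hypothesis of Theorem~\ref{Thm3_lower}, so \eqref{eqThm3_lower} applies and yields
\begin{equation*}
\|\nabla u\|_{L^\infty(\widetilde\Omega)}\ge\frac{1}{C}\,\varepsilon^{\frac{\alpha(\lambda_1)-1}{2}},
\end{equation*}
with $C$ depending only on $\|\partial D_1\|_{C^4}$, $\|\partial D_2\|_{C^4}$, $\|\ln a(\xi)\|_{L^\infty(\bS^{n-2})}$, $\alpha(\lambda_1)$ and $n$, and $\lambda_1$, $\alpha(\lambda_1)$ computed from $a(\xi)=\xi^tM\xi$ exactly as asserted. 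The main obstacle I anticipate is not analytic but constructive: one must produce, for \emph{every} prescribed positive definite $M$, a genuinely strictly convex pair of smooth inclusions inside the fixed ball $B_5$ realizing $D^2(f-g)(0')=M$ while keeping the required mirror symmetries and the uniform $C^4$ bounds needed for the constant $C$; the subtlety is that large or highly anisotropic $M$ forces the inclusions to be very flat near the origin, and one must check this can still be done with bounded curvature elsewhere and without the inclusions touching each other or $\partial B_5$. This is handled by an explicit family of inclusions (e.g.\ level sets of $x_3\mp(\varepsilon/2 + \tfrac12 x'^tMx' + \text{higher order smoothing})$ capped off smoothly and convexly away from the origin), and verifying strict convexity and the symmetry is a routine but necessary computation that I would carry out in detail in the body of the proof.
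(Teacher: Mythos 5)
Your proposal follows the same route as the paper: both derive Theorem~\ref{lower3} as a corollary of Theorem~\ref{Thm3_lower}, invoking Theorem~5.2 and Corollary~5.7 of \cite{DLY2} to verify that for $n=3$ the first nonzero eigenspace of \eqref{SL_problem} contains an eigenfunction odd in some $x_j$, so that symmetric inclusions with prescribed $D^2(f-g)(0')=M$ and boundary datum $\varphi=x_j$ (normalized) satisfy all the hypotheses of Theorem~\ref{Thm3_lower}. The paper states this reduction in one sentence; you fill in the construction of the symmetric inclusions and the one-dimensional Sturm--Liouville picture on $\bS^1$, which is correct supplementary detail consistent with the paper's argument.
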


\begin{theorem}\label{lower4}
For $n \ge 4$, there exists an $\varepsilon_0 = \varepsilon_0(n) \in (0 ,1/2)$ such that for any positive definite matrix $M$ satisfying
$$
(1-\varepsilon_0) \frac{I}{\|I\|} \le \frac{M}{\|M\|} \le (1+\varepsilon_0) \frac{I}{\|I\|},
$$
there exist two smooth strictly convex inclusions $D_1, D_2$ inside $D = B_5$ with $D^2(f-g)(0') = M$, and a boundary data $\varphi \in C^\infty(\partial{D})$ with $\| \varphi \|_{L^\infty(\partial{D})} = 1$, such that the solution $u \in H^1(\widetilde\Omega)$ of \eqref{equkzero} satisfies
$$
 \|\nabla u \|_{L^{\infty}(\widetilde\Omega)} > \frac{1}{C}\varepsilon^{\frac{\alpha(\lambda_1)-1}{2}},
$$
for $0<\varepsilon\ll1$, where $\lambda_1$ and $\alpha(\lambda_1)$ are given by \eqref{first_eigenvalue} and \eqref{alpha_lambda_1} with $a(\xi)= \xi^t M \xi$, and $C$ is a positive constant depending only on $\|\partial D_1\|_{C^4}$, $\|\partial D_2\|_{C^4}$, $\|\ln a(\xi)\|_{L^{\infty}(\bS^{n-2})}$, $\alpha(\lambda_1)$ and $n$.
\end{theorem}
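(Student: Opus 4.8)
The plan is to derive Theorem~\ref{lower4} from Theorem~\ref{Thm3_lower}: I will pick $\varepsilon_0=\varepsilon_0(n)$ so small that every admissible $M$ can be realized by symmetric, strictly convex inclusions whose associated spectral problem \eqref{SL_problem} satisfies the eigenspace hypothesis of Theorem~\ref{Thm3_lower}. As a preliminary reduction, observe that the first nonzero eigenvalue $\lambda_1$ in \eqref{first_eigenvalue} together with its eigenspace, and the exponent $\alpha(\lambda_1)$ in \eqref{alpha_lambda_1}, are unchanged if $a(\xi)$ is replaced by any positive multiple of itself, and are invariant under an orthogonal change of the $x'$--variables (which also preserves $B_5$ and the operator in \eqref{equkzero}); moreover the constraint $(1-\varepsilon_0)\tfrac{I}{\|I\|}\le\tfrac{M}{\|M\|}\le(1+\varepsilon_0)\tfrac{I}{\|I\|}$ is preserved under $M\mapsto P^{t}MP$ for orthogonal $P$. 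Hence it suffices to treat $M=\operatorname{diag}(\mu_1,\dots,\mu_{n-1})$ with all $\mu_i$ within a factor $\tfrac{1+\varepsilon_0}{1-\varepsilon_0}$ of one another, in which case, by \eqref{a_assumption}, the weight $a(\xi)$ is a positive multiple of $\xi^{t}M\xi=\sum_i\mu_i\xi_i^2$.

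Next I would construct the geometry. Take even $C^\infty$ functions, for example $f(x')=\tfrac14\sum_i\mu_ix_i^2$ plus strictly convexifying higher--order terms and $g(x')=-\tfrac14\sum_i\mu_ix_i^2$ plus strictly concavifying higher--order terms, so that $f,g$ satisfy \eqref{fg_0}--\eqref{fg_1} and $D^2(f-g)(0')=M$, and so that the graphs $\Gamma_+=\{x_n=\varepsilon+f(x')\}$, $\Gamma_-=\{x_n=g(x')\}$ over $\{|x'|<2R_0\}$ extend to the boundaries of smooth, bounded, strictly convex domains $D_1,D_2\subset B_5$ that are symmetric under each reflection $x_j\mapsto-x_j$ ($1\le j\le n-1$), with $\dist(D_1,D_2)=\varepsilon$ attained along the $x_n$--axis. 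This is a routine construction, and because the $\mu_i$ are mutually comparable the quantities $\|\partial D_1\|_{C^4}$, $\|\partial D_2\|_{C^4}$, $\|\ln a(\xi)\|_{L^\infty(\bS^{n-2})}$ are controlled by $n$ and $\varepsilon_0$ alone.

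The main point is the eigenspace hypothesis, and this is where the near--identity assumption is used. At $M=I$, \eqref{SL_problem} is the Laplace--Beltrami eigenvalue problem on $\bS^{n-2}$; its first nonzero eigenvalue is $n-2$ with eigenspace $\operatorname{span}\{\xi_1,\dots,\xi_{n-1}\}$ (the degree--one spherical harmonics), separated from the next eigenvalue $2(n-1)$ by a gap of size $n$. For diagonal $M$ the operator in \eqref{SL_problem} commutes with each reflection $\xi_j\mapsto-\xi_j$, so the spectrum splits over the joint parity sectors: in the sector of functions odd in $\xi_j$ and even in the remaining variables, the lowest eigenvalue $\lambda^{(j)}(M)$ lies near $n-2$ and carries the eigenfunction that deforms from $\xi_j$, while every other parity sector is first populated by harmonics of degree at least $2$ and hence has its lowest eigenvalue near $2(n-1)$ or larger. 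Since the Rayleigh quotient in \eqref{first_eigenvalue}, and more generally each eigenvalue of the restriction of \eqref{SL_problem} to a fixed parity sector, depends continuously on the weight $a$ (uniformly bounded and bounded below here), a small enough $\varepsilon_0=\varepsilon_0(n)$ keeps every perturbed eigenvalue within, say, $n/3$ of its value at $M=I$; consequently $\lambda_1(M)=\min_{1\le j\le n-1}\lambda^{(j)}(M)$, and the $\lambda_1(M)$--eigenspace contains the eigenfunction of a sector attaining this minimum, which is odd in the corresponding variable $x_j$. This perturbative statement is exactly what Theorem~5.2 and Corollary~5.7 of \cite{DLY2} supply.

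With that index $j$ fixed, put $\varphi=x_j/\|x_j\|_{L^\infty(\partial B_5)}$, so that $\|\varphi\|_{L^\infty(\partial D)}=1$ and the corresponding solution equals $\|x_j\|_{L^\infty(\partial B_5)}^{-1}$ times the solution with datum $x_j$. Then $(D_1,D_2,\varphi)$ satisfies all the hypotheses of Theorem~\ref{Thm3_lower}, so \eqref{eqThm3_lower} gives $\|\nabla u\|_{L^\infty(\widetilde\Omega)}\ge\tfrac1C\varepsilon^{(\alpha(\lambda_1)-1)/2}>\tfrac1{2C}\varepsilon^{(\alpha(\lambda_1)-1)/2}$, which is the asserted bound after renaming the constant. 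I expect the only genuinely delicate step to be the perturbation analysis in the third paragraph---ruling out that $\lambda_1(M)$ escapes the degree--one cluster and checking that the minimizing parity sector is always one odd in a single coordinate---but since this has already been carried out in \cite{DLY2}, the remaining work is the bookkeeping of the reduction and the routine construction of the inclusions.
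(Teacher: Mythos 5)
Your proposal is correct and takes essentially the same route as the paper: the paper proves Theorem~\ref{lower4} by observing (with reference to Theorem~5.2 and Corollary~5.7 of \cite{DLY2}) that the near-identity condition on $M$ guarantees the eigenspace hypothesis of Theorem~\ref{Thm3_lower}, and then applying that theorem. You have merely spelled out the reduction to diagonal $M$, the construction of symmetric strictly convex inclusions with $D^2(f-g)(0')=M$, and the normalization of $\varphi$, all of which the paper leaves implicit in its one-sentence deduction.
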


In the above, $\|M\|$ and $\|I\|$ denote the standard norm of the matrices. 

The rest of this paper is organized as follows. In Section 2, we establish some estimates for the degenerate elliptic operator associated with the problem, which are crucial in the proof of Theorems \ref{Thm1_upper} and \ref{Thm3_lower}. Theorems \ref{Thm1_upper} and \ref{Thm3_lower} are showed in Sections \ref{sec3} and \ref{sec4}, respectively.
 
\section{Estimates on degenerate elliptic operators}

In this section, we establish some estimates for elliptic equations with degenerate coefficients, which are necessary for proving Theorem  \ref{Thm1_upper} and \ref{Thm3_lower}. Throughout this section,  we work in the domain $B_R\subset \bR^{n-1}$ for some $R>0$ and $n\geq 3.$

Let $a(\xi)$ be defined by \eqref{a_assumption}. We introduce the following norms: 
$$
\|F\|_{\varepsilon,\sigma,s, B_R}:=\sup_{x'\in B_R}\frac{|F(x')|}{|x'|^{\sigma}\Big(\varepsilon+a(\xi)|x'|^2\Big)^{1-s}},\quad\mbox{for}\,\,\sigma,s\in \mathbb{R},
$$
and
$$
\|u\|'_{C^1(D)}:=\|u\|_{L^{\infty}(D)}+d\|\nabla u\|_{L^{\infty}(D)},~
\|u\|'_{C^{1,\mu}(D)}:=\|u\|'_{C^1(D)}+d^{1+\mu}[\nabla u]_{C^{\mu}(D)},
$$
where $d=\underset{x,y\in D}\sup|x-y|$, $D\subset\bR^{n-1}$ or $\bR^{n}$. For any bounded set $\Omega\subset\bR^{n-1}$, we denote $H^{1}(\Omega,|x'|^2dx')$ as the weighted $H^1$ norms given by:
$$\|f\|_{H^1(\Omega,|x'|^2dx')}:=\Big(\int_{\Omega}|f|^2|x'|^2dx'\Big)^{\frac{1}{2}}+\Big(\int_{\Omega}|\nabla f|^2|x'|^2dx'\Big)^{\frac{1}{2}}.$$
For $\rho>0$, define the average
$$(u)_{\partial B_{\rho}}^a:=\Big(\int_{\partial B_{\rho}}a(\xi)d\sigma\Big)^{-1}\int_{\partial B_{\rho}}a(\xi)u(x')d\sigma.$$

By using harmonic decomposition , we can get following lemma from \cite{DLY2}.
 
\begin{lemma}[\cite{DLY2},Lemma 2.2]\label{lemma_v1_gen}
For $n \ge 3$, let $a(\xi)$ satisfy \eqref{a_assumption}, $\lambda_1$ and $\alpha(\lambda_1)$ be given by \eqref{first_eigenvalue} and \eqref{alpha_lambda_1}, and $v_1 \in H^1(B_{R_0}, |x'|^2 dx')$ satisfy
\begin{equation*}
\label{equation_v1_gen}
\dv\Big(a(\xi)|x'|^2\nabla v_1\Big)=0\quad\text{in}\,\,B_{R_0} \subset \bR^{n-1}.
\end{equation*}
Then $v_1 \in C^\beta(B_{R_0})$, for some $\beta > 0$ depending only on $n$ and $\| \ln a(\xi)\|_{L^\infty(\bS^{n-2})}$. Moreover, for any $0 < \rho < R \le R_0$, we have $v_1(0') = (v_1)_{\partial B_\rho}^a$, and
\begin{align*}
&\Big( \fint_{\partial B_\rho} a(\xi)|v_1(x') - v_1(0')|^2 \, d\sigma \Big)^{1/2}\le \Big(\frac{\rho}{R} \Big)^{\alpha(\lambda_1)} \Big( \fint_{\partial B_R} a(\xi) |v_1(x') - v_1(0')|^2 \, d\sigma \Big)^{1/2}.
\end{align*}
\end{lemma}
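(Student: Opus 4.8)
The plan is to prove the lemma by \emph{harmonic decomposition} adapted to the weight $a(\xi)|x'|^2$, i.e.\ by separating variables in polar coordinates $x'=r\xi$, $r=|x'|$, $\xi\in\bS^{n-2}$. Let $\{\phi_k\}_{k\ge0}$ be the eigenfunctions of \eqref{SL_problem}, normalized to be orthonormal for $\langle\cdot,\cdot\rangle_{\bS^{n-2}}$, with eigenvalues $0=\lambda_0<\lambda_1\le\lambda_2\le\cdots$ (that $\lambda_0=0$ is simple with constant eigenfunction follows by multiplying \eqref{SL_problem} by $u$ and integrating). Since $\ln a\in L^\infty(\bS^{n-2})$, the weighted and unweighted $L^2(\bS^{n-2})$ norms are equivalent, so $\{\phi_k\}$ is a basis of $L^2(\bS^{n-2})$ and for each fixed $r\in(0,R_0)$ one may write
\[
v_1(r\xi)=\sum_{k\ge0}c_k(r)\,\phi_k(\xi),\qquad c_k(r):=\langle v_1(r\cdot),\phi_k\rangle_{\bS^{n-2}},
\]
the sum converging in $L^2(\bS^{n-2})$. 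A computation in polar coordinates gives
\[
\dv\big(a(\xi)|x'|^2\nabla v_1\big)=a(\xi)\,r^2\partial_{rr}v_1+n\,a(\xi)\,r\,\partial_r v_1+\dv_{\bS^{n-2}}\big(a(\xi)\nabla_{\bS^{n-2}}v_1\big),
\]
so testing the equation with $\psi(r)\phi_k(\xi)$, $\psi\in C_c^\infty((0,R_0))$, and using $\dv_{\bS^{n-2}}(a\nabla_{\bS^{n-2}}\phi_k)=-\lambda_k a\phi_k$, one finds that each $c_k$ solves the Euler equation $r^2c_k''+n\,r c_k'-\lambda_k c_k=0$ weakly, hence classically, on $(0,R_0)$.

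Next I would solve these ODEs and use $v_1\in H^1(B_{R_0},|x'|^2dx')$ to discard the singular solutions. The indicial roots of $\mu^2+(n-1)\mu-\lambda_k=0$ are $\alpha(\lambda_k)>0$ and $-(n-1)-\alpha(\lambda_k)<0$, so $c_k(r)=b_k r^{\alpha(\lambda_k)}+b_k'r^{-(n-1)-\alpha(\lambda_k)}$ for $k\ge1$ and $c_0(r)=b_0+b_0'r^{-(n-1)}$. A term $r^\mu\phi_k$ contributes a positive multiple of $\int_0^{R_0}r^{2\mu}r^{n-2}\,dr$ to $\int_{B_{R_0}}|\nabla v_1|^2|x'|^2dx'$ (the weight $|x'|^2$ exactly absorbs the $r^{-2}$ from one differentiation), which is finite only for $\mu>-(n-1)/2$; the negative indicial roots fail this, so $b_k'=0$ for every $k\ge0$. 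Hence
\[
v_1(r\xi)=v_1(0')+\sum_{k\ge1}b_k\,r^{\alpha(\lambda_k)}\phi_k(\xi),\qquad v_1(0'):=b_0\phi_0\ \text{(a constant)}.
\]
Then $v_1(0')=(v_1)^a_{\partial B_\rho}$ is immediate, because $\langle\phi_k,1\rangle_{\bS^{n-2}}=0$ for $k\ge1$ kills every nonconstant term in the $a$-weighted spherical average. For the oscillation estimate, Parseval gives $\fint_{\partial B_\rho}a(\xi)|v_1-v_1(0')|^2\,d\sigma=\sum_{k\ge1}b_k^2\,\rho^{2\alpha(\lambda_k)}$, and since $\alpha(\cdot)$ is increasing we have $\alpha(\lambda_k)\ge\alpha(\lambda_1)$, so for $0<\rho\le R\le R_0$,
\[
\rho^{2\alpha(\lambda_k)}=\Big(\tfrac{\rho}{R}\Big)^{2\alpha(\lambda_1)}R^{2\alpha(\lambda_1)}\rho^{2(\alpha(\lambda_k)-\alpha(\lambda_1))}\le\Big(\tfrac{\rho}{R}\Big)^{2\alpha(\lambda_1)}R^{2\alpha(\lambda_k)};
\]
summing over $k\ge1$ and taking square roots yields exactly the claimed decay.

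For $v_1\in C^\beta(B_{R_0})$, away from the origin the equation is uniformly elliptic with ellipticity ratio controlled by $e^{2\|\ln a\|_{L^\infty(\bS^{n-2})}}$, and it is invariant under the scaling $v_1(\cdot)\mapsto v_1(t\,\cdot)$; De Giorgi--Nash--Moser on the dyadic annuli $\{2^{-j-1}R_0\le|x'|\le 2^{-j}R_0\}$, with estimates uniform in $j$, gives interior $C^{\beta_0}$ bounds, $\beta_0=\beta_0(n,\|\ln a\|_{L^\infty})$. Near the origin, the $L^2$-oscillation decay above is a Campanato-type bound that, together with local boundedness of $v_1$, upgrades to pointwise Hölder continuity at $0'$ with exponent $\alpha(\lambda_1)$; and $\alpha(\lambda_1)$ is bounded below in terms of $n$ and $\|\ln a\|_{L^\infty}$, since Courant--Fischer together with $e^{-\|\ln a\|_{L^\infty}}\le a\le e^{\|\ln a\|_{L^\infty}}$ gives $\lambda_1\ge e^{-2\|\ln a\|_{L^\infty}}(n-2)$. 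One then takes $\beta=\min\{\beta_0,\alpha(\lambda_1)\}$. Equivalently, the regularity may be read off directly from the series, using De Giorgi estimates for the $\phi_k$ on $\bS^{n-2}$ and Weyl's law $\lambda_k\sim k^{2/(n-2)}$ to see that $\sum_k|b_k|\,\|\phi_k\|_{C^{\beta_0}}\rho^{\alpha(\lambda_k)}$ converges uniformly for $\rho<R_0$.

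The step I expect to be the main obstacle is making rigorous the two places where $r\to0$ is delicate: showing, inside $H^1(B_{R_0},|x'|^2dx')$, that the angular coefficients $c_k$ are regular enough for the Euler ODEs to be meaningful and that the singular indicial solutions are genuinely excluded (not merely borderline by a formal power count), and controlling $v_1$ near the origin in the Hölder norm. Here the weight $|x'|^2a(\xi)$ is Muckenhoupt $A_2$ only for $n\ge4$ and fails to be so exactly when $n=3$, so the standard degenerate De Giorgi--Nash--Moser theory is unavailable in that dimension and the case $n=3$ must be handled separately, e.g.\ through the explicit one-dimensional eigenvalue problem on $\bS^1$ or directly via the series.
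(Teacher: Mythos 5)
Your proof is correct and takes essentially the same route the paper credits to \cite{DLY2}: a harmonic decomposition in the $a$-weighted spherical eigenbasis of \eqref{SL_problem}, Euler ODEs in $r$ for the coefficients $c_k$, exclusion of the singular modes by the weighted $H^1(B_{R_0},|x'|^2dx')$ hypothesis, and Parseval plus the monotonicity of $\alpha(\cdot)$ for the decay estimate and the identity $v_1(0')=(v_1)^a_{\partial B_\rho}$. Your closing worry about $n=3$ is unfounded: the $A_2$ failure of $|x'|^2$ in $\bR^{2}$ only obstructs the global degenerate Fabes--Kenig--Serapioni machinery, which you never invoke; your own dyadic-annuli argument (unweighted De Giorgi--Nash--Moser on rescaled annuli away from $0'$, together with the Campanato-type $L^2$-oscillation decay at $0'$) applies verbatim for every $n\geq3$, and the exclusion of the singular indicial exponent is clean rather than borderline, since $-(n-1)-\alpha(\lambda_k)\le -(n-1)<-(n-1)/2$ with a uniform gap for all $k\ge0$.
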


\begin{lemma}\label{corov1}
For $n \ge 3$, let $a(\xi)$ satisfy \eqref{a_assumption}, $\lambda_1$ and $\alpha(\lambda_1)$ be given by \eqref{first_eigenvalue} and \eqref{alpha_lambda_1}, and $v_1 \in H^1(B_{R}, |x'|^2 dx')$ satisfy
\begin{equation*}\label{equation_v11_gen}
\left\{
\begin{aligned}
\dv\Big(a(\xi)|x'|^2\nabla v_1\Big)&=0\,\,\quad\text{in}\,\,B_{R} \subset \bR^{n-1}, \\
v_1&=V_1\quad\text{on}\,\,\partial{B}_{R},
\end{aligned}
\right.
\end{equation*}
where $V_1\in C^{1,\mu}(\bar B_{R}\backslash B_{\frac{3}{4}R})$ for some $\mu\in(0,1)$. Then for any $x'\in \bar B_{R}\backslash \{0'\}$, 
$$|v_1(x')-v_1(0')|+|x'||\nabla v_1(x')|\leq C\Big(\frac{|x'|}{R}\Big)^{\alpha(\lambda_1)}\|V_1-(V_1)_{\partial B_{R}}^a\|'_{C^{1,\mu}(B_{R}\backslash B_{\frac{3}{4}R})},$$
where $C$ depends only on $n$, $\|\ln a\|_{L^{\infty}}$ and $\mu$.
\end{lemma}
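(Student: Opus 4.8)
The plan is to derive the pointwise estimate in Lemma~\ref{corov1} from the oscillation decay of Lemma~\ref{lemma_v1_gen} combined with interior gradient estimates for the degenerate operator $\dv(a(\xi)|x'|^2\nabla\,\cdot\,)$ on dyadic annuli. First I would reduce to the case $v_1(0')=0$ and $(V_1)_{\partial B_R}^a=0$ by subtracting the constant $v_1(0')$, which by Lemma~\ref{lemma_v1_gen} equals $(v_1)_{\partial B_\rho}^a$ for every $\rho\le R$; note this does not change $\nabla v_1$. Then the boundary data becomes $V_1-(V_1)_{\partial B_R}^a$, whose $\|\cdot\|'_{C^{1,\mu}}$ norm on the shell $B_R\setminus B_{3R/4}$ controls in particular $\|v_1\|_{L^\infty(\partial B_R)}$ up to a dimensional constant. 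A global maximum-principle (or energy) bound gives $\|v_1\|_{L^\infty(B_R)}\le C\|V_1-(V_1)_{\partial B_R}^a\|'_{C^{1,\mu}}$, and feeding this into the $L^2$-on-spheres decay of Lemma~\ref{lemma_v1_gen} yields
\begin{equation*}
\Big(\fint_{\partial B_\rho}a(\xi)|v_1|^2\,d\sigma\Big)^{1/2}\le C\Big(\frac{\rho}{R}\Big)^{\alpha(\lambda_1)}\|V_1-(V_1)_{\partial B_R}^a\|'_{C^{1,\mu}(B_R\setminus B_{3R/4})}
\end{equation*}
for all $0<\rho\le R$. Since $\ln a\in L^\infty(\bS^{n-2})$, the weight $a(\xi)$ is bounded above and below, so this is equivalent to the same decay for the unweighted spherical $L^2$ average, hence (by averaging in $\rho$ over a dyadic range) for the solid $L^2$ average on annuli $B_{2\rho}\setminus B_{\rho/2}$.

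Next I would convert the $L^2$ oscillation decay into the stated pointwise bound on $|v_1(x')|$ and $|x'||\nabla v_1(x')|$. Fix $x'\ne 0'$ and set $\rho=|x'|$. On the annulus $A_\rho:=B_{2\rho}\setminus B_{\rho/2}$ the coefficient $a(\xi)|x'|^2$ is comparable to the constant $\rho^2$ (uniformly, using $\ln a\in L^\infty$), so after the rescaling $y'=x'/\rho$ the function $w(y'):=v_1(\rho y')$ solves a uniformly elliptic equation $\dv(\tilde a(y')\nabla w)=0$ on the fixed annulus $B_2\setminus B_{1/2}\subset\bR^{n-1}$ with measurable coefficients bounded between two positive constants depending only on $\|\ln a\|_{L^\infty}$. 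By De~Giorgi--Nash--Moser interior estimates, $\|w\|_{L^\infty(B_{3/2}\setminus B_{2/3})}\le C\|w\|_{L^2(B_2\setminus B_{1/2})}$; undoing the scaling gives $|v_1(x')|\le C(\fint_{A_\rho}|v_1|^2)^{1/2}$, which by the previous paragraph is $\le C(\rho/R)^{\alpha(\lambda_1)}\|V_1-(V_1)^a_{\partial B_R}\|'_{C^{1,\mu}}$. For the gradient I would instead invoke interior $C^{1,\mu'}$ (Schauder-type) estimates — this is where the hypothesis $a\in C^{2,\gamma}$ near the relevant scales, coming from \eqref{fg_2}, is used, so that the rescaled coefficient $\tilde a$ is Hölder continuous with norm independent of $\rho$; then $\rho|\nabla v_1(x')|=|\nabla w(y')|\le C\|w\|_{L^\infty(B_2\setminus B_{1/2})}\le C(\rho/R)^{\alpha(\lambda_1)}\|V_1-(V_1)^a_{\partial B_R}\|'_{C^{1,\mu}}$. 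One subtlety near $\rho$ close to $R$: the annulus $A_\rho$ may stick out of $B_R$, so there I would use the boundary version of the interior estimates together with the $C^{1,\mu}$ control of the boundary data $V_1$ on $B_R\setminus B_{3R/4}$; this is precisely why the hypothesis is phrased with a $C^{1,\mu}$ norm on that outer shell rather than merely an $L^\infty$ bound.

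Assembling the two pointwise bounds gives $|v_1(x')-v_1(0')|+|x'||\nabla v_1(x')|\le C(|x'|/R)^{\alpha(\lambda_1)}\|V_1-(V_1)_{\partial B_R}^a\|'_{C^{1,\mu}(B_R\setminus B_{3R/4})}$, with $C=C(n,\|\ln a\|_{L^\infty},\mu)$, as claimed. The main obstacle I anticipate is handling the degeneracy at the origin cleanly: the operator is genuinely degenerate on the axis $x'=0'$, so one cannot apply standard elliptic theory globally on $B_R$, only on annuli away from $0'$ after rescaling — and one must check that the constants from the dyadic De~Giorgi--Nash--Moser/Schauder estimates do not accumulate as $\rho\to 0$, which works exactly because on each annulus $A_\rho$ the coefficient is comparable to $\rho^2$ with ratio controlled solely by $\|\ln a\|_{L^\infty}$, independent of $\rho$. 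A secondary technical point is the interplay between the weighted spherical average $(V_1)^a_{\partial B_R}$ and the unweighted quantities appearing in the interior estimates, but this is absorbed by the two-sided bound on $a(\xi)$; and one should also verify that subtracting $v_1(0')$ does not inflate the right-hand side, which is immediate since $v_1(0')=(v_1)^a_{\partial B_R}$ and the map $V_1\mapsto V_1-(V_1)^a_{\partial B_R}$ is a bounded projection in the $\|\cdot\|'_{C^{1,\mu}}$ norm on the shell.
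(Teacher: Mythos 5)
Your proposal follows essentially the same route as the paper's proof: use the $L^2$-on-spheres decay of Lemma~\ref{lemma_v1_gen}, rescale a thin annulus around $|x'|$ to unit scale where the operator $\dv(a(\xi)|x'|^2\nabla\cdot)$ becomes uniformly elliptic with constants controlled by $\|\ln a\|_{L^\infty}$, apply interior $C^1$ estimates, and treat the near-boundary shell $|x'|>\tfrac{7}{8}R$ separately using the $C^{1,\mu}$ control of the boundary data. One small misattribution: the Hölder regularity of the rescaled coefficient does not come from \eqref{fg_2} or from $f,g\in C^{2,\gamma}$ — the coefficient $a(\xi)|x'|^2=\sum_{i,j}a^{ij}x_ix_j$ is by the definition \eqref{a_assumption} a fixed quadratic form, hence smooth, and remains so after rescaling $y'=x'/\rho$; so the Schauder step is even more elementary than you suggest, and this explains why the paper's stated constant depends only on $n$, $\|\ln a\|_{L^\infty}$, and $\mu$, with no reference to $\gamma$.
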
 

\begin{proof}
For $|x_0'|\in(0,\frac{7}{8}R],$ we take a change of variables by defining $y'=\frac{x'}{|x_0'|}$ to map the region $B_{|x_0'|+\frac{1}{8}|x_0'|}\backslash B_{|x_0'|-\frac{1}{8} |x_0'|}$ to $B_{1+\frac{1}{8} }\backslash B_{1-\frac{1}{8} }$. Let $\tilde v_1(y')=v_1(|x_0'|y')=v_1(x')$. Then $\tilde v_1(y')$ satisfies the equation
$$\dv(a(\xi)|y'|^2\nabla \tilde v_1)=0,\quad y'\in B_{1+\frac{1}{8}}\backslash B_{1-\frac{1}{8}}.$$
By applying the standard elliptic theory and Lemma \ref{lemma_v1_gen}, we obtain
\begin{align*}
&\|\tilde v_1-\tilde v_1(0')\|_{C^1(B_{1+\frac{1}{16}}\backslash B_{1-\frac{1}{16}})}\leq\, C \|\tilde v_1-\tilde v_1(0')\|_{L^2(B_{1+\frac{1}{8}}\backslash B_{1-\frac{1}{8}})}\nonumber\\
\leq&\, 	C \Big(\fint_{B_{|x_0'|+\frac{1}{8}|x_0'|}\backslash B_{|x_0'|-\frac{1}{8}|x_0'|}}|v_1(x')-v_1(0')|^2dx'\Big)^{1/2}\nonumber\\
\leq&\,C \Big(\frac{|x_0'|}{R}\Big)^{\alpha(\lambda_1)}\Big(\fint_{\partial B_{R}}|v_1(x')-v_1(0')|^2dx'\Big)^{1/2}\nonumber\\
\leq&\, C\Big(\frac{|x_0'|}{R}\Big)^{\alpha(\lambda_1)}\|V_1-(V_1)_{\partial B_{R}}^a\|'_{C^{1,\mu}(B_{R}\backslash B_{\frac{3}{4}R})}.
\end{align*}
By rescaling, we have, for $|x_0'|\in(0,\frac{7}{8}R]$, 
\begin{equation}\label{estv11}
|v_1(x_0')-v_1(0')|+|x_0'||\nabla v_1(x_0')|\leq C\Big(\frac{ |x_0'|}{R}\Big)^{\alpha(\lambda_1)}\|V_1-(V_1)_{\partial B_{R}}^a\|'_{C^{1,\mu}(B_{R}\backslash B_{\frac{3}{4}R})}.
\end{equation}

For $|x_0'|\in (\frac{7}{8}R,R]$,  we choose a change of variables $y'=\frac{x'}{R}$ to map the domain $B_{R}\backslash B_{\frac{3}{4}R}$ to $B_{1 }\backslash B_{ \frac{3}{4} }$. Set $\tilde v_1(y')=v_1(Ry')=v_1(x')$ and $\tilde V_1(y')=V_1(Ry')=V_1(x')$. Then $\tilde v_1(y')$ satisfies
\begin{equation*}
\left\{
\begin{aligned}
\dv(a(\xi)|y'|^2\nabla \tilde v_1)&=0\quad\quad\quad y'\in B_{1}\backslash B_{ \frac{3}{4}},\\
\tilde v_1(y')&= \tilde V_1(y')\qquad  y'\in\partial B_1.
\end{aligned}
\right.
\end{equation*}
By the boundary estimates and Lemma \ref{lemma_v1_gen},
\begin{align*}
\|\tilde v_1-\tilde v_1(0')\|_{C^1(B_{1}\backslash B_{\frac{7}{8}})}\leq&\, C \Big(\|\tilde v_1-\tilde v_1(0')\|_{L^2(B_{1}\backslash B_{3/4})}+\|\tilde V_1- (\tilde V_1)_{\partial B_{1}}^a\|_{C^{1,\mu}(B_1\backslash B_{3/4})}\Big)\nonumber\\  
\leq&\, C\|V_1-(V_1)_{\partial B_{R}}^a\|'_{C^{1,\mu}(B_{R}\backslash B_{\frac{3}{4}R})}.
\end{align*} 
By rescaling, we obtain, for $|x_0'|\in(\frac{7}{8}R,R]$,
\begin{equation}\label{estv12}
|v_1(x_0')-v_1(0')|+R|\nabla v_1(x_0')|\leq C\|V_1-(V_1)_{\partial B_{R}}^a\|'_{C^{1,\mu}(B_{R}\backslash B_{\frac{3}{4}R})}.
\end{equation}
Combining \eqref{estv11} and \eqref{estv12}, the proof is finished.
\end{proof}

 \begin{lemma}\label{maximumv22}
For $n \ge 3$, let $a(\xi)$ satisfy \eqref{a_assumption}, $\lambda_1$ and $\alpha(\lambda_1)$ be given by \eqref{first_eigenvalue} and \eqref{alpha_lambda_1}, and $v_2 \in H^1(B_{R} )$ satisfy
\begin{equation*}
\label{equation_v2_gen}
\left\{
\begin{aligned}
\dv\Big[\big(\varepsilon+a(\xi)|x'|^2\big)\nabla v_2\Big]&=G(x')\quad\text{in}\,\,B_{R} \subset \bR^{n-1}, \\
v_2&=V_2 \quad\quad\text{on}\,\,\partial B_{R}.
\end{aligned}
\right.
\end{equation*}
Then 
\begin{equation*}
\|v_2\|_{L^{\infty}(B_{R})}\leq C \varepsilon^{\frac{\alpha(\lambda_1)}{2}-1}\|G\|_{\varepsilon,0,2-\frac{\alpha(\lambda_1)}{2},R}+\|V_2\|_{L^{\infty}(\partial B_{R})},
\end{equation*}
where $C$ depends only on $n$, $\|\ln a(\xi)\|_{L^{\infty}(\bS^{n-2})}$ and $\alpha(\lambda_1)$, independent of $\varepsilon$ and $R$. 
\end{lemma}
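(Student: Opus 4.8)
The plan is to prove Lemma~\ref{maximumv22} by constructing an explicit barrier function adapted to the degenerate operator $L:=\dv\big[(\varepsilon+a(\xi)|x'|^2)\nabla\cdot\big]$ and then invoking the maximum principle. First I would reduce to the case $V_2\equiv 0$ by subtracting the $L$-harmonic extension of $V_2$ (or, more simply, just subtracting the constant-controlled part and absorbing $\|V_2\|_{L^\infty}$ at the end via the maximum principle for $L$, since $L$ is a genuine — if degenerate — elliptic operator and the comparison principle holds in $H^1$). So it suffices to bound $\|v_2\|_{L^\infty(B_R)}$ when $v_2=0$ on $\partial B_R$ and $|G(x')|\le \|G\|_{\varepsilon,0,2-\alpha(\lambda_1)/2,R}\,(\varepsilon+a(\xi)|x'|^2)^{\alpha(\lambda_1)/2-1}$.

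The core step is to find a radial (in $|x'|$) barrier $w(x')=\psi(|x'|)$ such that $-Lw\ge (\varepsilon+a(\xi)|x'|^2)^{\alpha(\lambda_1)/2-1}$ on $B_R$ and $w\ge 0$ on $\partial B_R$, with $\|w\|_{L^\infty}\lesssim \varepsilon^{\alpha(\lambda_1)/2-1}$. For a radial function, $Lw = (\varepsilon+a(\xi)|x'|^2)\Delta_{x'}w + 2a(\xi)\,x'\cdot\nabla w = (\varepsilon+a(\xi)r^2)\big(\psi''+\tfrac{n-2}{r}\psi'\big)+2a(\xi)r\psi'$. The natural ansatz is $\psi(r)= A - B\,(\varepsilon+c r^2)^{\alpha(\lambda_1)/2}$ for suitable constants; note $(\varepsilon + r^2)^{\alpha(\lambda_1)/2}$ is essentially the profile of the homogeneous solution identified in Lemma~\ref{lemma_v1_gen}/Lemma~\ref{corov1}, since $\alpha(\lambda_1)$ is the root of $\alpha^2+(n-1)\alpha-\lambda_1=0$ and $\lambda_1\le n-2$. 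Differentiating and plugging in, one computes that $-L\big[(\varepsilon+cr^2)^{\alpha(\lambda_1)/2}\big]$ has a lower bound of the order $(\varepsilon + r^2)^{\alpha(\lambda_1)/2-1}$ times a positive constant, using $\alpha(\lambda_1)<1$ (so the exponent $\alpha(\lambda_1)/2-1<-1/2<0$) and the two-sided bound on $a(\xi)$ coming from $\|\ln a(\xi)\|_{L^\infty}$. Choosing $B\sim \varepsilon^{\alpha(\lambda_1)/2-1}$ (so that the forcing term is dominated) and $A$ just large enough that $\psi\ge 0$ at $r=R$, one gets $\|w\|_{L^\infty}\le A \lesssim \varepsilon^{\alpha(\lambda_1)/2-1}\|G\|_{\varepsilon,0,2-\alpha(\lambda_1)/2,R}$ after checking the $R$-dependence cancels (here one uses $\varepsilon\ll 1\le$ fixed scale, or rescales $B_R\to B_1$ first to remove $R$; rescaling is cleaner and I would do that at the outset, tracking how $\|G\|_{\varepsilon,\dots}$ transforms).

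The comparison step is then routine: $\pm v_2 \le \|V_2\|_{L^\infty(\partial B_R)} + w$ on $B_R$, because $L(w\mp v_2)\le -|G|\pm G\le 0$ in $B_R$ and $w\mp v_2\ge -\|V_2\|_{L^\infty}$... wait, more precisely $w \mp v_2 + \|V_2\|_{L^\infty} \ge 0$ on $\partial B_R$ and is $L$-superharmonic, hence $\ge 0$ inside. This requires a weak maximum principle for the degenerate equation with $H^1$ solutions, which follows by testing the equation against $(w\mp v_2+\|V_2\|_{L^\infty})^-$ and using that the weight $\varepsilon+a(\xi)|x'|^2$ is bounded below by $\varepsilon>0$ (so the weighted Dirichlet form is coercive on $H^1_0$ for fixed $\varepsilon$); the constant in the final estimate stays independent of $\varepsilon$ because it comes only from the barrier computation, not from the coercivity.

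The main obstacle I anticipate is the barrier computation itself: one must verify that with the profile $(\varepsilon + a(\xi)|x'|^2)^{\alpha(\lambda_1)/2}$ — or a radial surrogate $(\varepsilon+ c|x'|^2)^{\alpha(\lambda_1)/2}$ with $c$ a lower bound for $a(\xi)$ — the quantity $-Lw$ genuinely dominates $(\varepsilon+a(\xi)|x'|^2)^{\alpha(\lambda_1)/2-1}$ pointwise and not merely up to a sign-indefinite remainder. The delicate point is that $a(\xi)$ is only $L^\infty$, not continuous, so $w$ cannot literally be $(\varepsilon+a(\xi)|x'|^2)^{\alpha(\lambda_1)/2}$ (that is not even $C^1$ across $\{x'=0\}$ in a useful way); instead one works with the radial surrogate and exploits $c_1\le a(\xi)\le c_2$ to trap the forcing term between two radial profiles, at the cost of the constants depending on $\|\ln a(\xi)\|_{L^\infty}$. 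The algebra where $\alpha(\lambda_1)^2+(n-1)\alpha(\lambda_1)=\lambda_1\le n-2$ enters is exactly what guarantees the leading coefficient in $-Lw$ has the right sign; getting this inequality sharp (rather than losing it for $\lambda_1$ close to $n-2$) is where care is needed.
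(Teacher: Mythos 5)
Your plan of building a comparison function and applying the maximum principle is the right one, and the reduction to the homogeneous boundary case plus the $H^1$ weak maximum principle is exactly as the paper does it. However, the specific barrier you propose does not work, and the gap is in the claim that
$-L\big[(\varepsilon+cr^2)^{\alpha(\lambda_1)/2}\big]$
has a lower bound of order $(\varepsilon+r^2)^{\alpha(\lambda_1)/2-1}$. Write $\phi=\varepsilon+a(\xi)|x'|^2$ and compute, for constant $a$ and any exponent $\gamma$,
$$
L_\varepsilon\big[\phi^\gamma\big]=\dv\big[\phi\nabla\phi^\gamma\big]=2\gamma a\,\phi^{\gamma-1}\Big[(n-1)\varepsilon+(n-1+2\gamma)\,a r^2\Big]\sim\phi^\gamma,
$$
not $\phi^{\gamma-1}$: the second-order operator $L_\varepsilon$ applied to $\phi^\gamma$ preserves the power, because the coefficient $\phi$ supplies exactly the extra factor that the two derivatives remove. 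Consequently, with $\gamma=\alpha(\lambda_1)/2>0$ and $B\sim\varepsilon^{\gamma-1}$ you get $-L_\varepsilon\psi\sim\varepsilon^{\gamma-1}\phi^\gamma$, which near the origin is of order $\varepsilon^{2\gamma-1}$, strictly smaller than the forcing $\varepsilon^{\gamma-1}$ (since $\gamma>0$). To salvage the inequality you would need $B\sim\varepsilon^{-1}$, but then $B\sup_{B_R}\phi^\gamma\sim R^{2\gamma}/\varepsilon$, which is far larger than the target bound $\varepsilon^{\alpha(\lambda_1)/2-1}$ once $R\gg\sqrt\varepsilon$. So the positive exponent is fatally wrong.

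The paper instead takes a \emph{small negative} exponent $\beta=\tfrac{\alpha(\lambda_1)-\tilde\alpha}{2}<0$ with $\tilde\alpha\in(\alpha(\lambda_1),1)$, and uses the barrier $\psi=-\phi^\beta$. Then $L_\varepsilon\psi=(-2\beta)\phi^\beta\big[\operatorname{tr}(a)-(-2\beta)\,|a x'|^2/\phi\big]\geq\tfrac{1}{C_0}\phi^\beta$, provided $|\beta|$ is small enough (the quantity $|ax'|^2/\phi$ is uniformly bounded in terms of $\|\ln a\|_{L^\infty}$, so this smallness is intrinsic, not $R$-dependent). The missing idea in your proposal is the factorization of the forcing: $\phi^{\alpha(\lambda_1)/2-1}=\phi^{\beta}\cdot\phi^{\tilde\alpha/2-1}\leq\varepsilon^{\tilde\alpha/2-1}\phi^{\beta}\leq C_0\,\varepsilon^{\tilde\alpha/2-1}\,L_\varepsilon\psi$, using $\tilde\alpha<2$ and $\phi\geq\varepsilon$. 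Since $\|\psi\|_{L^\infty(B_R)}=\varepsilon^{\beta}$ (the negative power is maximal at the origin), the comparison function $C_0\varepsilon^{\tilde\alpha/2-1}\psi$ has sup norm exactly $C_0\varepsilon^{\alpha(\lambda_1)/2-1}$, independent of $R$, which is what you want. Two further misconceptions in your write-up: (i) $a(\xi)|x'|^2=\sum a^{ij}x_ix_j$ is a smooth positive-definite quadratic form, so the non-radial barrier $-\phi^\beta$ is perfectly regular and no radial surrogate is needed; the $\|\ln a\|_{L^\infty}$ dependence enters only through the ellipticity constants in the bracket above. (ii) The relation $\alpha(\lambda_1)^2+(n-1)\alpha(\lambda_1)=\lambda_1\leq n-2$ plays no role in this lemma's barrier computation; one only needs $\alpha(\lambda_1)<1$ to pick $\tilde\alpha\in(\alpha(\lambda_1),1)$.
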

\begin{proof}
Without loss of generality, we assume that $\|G\|_{\varepsilon,0,2-\frac{\alpha(\lambda_1)}{2},R}\leq1$. By the definition, this implies that $|G(x')|\leq \Big(\varepsilon+a(\xi)|x'|^2\Big)^{\frac{\alpha(\lambda_1)}{2}-1}$. For a fixed $\tilde\alpha\in(\alpha(\lambda_1),1)$ satisfying
$$\Big(\tilde\alpha-\alpha(\lambda_1)\Big)\frac{\sum_{i=1}^{n-1}(a^{ij}x_j)^2}{\varepsilon+a(\xi)|x'|^2}\leq\frac{1}{2}\sum_{i=1}^{n-1} a^{ii},\quad\text{for}~ |x'|\leq R,$$
we consider an auxiliary function $\psi(x')=-\Big(\varepsilon+a(\xi)|x'|^2\Big)^{\frac{\alpha(\lambda_1)-\tilde\alpha}{2}}$. A direct calculation yields
 \begin{equation*}
\begin{aligned}
&L_{\varepsilon}\psi(x'):=\dv\Big[\Big(\varepsilon
+a(\xi)|x'|^2\Big)\nabla\psi(x')\Big]\\
=&\,\Big(\tilde\alpha-\alpha(\lambda_1)\Big)\Big(\varepsilon+a(\xi)|x'|^2\Big)^{\frac{\alpha(\lambda_1)-\tilde \alpha}{2}}(\sum_{i=1}^{n-1} a^{ii})\\
&\hspace{1cm}-\Big(\tilde\alpha-\alpha(\lambda_1)\Big)^2\Big(\varepsilon+a(\xi)|x'|^2\Big)^{ \frac{\alpha(\lambda_1)-\tilde \alpha}{2}-1}\sum_{i=1}^{n-1}(a^{ij}x_j)^2 \\
=&\,\Big(\tilde\alpha-\alpha(\lambda_1)\Big)\Big(\varepsilon+a(\xi)|x'|^2\Big)^{\frac{\alpha(\lambda_1)-\tilde \alpha}{2}}\Big((\sum_{i=1}^{n-1}a^{ii})-\Big(\tilde\alpha-\alpha(\lambda_1)\Big)\frac{\sum_{i=1}^{n-1}(a^{ij}x_j)^2}{\varepsilon+a(\xi)|x'|^2}\Big) \\
\geq&\,  \frac{1}{2}(\sum_{i=1}^{n-1}a^{ii})\Big(\tilde\alpha-\alpha(\lambda_1)\Big)\Big(\varepsilon+a(\xi)|x'|^2\Big)^{\frac{\alpha(\lambda_1)-\tilde \alpha}{2}}:=\frac{1}{C_0}\Big(\varepsilon+a(\xi)|x'|^2\Big)^{\frac{\alpha(\lambda_1)-\tilde \alpha}{2}},
\end{aligned}
\end{equation*}
where $C_0=\frac{2}{\sum_{i=1}^{n-1} a^{ii}(\tilde\alpha-\alpha(\lambda_1))}$. Since $\frac{\tilde\alpha}{2}-1<0$, it follows that
\begin{equation*}
\begin{aligned}
|G(x')|\leq&\, \Big(\varepsilon+a(\xi)|x'|^2\Big)^{\frac{\alpha(\lambda_1)}{2}-1}   = \Big(\varepsilon+a(\xi)|x'|^2\Big)^{\frac{\alpha(\lambda_1)-\tilde\alpha}{2}+\frac{\tilde\alpha}{2}-1} \\
\leq&\, \varepsilon^{\frac{\tilde\alpha}{2}-1}\Big(\varepsilon+a(\xi)|x'|^2\Big)^{\frac{\alpha(\lambda_1)-\tilde\alpha}{2}}\leq C_0\varepsilon^{\frac{\tilde\alpha}{2}-1}L_{\varepsilon}\psi(x').
\end{aligned}
\end{equation*}
Thus, 
\begin{equation}
\label{equation_v22_gen}
\left\{
\begin{aligned}
\large|L_{\varepsilon} v_2(x')\large|=|G(x')|&\leq L_{\varepsilon}\Big(C_0\varepsilon^{\frac{\tilde\alpha}{2}-1}\psi(x')\Big)\quad\text{in}\,\,B_{R} \subset \bR^{n-1}, \\
v_2&=V_2 \quad\quad\quad\quad\quad\quad\quad\quad\text{on}\,\,\partial B_{R}.
\end{aligned}
\right.
\end{equation}
By using the maximum principle, see e.g. Theorem 8.1 in \cite{GT}, and together with $\|\psi\|_{L^{\infty}(B_{R})}\leq \varepsilon^{\frac{\alpha(\lambda_1)-\tilde\alpha}{2}}$, we complete the proof.
 \end{proof}

 \begin{lemma}\label{Linftyv4}
For $n \ge 3$, let $a(\xi)$ satisfy \eqref{a_assumption}, $\lambda_1$ and $\alpha(\lambda_1)$ be given by \eqref{first_eigenvalue} and \eqref{alpha_lambda_1}, $\sigma>1$, $\alpha>0$ and $v_3 \in H^1(B_{R} )$ satisfy
\begin{equation}
\label{equation_v4_gen}
\left\{
\begin{aligned}
\dv\Big[\big(\varepsilon+a(\xi)|x'|^2\big)\nabla v_3\Big]&= \partial_iF^i (x')+G\quad\text{in}\,\,B_{R} \subset \bR^{n-1}, \\
v_3&=0 \quad\quad\quad\quad\text{on}\,\,\partial B_{R}.
\end{aligned}
\right.
\end{equation}
Then

(i)  if $\|F\|_{\varepsilon,\sigma,1,R} <\infty$ and $G =0$, we have
\begin{equation*}
\|v_3\|_{L^{\infty}(B_{R})}\leq C \left\|F \right\|_{\varepsilon,\sigma,1,R} R^{\sigma-1}  ,
\end{equation*}
where $C$ depends only on $n$, $\sigma$ and $\|\ln a(\xi)\|_{L^{\infty}(\bS^{n-2})}$, independent of $\varepsilon$ and $R$; 

(ii) if $\left\|F_{\sqrt\varepsilon} \right\|_{\varepsilon,1,1,R}<\infty$ and $ G =0$, we have  
\begin{equation*}
\|v_3\|_{L^{\infty}(B_{R})}\leq C \left\|F_{\sqrt\varepsilon} \right\|_{\varepsilon,1,1,R} R   ,
\end{equation*}
where $F_{\sqrt\varepsilon}(x')=\frac{F(x')}{\sqrt\varepsilon}$, and $C$ depends only on $n$ and $\|\ln a(\xi)\|_{L^{\infty}(\bS^{n-2})}$, independent of $\varepsilon$ and $R$. 

(iii) if $\left\|F_{\varepsilon} \right\|_{\varepsilon,1,1,R}<\infty$ and $G =0$, we have
\begin{equation*}
\|v_3\|_{L^{\infty}(B_{R})}\leq C \left\|F_{\varepsilon} \right\|_{\varepsilon,1,1,R} R^2  ,
\end{equation*}
where $F_{\varepsilon}(x')=\frac{F(x')}{\varepsilon}$, and $C$ depends only on $n$ and $\|\ln a(\xi)\|_{L^{\infty}(\bS^{n-2})}$, independent of $\varepsilon$ and $R$.
 
(iv) if $F=0$ and $\|G\|_{\varepsilon,\alpha,1,R}<\infty$, we have
\begin{equation*}
\|v_4\|_{L^{\infty}(B_{R})}\leq C \|G\|_{\varepsilon,\alpha,1,R} R^{\alpha}   ,
\end{equation*}
where $F_{\varepsilon}(x')=\frac{F(x')}{\varepsilon}$, and $C$ depends only on $n$ and $\|\ln a(\xi)\|_{L^{\infty}(\bS^{n-2})}$, independent of $\varepsilon$ and $R$.

\end{lemma}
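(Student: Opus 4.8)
\textbf{Proof proposal for Lemma \ref{Linftyv4}.}

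The plan is to reduce all four statements to a single rescaled problem on $B_1$ and then invoke the $L^\infty$-bounds of Gilbarg--Trudinger for divergence-form equations with bounded measurable coefficients (Theorems 8.16 and 8.17 in \cite{GT}). First I would perform the change of variables $y'=x'/R$, set $\tilde v_3(y')=v_3(x')$, and record how each datum transforms: if $F^i(x')=R\,\tilde F^i(y')$ and $G(x')=R^{-2}\tilde G(y')$ then $\tilde v_3$ solves $\dv[(\tilde\varepsilon+a(\xi)|y'|^2)\nabla\tilde v_3]=\partial_i\tilde F^i+\tilde G$ on $B_1$ with zero boundary data, where $\tilde\varepsilon=\varepsilon/R^2$. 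Since $\|v_3\|_{L^\infty(B_R)}=\|\tilde v_3\|_{L^\infty(B_1)}$, it suffices to prove the $R=1$ case with the weighted norms computed with $\tilde\varepsilon$; the claimed powers of $R$ are exactly the scaling weights that come out of matching $\|\tilde F\|_{\tilde\varepsilon,\sigma,s,1}$ to $\|F\|_{\varepsilon,\sigma,s,R}$, etc. I would handle the four parts by estimating the relevant right-hand side in the norm required by \cite{GT}, namely $L^q$ for the $\partial_iF^i$ part and $L^{q/2}$ (or $L^{n/2+\delta}$ in dimension $n-1$) for the $G$ part, using that the ellipticity is only lost at the origin on a set of measure zero, so a fixed large $q>n-1$ works.

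For part (i), with $G=0$ and $\|F\|_{\varepsilon,\sigma,1,1}<\infty$ I have $|F(x')|\le \|F\|_{\varepsilon,\sigma,1,1}\,|x'|^\sigma$ (using $s=1$, so the $(\varepsilon+a|x'|^2)^{1-s}=1$ factor disappears), and since $\sigma>1>0$ this is bounded, hence in every $L^q(B_1)$; Theorem 8.16 in \cite{GT} gives $\|\tilde v_3\|_{L^\infty}\le C\|\tilde F\|_{L^q}\le C\|F\|_{\varepsilon,\sigma,1,R}$, and undoing the scaling produces the factor $R^{\sigma-1}$. Parts (ii) and (iii) are the same computation applied to $F_{\sqrt\varepsilon}=F/\sqrt\varepsilon$ and $F_\varepsilon=F/\varepsilon$ respectively: one checks $\|F_{\sqrt\varepsilon}\|_{\varepsilon,1,1,1}<\infty$ means $|F(x')|\le \sqrt\varepsilon\,\|F_{\sqrt\varepsilon}\|_{\varepsilon,1,1,1}|x'|$, again bounded, and the bookkeeping of the $\varepsilon$ versus $R$ powers yields $R$ and $R^2$; note in fact (iii) is a special case of (i) with $\sigma$ replaced appropriately once one tracks the $\varepsilon$-normalization, and these are essentially the previously stated (now commented-out) Lemma on $v_3$. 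For part (iv), with $F=0$ and $\|G\|_{\varepsilon,\alpha,1,1}<\infty$, I have $|G(x')|\le \|G\|_{\varepsilon,\alpha,1,1}|x'|^\alpha$, which for $\alpha>0$ lies in $L^{q}(B_1)$ for all $q$, in particular in $L^{n/2+\delta}$ (dimension $n-1$), so Theorem 8.17 in \cite{GT} applies and gives $\|\tilde v_3\|_{L^\infty}\le C\|\tilde G\|_{L^{q}}\le C\|G\|_{\varepsilon,\alpha,1,R}$; the scaling $G(x')=R^{-2}\tilde G(x'/R)$ combined with $\|G\|_{\varepsilon,\alpha,1,R}=R^{\alpha-2}\|\tilde G\|_{\tilde\varepsilon,\alpha,1,1}$ gives the stated $R^\alpha$. (I would also silently fix the typo: the conclusion of (iv) should read $\|v_3\|_{L^\infty(B_R)}$, not $\|v_4\|$.)

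The only genuine subtlety, and the step I would be most careful about, is the application of the Gilbarg--Trudinger estimates when the coefficient matrix $(\varepsilon+a(\xi)|x'|^2)I$ degenerates as $x'\to 0'$: strictly, for fixed $\varepsilon>0$ the operator is uniformly elliptic on $B_1$ with ellipticity constants $\varepsilon$ and $\varepsilon+\max a$, but the lower constant $\varepsilon$ degenerates, so a naive application would produce $\varepsilon$-dependent constants, which is exactly what the lemma forbids. The resolution is that all right-hand sides here have been normalized (divided by a power of $\varepsilon$ or bounded by an $\varepsilon$-free multiple of a power of $|x'|$) precisely so that the equation can be rewritten with right-hand side of size $O(1)$ relative to the \emph{$\varepsilon$-free} weight $a(\xi)|x'|^2$; concretely, I would either cite the uniform estimate already used implicitly in \cite{DLY2} for this class of operators, or run a De Giorgi--Nash--Moser / barrier argument using the auxiliary function $\psi(x')=-(\varepsilon+a(\xi)|x'|^2)^{\text{(small positive power)}}$ exactly as in the proof of Lemma \ref{maximumv22}, which supplies an $\varepsilon$-uniform comparison function and thereby an $\varepsilon$-uniform constant. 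Once that uniformity is in hand, the rest is the routine scaling bookkeeping sketched above.
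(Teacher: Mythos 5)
Your reduction to the $R=1$ case by rescaling is correct and matches the paper, but your main tool --- Gilbarg--Trudinger Theorems 8.16/8.17 --- does not deliver the $\varepsilon$-uniform constant in parts (i), (ii), (iv), and the remedies you sketch for this do not close the gap. Concretely, GT 8.16 gives $\|v_3\|_{L^\infty}\le C\lambda^{-1}\|F\|_{L^q}$ with $\lambda=\varepsilon$ the ellipticity lower bound. In part (iii) the normalization $|F|\le\varepsilon|x'|$ cancels the $\varepsilon^{-1}$, so that case does go through by the argument you describe (and indeed this is exactly the commented-out lemma in the source). But in part (i) $|F|\le|x'|^\sigma$ only gives $\|F\|_{L^q}\lesssim 1$, so you are left with $C\varepsilon^{-1}$; in part (ii) you are left with $C\varepsilon^{-1/2}$; in part (iv) with $C\varepsilon^{-1}$. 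You correctly flag this as ``exactly what the lemma forbids,'' but the two fixes you offer do not work: the barrier of Lemma \ref{maximumv22} is a pointwise maximum-principle device and does not handle the divergence-form data $\partial_iF^i$, and its conclusion already carries the explicit factor $\varepsilon^{\alpha(\lambda_1)/2-1}$, which is $\varepsilon$-dependent; and ``cite the uniform estimate used implicitly in \cite{DLY2}'' is not an argument.

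What the paper actually does, and what is missing from your sketch, is a Moser iteration that \emph{never invokes the lower bound $\varepsilon$ on the coefficient at all}. Testing against $|v_3|^{p-2}v_3$ and discarding the $\varepsilon$ term on the left (keeping only $a(\xi)|x'|^2\gtrsim|x'|^2$) gives the $\varepsilon$-free inequality $\int|x'|^2|v_3|^{p-2}|\nabla v_3|^2\le C\int|x'|^\sigma|v_3|^{p-2}|\nabla v_3|$. The crucial ingredient that lets the iteration run with this degenerate weight is the Caffarelli--Kohn--Nirenberg inequality $\|v\|_{L^{2(n+1)/(n-1)}(B_1,|x'|^2dx')}\le C\|\nabla v\|_{L^2(B_1,|x'|^2dx')}$ for part (i) and (iv), its $|x'|$-weighted variant $\|v\|_{L^{2n/(n-2)}(B_1,|x'|dx')}\le C\|\nabla v\|_{L^2(B_1,|x'|dx')}$ for part (ii), and the unweighted Sobolev inequality for part (iii). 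The mild restriction $\sigma>1$ is precisely what guarantees that the auxiliary integral $\int_{B_1}|x'|^{(\sigma-1)(n+1+2\mu)-(n-1+2\mu)}dx'$ is finite for small $\mu>0$, closing the iteration. Without naming this weighted Sobolev inequality --- whose weight $|x'|^2$ or $|x'|$ exactly absorbs the degeneracy of the operator --- the proof does not go through; that is the genuine missing idea here. (Your incidental claim that (iii) is a special case of (i) is also off: under the hypothesis of (iii), $|F|\le\varepsilon\|F_\varepsilon\|_{\varepsilon,1,1,R}|x'|$ does not control $|F|/|x'|^\sigma$ near the origin for any $\sigma>1$, and the conclusion of (i) would give $R^{\sigma-1}$, not $R^2$.)
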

\begin{proof}
(i) First, we prove the case where $R=1$. Then, through a change of variables, we can complete the proof of (i). Without loss of generality, we  assume that $\|F\|_{\varepsilon,\sigma,1,1}\leq1$, which implies that $|F^i_1(x')|\leq |x'|^{\sigma}$.
 
For $p\geq 2$, multiplying $|v_3|^{p-2}v_3$ in equation \eqref{equation_v4_gen} and using the integration by parts, we obtain
\begin{equation*} 
\begin{aligned}
(p-1)&\int_{B_1(0')}\Big(\varepsilon+a(\xi)|x'|^2\Big)|v_3|^{p-2}|\nabla v_3|^2 dx' 
=(p-1)\int_{B_1(0')}F_1^i\partial_iv_3|v_3|^{p-2} dx'.
\end{aligned}
\end{equation*}
Since $|F(x')|\leq |x'|^{\sigma}$, it follows from the H\"older inequality that 
\begin{equation*} \label{moser11}
\begin{aligned}
\int_{B_1}|x'|^2&|v_3|^{p-2}|\nabla v_3|^2dx'\leq\, C\int_{B_1}|x'|^{\sigma}|v_3|^{p-2}|\nabla v_3| dx'\\
\leq&\, \frac{1}{2}\int_{B_1}|x'|^2|v_3|^{p-2}|\nabla v_3|^2dx'+ C\int_{B_1}|x'|^{2\sigma-2}|v_3|^{p-2}dx',
\end{aligned}
\end{equation*}
which implies that
\begin{equation}\label{equleft}
\int_{B_1}|x'|^2|v_3|^{p-2}|\nabla v_3|^2dx'\leq C\int_{B_1}|x'|^{2\sigma-2}|v_3|^{p-2}dx'.
\end{equation}

By virtue of the following version of the Caffarelli-Kohn-Nirenberg inequality (see \cite{CKN}) in $\bR^{n-1}$:
\begin{equation*}
\label{CKN_inequality}
 \|v\|_{L^{\frac{2(n+1)}{n-1}}(B_{1}, |x'|^2dx')} \le C  \|\nabla v\|_{L^{2}(B_{1}, |x'|^2dx')} \quad \forall v\in H_0^1(B_1(0')),
\end{equation*}
we can deduce
\begin{equation}\label{ineq-v4p}
\begin{aligned}
\|v_3\|_{L^{\frac{p(n+1)}{n-1}}(B_1,|x'|^2dx')}^p=&\,\Big\||v_3|^{\frac{p}{2}}\Big\|^2_{L^{\frac{2(n+1)}{n-1} }(B_1,|x'|^2dx')}\\
\leq&\, C\Big\|\nabla|v_3|^{\frac{p}{2}}\Big\|_{L^2(B_1,|x'|^2dx')} \leq C\int_{B_1}|x'|^2|\nabla|v_3|^{\frac{p}{2}}|^2dx'\\
\leq&\, Cp^2\int_{B_1}|x'|^2|v_3|^{p-2}|\nabla v_3|^2dx'.
\end{aligned}
\end{equation}   
On the other hand, for the right hand side of \eqref{equleft}, we choose $\mu>0$ sufficiently small so that 
$\int_{B_1}|x'|^{(\sigma-1)(n+1+2\mu)-(n-1+2\mu)} dx' <\infty$. Consequently, we obtain 
\begin{equation*}
\begin{aligned}
\int_{B_1}|x'|^{2\sigma-2}|v_3|^{p-2}
\leq&\, \Big(\int_{B_1}|x'|^2|v_3|^{(p-2)\frac{n+1+2\mu}{n-1+2\mu}}dx'\Big)^{\frac{n-1+2\mu}{n+1+2\mu}}\\
&\quad\quad\cdot \Big(\int_{B_1}|x'|^{(\sigma-1)(n+1+2\mu)-(n-1+2\mu)} dx'\Big)^{\frac{n-1+2\mu}{n+1+2\mu}}\\
\leq&\, C \Big(\int_{B_1}|x'|^2|v_3|^{(p-2)\frac{n+1+2\mu}{n-1+2\mu}}dx'\Big)^{\frac{n-1+2\mu}{n+1+2\mu}}.
\end{aligned}
\end{equation*}
This, combining \eqref{equleft} and \eqref{ineq-v4p}, leads to
\begin{equation}\label{moser1}
\|v_3\|_{L^{\frac{p(n+1)}{n-1}}(B_1,|x'|^2dx')}^p\leq Cp^2\|v_3\|_{L^{\frac{(p-2)(n+1+2\mu)}{n-1+2\mu}}(B_1,|x'|^2dx')}^{p-2}. 
\end{equation}

For $p=2$, it follows from \eqref{moser1} that 
\begin{equation}\label{moser11}
\|v_3\|_{L^{\frac{2(n+1+2\mu)}{n-1+2\mu}}(B_1,|x'|^2dx')}\leq C.
\end{equation}
For $p>2$, by virtue of the H\"older inequality, \eqref{moser1} implies that
\begin{equation*} 
\begin{aligned}
 \|v_3\|_{L^{\frac{p(n+1)}{n-1}}(B_1,|x'|^2dx')}^p\leq&\, Cp^2\|v_3\|_{L^{\frac{(p-2)(n+1+2\mu)}{n-1+2\mu}}(B_1,|x'|^2dx')}^{p-2}  \\
\leq&\, \Big(Cp^4\|v_3\|_{L^{\frac{p(n+1+2\mu)}{n-1+2\mu}}(B_1,|x'|^2dx')}^{p-2}\Big)^{\frac{p}{p-2}}+\Big(\frac{1}{p^2}\Big)^{p/2} \\
\leq&\, p^C \|v_3\|_{L^{\frac{p(n+1+2\mu)}{n-1+2\mu}}(B_1,|x'|^2dx')}^{p}+\frac{C}{p^p}.\\
\end{aligned}
\end{equation*}
Therefore
\begin{equation}\label{moser2}
\|v_3\|_{L^{\frac{p(n+1)}{n-1}}(B_1,|x'|^2dx')}\leq p^{\frac{C}{p}} \|v_3\|_{L^{\frac{p(n+1+2\mu)}{n-1+2\mu}}(B_1,|x'|^2dx')} +\frac{C}{p}.
\end{equation}
We take $t_0=\frac{2(n+1+2\mu)}{n-1+2\mu}$, $\chi=\frac{n+1}{n-1}\frac{n-1+2\mu}{n+1+2\mu}>1$, $t_{i+1}=t_i\chi=t_0\chi^{i+1}$, and $p=t_0\frac{n-1+2\mu}{n+1+2\mu}\chi^i=2\chi^i$, and use \eqref{moser11}-\eqref{moser2} to obtain
\begin{equation*}
\begin{aligned}
\|v_3\|_{L^{t_{i+1}}(B_1,|x'|^2dx')}\leq&\, (2\chi^i)^{\frac{C}{2\chi^i}}\|v_3\|_{L^{t_{i}}(B_1,|x'|^2dx')}+\frac{C}{\chi^i}\\
\leq&\, (2\chi )^{\frac{Ci}{\chi^i}}\|v_3\|_{L^{t_{i}}(B_1,|x'|^2dx')}+\frac{C}{\chi^i}\\
\leq&\, (2\chi)^{\sum\limits_{k=0}^{i}\frac{Ck}{\chi^k}}\|v_3\|_{L^{t_{0}}(B_1,|x'|^2dx')}+\sum_{k=0}^{i}\frac{C}{\chi^k}\leq C.
\end{aligned}
\end{equation*}
Letting $i\to\infty$,  the case where $R=1$ is finished. For $R>0$, we set $y'=\frac{x'}{R}$. Let $\tilde v_3(y')=v_3(x')$ and $\tilde F(y')=\frac{F(x')}{R}$. Then $\tilde v_3$ satisfies 
\begin{equation*}
\label{equation_v31_gen}
\left\{
\begin{aligned}
\dv\Big[\big(\tilde\varepsilon+a(\xi)|y'|^2\big)\nabla \tilde v_3\Big]&= \partial_i\tilde{F^i} (y')\quad\text{in}\,\,B_{1} \subset \bR^{n-1},\\
\tilde v_3&=0 \quad\quad\quad\quad\text{on}\,\,\partial B_{1},
\end{aligned}
\right.
\end{equation*}
where $\tilde\varepsilon=\frac{\varepsilon}{R^2}.$ 
Therefore, we have
\begin{equation}
\|\tilde v_3 \|_{L^{\infty}(B_1)}\leq C\|\tilde F\|_{\tilde \varepsilon,\sigma,1,1}.
\end{equation}
Then, considering that $\|\tilde v_3 \|_{L^{\infty}(B_1)}=\|v_3\|_{L^{\infty}(B_R)}$ and $\|\tilde F\|_{\tilde \varepsilon,\sigma,1,1}\leq C\|F\|_{ \varepsilon,\sigma,1,R}R^{\sigma-1}$,  we can complete the proof of (i).

(ii) We only prove the case that $R=1$ and $\|F_{\sqrt\varepsilon}\|_{\varepsilon,1,1,1}\leq1$, which means that $|F(x')|\leq \sqrt\varepsilon|x'|$. 
By utilizing the inequalities $\sqrt\varepsilon |x'|\leq C\Big(\varepsilon+a(\xi)|x'|^2\Big)$ and $|F(x')|\leq\sqrt\varepsilon|x'|$, we can derive 
\begin{equation}\label{equleft1}
\int_{B_1}|x'| |v_3|^{p-2}|\nabla v_3|^2dx'\leq C\int_{B_1}|x'||v_3|^{p-2}|\nabla v_3| dx'.
\end{equation}
By means of the H\"older inequality and the following version of the Caffarelli-Kohn-Nirenberg inequality (see \cite{CKN}) in $\bR^{n-1}$:
\begin{equation*}
 \|v\|_{L^{\frac{2n}{n-2}}(B_{1}, |x'|dx')} \le C  \|\nabla v\|_{L^{2}(B_{1}, |x'|dx')} \quad \forall v\in H_0^1(B_1(0')),
\end{equation*}
similarly to the proof of (i), we can deduce from \eqref{equleft1} that
\begin{equation*}\label{moser3}
\|v_3\|_{L^{\frac{np}{n-2}}(B_1,|x'|dx')}^p \leq Cp^2 \|v_3\|_{L^{p}(B_1,|x'|dx')}^{p-2}.
\end{equation*}
By employing an iteration argument again, we can obtain $\|v_3\|_{L^{\infty}(B_1)}\leq C$. The proof of (ii) is finished.

(iii) We only prove the case that $R=1$ and $\|F_{\varepsilon}\|_{\varepsilon,1,1,1}\leq1$, indicating that $|F(x')|\leq  \varepsilon|x'|$. 
By using $ \varepsilon \leq  \varepsilon+a(\xi)|x'|^2 $ and $|F(x')|\leq \varepsilon|x'|$, we have 
\begin{equation}\label{equleft2}
\int_{B_1}  |v_3|^{p-2}|\nabla v_3|^2dx'\leq C\int_{B_1} |v_3|^{p-2}|\nabla v_3| dx'.
\end{equation}
We apply the following Sobolev-Poincar\'{e} inequality in $\bR^{n-1}$:
\begin{equation*}
 \|v\|_{L^{\frac{2n}{n-2}}(B_{1} )} \le C  \|\nabla v\|_{L^{2}(B_{1})} \quad \forall v\in H_0^1(B_1(0')),
\end{equation*}
 Similar to the proof of (i), it follows from \eqref{equleft2} that
\begin{equation*}\label{moser3}
\|v_3\|_{L^{\frac{np}{n-2}}(B_1 )}^p \leq Cp^2 \|v_3\|_{L^{p}(B_1)}^{p-2}.
\end{equation*}
By employing the iteration argument again, we conclude that $\|v_3\|_{L^{\infty}(B_1)}\leq C.$ The proof of (iii) is finished.

(iv) We only prove the case that $R=1$ and $\|G\|_{\varepsilon,\alpha,1,1}\leq1$, indicating that $|G(x')|\leq   |x'|^{\alpha}$. 
By using $ a(\xi)|x'|^2  \leq  \varepsilon+a(\xi)|x'|^2 $ and $|G(x')|\leq |x'|^{\alpha}$, we have 
\begin{equation}\label{equleft10}
\int_{B_1}|x'|^2|v_3|^{p-2}|\nabla v_3|^2dx'\leq C\int_{B_1}|x'|^{ \alpha}|v_3|^{p-2}dx', 
\end{equation}

 Similar to the proof of (i), for small $\mu>$ such that $\int_{B_1}|x'|^{(\frac{\alpha}{2})(n+1+2\mu)-(n-1+2\mu)} dx' <\infty$ , we have
\begin{equation}\label{moser10}
\|v_3\|_{L^{\frac{p(n+1)}{n-1}}(B_1,|x'|^2dx')}^p\leq Cp^2\|v_3\|_{L^{\frac{(p-2)(n+1+2\mu)}{n-1+2\mu}}(B_1,|x'|^2dx')}^{p-2}. 
\end{equation}
We conclude that $\|v_3\|_{L^{\infty}(B_1)}\leq C$ by employing the iteration argument again. The proof of (iv) is finished.
\end{proof}

\section{Proof of Theorem \ref{Thm1_upper}}\label{sec3}

In this section, we are dedicated to proving Theorem \ref{Thm1_upper}. Following a similar argument as in Section 4 of \cite{LZ}, we define
$$\bar{u}(x'):=\fint_{g(x')}^{\varepsilon+f(x')}u(x',x_{n})\,dx_{n},$$
then 
\begin{equation}\label{barv222}
\partial_i(\delta(x')\partial_i\bar u)+\partial_i\tilde F^{i}=0,\quad\mbox{where}~\delta(x')=\varepsilon+f(x')-g(x'),
\end{equation}
and 
$$\tilde F^{i}(x')=\int_{g(x')}^{\varepsilon+f(x')}\Big(\frac{x_n-\varepsilon-f(x')}{\delta(x')}\partial_{i}g(x')-\frac{x_n-g(x')}{\delta(x')}\partial_{i}f(x')\Big)\partial_{n}u(x)\,dx_n.$$

Because $\delta(x')=\varepsilon+a(\xi)|x'|^2+O(|x'|^{2+\gamma}),$ we can rewrite \eqref{barv222} as
\begin{equation}\label{barv3}
\partial_i\Big(\Big(\varepsilon+a(\xi)|x'|^2\Big)\partial_i\bar u\Big)=-\partial_iF^{i},\quad F^i(x')=\tilde F^i(x')+O(|x'|^{2+\gamma})\partial_i\bar u,
\end{equation}
and then divide $F^i=F_1^i+F_2^i,$ where
\begin{equation}\label{F1i}
F_1^i(x'):=\int_{f(x')}^{\varepsilon+ f(x')}\Big(\frac{x_n-\varepsilon-f(x')}{\delta(x')}\partial_{i}g(x')-\frac{x_n-g(x')}{\delta(x')}\partial_{i}f(x')\Big)\partial_{n}u(x)\,dx_n,
\end{equation}
and 
\begin{equation}\label{F2i}
F_2^i(x'):=\int_{g(x')}^{ f(x')}\Big(\frac{x_n-\varepsilon-f(x')}{\delta(x')}\partial_{i}g(x')-\frac{x_n-g(x')}{\delta(x')}\partial_{i}f(x')\Big)\partial_{n}u(x)\,dx_n+O(|x'|^{2+\gamma})\partial_i\bar u(x').
\end{equation}

By employing the  ``flipping argument" used in the proof of Theorem 1.2 in \cite{BLY2} or Theorem 1.1 in \cite{DLY}, we derive the following two lemmas. Since their proofs are quite similar, we will only present the proof of Lemma \ref{lemmaestu22}.    

\begin{lemma}\label{lemmaestu11}
For $n\geq3$, $0<\varepsilon \ll1$, let $u\in H^1(\Omega_{2R_0})$ be a solution of \eqref{main_problem_narrow} with $f$ and $g$ satisfying \eqref{fg_0}, \eqref{fg_1} and \eqref{fg_2}. We have
\begin{equation*}
\|\nabla u\|_{L^{\infty}(\Omega_{\sqrt{\varepsilon}})}\leq C\varepsilon^{-1/2}\|u-\bar u(0')\|_{L^{\infty}(\Omega_{2\sqrt{\varepsilon}})},
\end{equation*}
where $C$ depends only on $n$, $\gamma$, $R_0$, $\|f\|_{C^{2,\gamma}(\Gamma^+)}$, $\|g\|_{C^{2,\gamma}(\Gamma^-)}$ and $\|\ln a(\xi)\|_{L^{\infty}(\bS^{n-2})}.$
\end{lemma}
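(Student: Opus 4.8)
The plan is to reduce Lemma~\ref{lemmaestu11} to a scale‑invariant gradient bound for a harmonic function on a \emph{thin} slab with conormal conditions on its top and bottom faces, and then to exploit the thinness to get the exponent $-1/2$ instead of the naive $-1$; this is exactly the mechanism behind the ``flipping argument'' of \cite{BLY2,DLY}. \textbf{Reduction.} Since \eqref{main_problem_narrow} is invariant under adding a constant, we may assume $\bar u(0')=0$, so the right‑hand side becomes $\|u\|_{L^\infty(\Omega_{2\sqrt\varepsilon})}$. On $\Omega_{2\sqrt\varepsilon}$ one has $|x'|^2\le 4\varepsilon$, hence by \eqref{fg_2} the gap $\delta(x')=\varepsilon+f(x')-g(x')$ satisfies $\varepsilon\le\delta(x')\le C\varepsilon$; thus $\Omega_{2\sqrt\varepsilon}$ is a slab of horizontal size $\asymp\sqrt\varepsilon$ and thickness $\asymp\varepsilon$. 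Put $x=\sqrt\varepsilon\,y$, $\hat u(y):=u(\sqrt\varepsilon\,y)$; then $\hat u$ is harmonic on the rescaled region $\widehat\Omega_2$, $\partial_\nu\hat u=0$ on the rescaled graphs, $\widehat\Omega_2$ has horizontal extent $\asymp 1$ and thickness $h:=\sqrt\varepsilon\to0$, and the rescaled defining functions have $C^{2,\gamma}$‑norms $\lesssim\sqrt\varepsilon$. Since $\nabla_x u=\varepsilon^{-1/2}\nabla_y\hat u$, the lemma is equivalent to the $\varepsilon$‑uniform bound $\|\nabla_y\hat u\|_{L^\infty(\widehat\Omega_1)}\le C\|\hat u\|_{L^\infty(\widehat\Omega_2)}$, where the room between the smaller region (radius $1$) and the larger one (radius $2$) is what absorbs the transverse modes.

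\textbf{Flattening and unfolding.} Flatten the two curved faces to two parallel flat planes a distance $\asymp h$ apart — one straightens the boundaries but does \emph{not} normalise the thickness to $1$, since the latter would make the coefficient matrix anisotropic with $\varepsilon$‑dependent ellipticity. As the graphs are $O(\sqrt\varepsilon)$‑close to flat, the change of variables is $I+O(\sqrt\varepsilon)$, and $\hat u$ becomes $w$ solving $\dv_z(A\nabla_z w)=0$ on $\{|z'|<2\}\times(0,h)$ with $A=I+O(\sqrt\varepsilon)$ uniformly elliptic and $C^{1,\gamma}$, bounds independent of $\varepsilon$, and $A\nabla_z w\cdot e_n=0$ on $z_n=0,h$. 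Even reflection across $z_n=0$ and $z_n=h$, iterated, extends $w$ to a $2h$‑periodic (in $z_n$) solution of a uniformly elliptic equation with $C^{0,\gamma}$, $\varepsilon$‑independent coefficients on $\{|z'|<2\}\times\bR$, with no Neumann boundary left.

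\textbf{The thin‑slab estimate.} Decompose $w=\bar w(z')+v$, where $\bar w(z'):=\frac{1}{2h}\int_0^{2h}w(z',z_n)\,dz_n$ is the fibre average over a period (morally, $\bar w$ plays the role of $\bar u$ from \eqref{barv3}). Averaging the equation in $z_n$ shows $\bar w$ solves a uniformly elliptic equation on $\{|z'|<2\}\subset\bR^{n-1}$ with $\varepsilon$‑independent coefficients and an error source built from $v$, so interior Schauder estimates give $\|\bar w\|_{C^1(\{|z'|<3/2\})}\le C\|\bar w\|_{L^\infty(\{|z'|<2\})}+C\|v\|_{C^\gamma}\le C\|w\|_{L^\infty}+C\|v\|_{C^\gamma}$. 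For $v$, which has zero fibre‑average, a Caccioppoli/Agmon‑type weighted energy estimate (using the Poincar\'e inequality $\|v\|_{L^2(\mathrm{period})}\le Ch\|\partial_{z_n}v\|_{L^2(\mathrm{period})}$ together with the equation), or equivalently a Fourier decomposition in $z_n$ whose nonconstant modes $c_k(z')$ solve an equation with a large negative zeroth‑order term of size $\asymp(k/h)^2$ and hence decay like $e^{-c'k/h}$ off $\{|z'|=2\}$, yields $\|v\|_{C^1(\{|z'|<3/2\}\times\bR)}\le Ce^{-c'/\sqrt\varepsilon}\|w\|_{L^\infty}$; the $O(\sqrt\varepsilon)$ and $O(e^{-c'/\sqrt\varepsilon})$ couplings between the two equations close by a simple fixed‑point argument. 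Adding the estimates gives $\|\nabla_z w\|_{L^\infty(\{|z'|<3/2\}\times\bR)}\le C\|w\|_{L^\infty}$; undoing the flattening and then the rescaling of the Reduction step yields $\|\nabla u\|_{L^\infty(\Omega_{\sqrt\varepsilon})}\le C\varepsilon^{-1/2}\|u\|_{L^\infty(\Omega_{2\sqrt\varepsilon})}$.

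\textbf{The main obstacle.} The heart of the matter is the uniform‑in‑$\varepsilon$ transverse decay: one must show that in the narrow region $u$ is governed by its vertical average — which lives on the horizontal scale $\sqrt\varepsilon$ and hence has gradient $\lesssim\varepsilon^{-1/2}\,\osc u$ — while $\partial_{x_n}u$ and the $x_n$‑oscillation are of strictly lower order. This is precisely the content of the flipping argument of \cite{BLY2,DLY} invoked above. A secondary but genuinely technical point is carrying out the flattening so that the coefficients stay uniformly elliptic and $C^{1,\gamma}$, hence $C^{0,\gamma}$ after even reflection, with bounds independent of $\varepsilon$; this is what forces flattening to the slab's intrinsic height $\asymp\sqrt\varepsilon$ rather than to unit height. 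We note that Lemma~\ref{lemmaestu11} is the exact analogue of Lemma~\ref{lemmaestu22}, whose proof is given in detail below, and the argument above carries over verbatim with $\Omega_{\sqrt\varepsilon}$ and $\Omega_{2\sqrt\varepsilon}$ in place of the larger regions appearing there.
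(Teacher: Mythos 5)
Your overall plan is the right one and matches the paper's intent: the paper explicitly omits the proof of Lemma~\ref{lemmaestu11} because it is the same ``flipping argument'' used for Lemma~\ref{lemmaestu22}, specialised to the scale $\rho=\sqrt\varepsilon$. Your Reduction step (rescale $x=\sqrt\varepsilon\,y$ so that $\nabla_x u=\varepsilon^{-1/2}\nabla_y\hat u$), flattening, even reflection and $2h$-periodic extension all reproduce the paper's change of variables
$y'=x'$, $y_n=2\rho^2\bigl(\tfrac{x_n-g}{\delta}-\tfrac12\bigr)$ followed by $z=y/\rho$.

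There is, however, a genuine error in the step ``Flattening and unfolding,'' and it feeds into the fixed-point argument you use to close the estimate. You claim that, because the rescaled graph functions $\hat f,\hat g$ have $C^{2,\gamma}$ norm $\lesssim\sqrt\varepsilon$, the flattening map is $I+O(\sqrt\varepsilon)$ and hence $A=I+O(\sqrt\varepsilon)$. That is false: the flattening sends a slab of \emph{variable} thickness $h+\hat f-\hat g$ to a slab of \emph{constant} thickness $h=\sqrt\varepsilon$, and on $|y'|\asymp 1$ one has $\hat f(y')-\hat g(y')=\sqrt\varepsilon\,a(\xi)|y'|^2+o(\sqrt\varepsilon)\asymp h$, i.e.\ the gap is \emph{comparable to}, not small relative to, $h$. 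Consequently $\partial z_n/\partial y_n=h/(h+\hat f-\hat g)$ deviates from $1$ by an $O(1)$ amount, and the diagonal part of $A$ is not close to the identity. (Compare the paper's own estimates \eqref{bij}: there one only gets $\frac{I}{C}\le\tilde b\le CI$, $\|\nabla\tilde b\|_{L^\infty}\le C$, and smallness $\|\tilde b^{in}\|_{L^\infty}\lesssim\rho$ for the \emph{mixed} entries; nowhere is $\tilde b$ close to $I$.) The quantities that actually are small are the off-block entries $A^{in}=O(\sqrt\varepsilon)$ and the $z_n$-derivative $\partial_{z_n}A=O(\sqrt\varepsilon)$, not $A-I$. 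Because your ``simple fixed-point argument'' is premised on $O(\sqrt\varepsilon)$ couplings coming from $A-I$, it does not close as written; it would have to be reorganised around the correct small quantities.

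Note that the paper sidesteps this entirely. After the unfolding, the coefficients $\hat b^{ij}$ are uniformly elliptic with $\varepsilon$-independent $C^\mu$ bounds, and one directly invokes the interior $C^1$ estimates of \cite{LN}*{Proposition 4.1} and \cite{LY}*{Lemma 2.1} (which handle the piecewise regularity created by the reflection of the mixed entries) to get $\|\hat v-\bar v(0')\|_{C^1}\lesssim\|\hat v-\bar v(0')\|_{L^2}$, uniformly in $\varepsilon$. No fibre-averaging, exponential-decay or fixed-point machinery is needed. Your decomposition $w=\bar w+v$ is an interesting alternative that makes the ``why thinness helps'' mechanism explicit, but to make it rigorous you must (i) drop the assertion $A=I+O(\sqrt\varepsilon)$ and instead record only uniform ellipticity, $A^{in}=O(\sqrt\varepsilon)$, and $\partial_{z_n}A=O(\sqrt\varepsilon)$; and (ii) redo the $\bar w$/$v$ coupling with those smallness inputs, noting in particular that the bulk source in the $v$-equation is controlled by $\partial_{z_n}A$, not by $A-I$.
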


Denote  
\begin{equation}\label{Qst}
Q_{t,s}(z):=\{y=(y',y_n)\in \bR^n|~|y'-z'|<s, \,|y_n|<t\}, \quad Q_{t,s}=Q_{t,s}(0).
\end{equation}
\begin{lemma}\label{lemmaestu22}
 For $n\geq3$, $0<\varepsilon\ll1$, let $f$ and $g$ satisfy \eqref{fg_0}, \eqref{fg_1} and \eqref{fg_2}. For any solution $u\in H^1(\Omega_{2R_0})$ of \eqref{main_problem_narrow} and $ \sqrt{\varepsilon}\leq \rho\leq R_0$ , we have
\begin{equation*}
\|u-\bar u(0')\|'_{C^1(\Omega_{\rho}\backslash\Omega_{\frac{3}{4}\rho})}+\|\bar u-\bar u(0')\|'_{C^{1,\mu}(B_{\rho}(0')\backslash B_{\frac{3}{4}\rho}(0'))}\leq C \Big(\fint_{\Omega_{2\rho}\backslash\Omega_{\rho/2}}|u-\bar u(0')|^2\Big)^{1/2},
\end{equation*}
where $C$ depends only on $n$, $\gamma$, $R_0$, $\|f\|_{C^{2,\gamma}(\Gamma^+)}$, $\|g\|_{C^{2,\gamma}(\Gamma^-)}$ and $\|\ln a(\xi)\|_{L^{\infty}(\bS^{n-2})}.$
\end{lemma}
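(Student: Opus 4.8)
The plan is to prove Lemma~\ref{lemmaestu22} by the flipping argument of \cite{BLY2,DLY}: rescale to unit transversal scale, reflect $u$ across $\Gamma_\pm$ to remove the Neumann boundary, and invoke classical interior elliptic estimates for the resulting uniformly elliptic equation. Fix $\rho\in[\sqrt\varepsilon,R_0]$. Since $\delta(x'):=\varepsilon+f(x')-g(x')=\varepsilon+a(\xi)|x'|^2+O(|x'|^{2+\gamma})$ and $\varepsilon\le\rho^2$, the transversal width satisfies $\delta(x')\sim\rho^2$ on the shell $\rho/2<|x'|<2\rho$, with comparability constants controlled by $\|\ln a(\xi)\|_{L^\infty(\bS^{n-2})}$, $\gamma$, $R_0$, $\|f\|_{C^{2,\gamma}}$ and $\|g\|_{C^{2,\gamma}}$; this is the only use of $\rho\ge\sqrt\varepsilon$. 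Rescaling by $1/\rho$, the function $\hat u(\hat x):=u(\rho\hat x)$ is harmonic in the thin slab
\begin{equation*}
\hat\Omega:=\{\hat g(\hat x')<\hat x_n<\hat\varepsilon+\hat f(\hat x'),\ 1/2<|\hat x'|<2\},\qquad \hat f=\rho^{-1}f(\rho\,\cdot),\ \hat g=\rho^{-1}g(\rho\,\cdot),\ \hat\varepsilon=\varepsilon/\rho\le\rho ,
\end{equation*}
of thickness $\hat\delta:=\hat\varepsilon+\hat f-\hat g\sim\rho$, with $\|\hat f\|_{C^2},\|\hat g\|_{C^2}=O(\rho)$ (since $\nabla f(0')=\nabla g(0')=0$). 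After undoing this rescaling the lemma reduces to the scale-invariant claim that, for every constant $c$,
\begin{equation*}
\|\hat u-c\|'_{C^1(\{3/4<|\hat x'|<1\}\cap\hat\Omega)}+\|\widehat{\bar u}-c\|'_{C^{1,\mu}(\{3/4<|\hat x'|<1\})}\le C\Big(\fint_{\hat\Omega}|\hat u-c|^2\Big)^{1/2};
\end{equation*}
we will take $c=\bar u(0')$.

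The next step flattens and flips. Choose a change of variables $\eta=\eta(\hat x)$, affine in $\hat x_n$ to leading order, that straightens $\hat\Gamma_-=\{\hat x_n=\hat g\}$ to $\{\eta_n=0\}$ and $\hat\Gamma_+=\{\hat x_n=\hat\varepsilon+\hat f\}$ to $\{\eta_n=\bar\delta\}$ for a fixed reference value $\bar\delta$ of $\hat\delta$ (so the thickness stays $\sim\rho$), with $\eta'=\hat x'$ corrected by an $O(\rho)$ amount so that $\{\eta_n=0\}$ and $\{\eta_n=\bar\delta\}$ are conormal for the transformed operator (boundary-normal type coordinates). Because $\nabla\hat f,\nabla\hat g=O(\rho)$ and $\hat\delta/\bar\delta$ is bounded above and below on $1/2<|\hat x'|<2$ in terms of $\|\ln a(\xi)\|_{L^\infty}$, this map has bounded Jacobian, and $\hat u$ becomes a weak solution of a uniformly elliptic equation $\dv(A(\eta)\nabla_\eta\hat u)=0$ on $\{1/2<|\eta'|<2,\ 0<\eta_n<\bar\delta\}$ with $A\in C^{0,\gamma}$ (from $f,g\in C^{2,\gamma}$), the homogeneous Neumann condition turning into $\partial_{\eta_n}\hat u=0$ on $\{\eta_n=0\}\cup\{\eta_n=\bar\delta\}$. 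Now extend $\hat u$ by even reflection across these two parallel planes and iterate, i.e. pass to the $2\bar\delta$-periodic (in $\eta_n$) extension $U$; this is a single extension, so $U$ solves a uniformly elliptic equation $\dv(\tilde A(\eta)\nabla U)=0$ on $\{1/2<|\eta'|<2\}\times\bR$ with $\tilde A\in C^{0,\gamma}$ the even/odd periodic extension of $A$, and the ellipticity and Hölder constants do not deteriorate.

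On this genuinely elliptic problem apply the interior $C^{1,\mu}$ (hence $C^1$) estimate, with some $\mu<\gamma$, on the unit-size region $Z':=\{3/4<|\eta'|<1\}\times\{|\eta_n|<1\}$: for every constant $c$,
\begin{equation*}
\|U-c\|'_{C^{1,\mu}(Z')}\le C\Big(\fint_{\{1/2<|\eta'|<2\}\times\{|\eta_n|<1\}}|U-c|^2\Big)^{1/2},
\end{equation*}
with $C$ depending only on the ellipticity and Hölder bounds of $\tilde A$, hence only on the admissible quantities. Because the period $2\bar\delta\sim\rho$ is tiny, the cylinder on the right contains $\sim\rho^{-1}$ full periods, so its $L^2$-average of $|U-c|^2$ is comparable to the average over one period, which (undoing the flattening, with its bounded Jacobian) equals $\fint_{\hat\Omega}|\hat u-c|^2=\fint_{\Omega_{2\rho}\setminus\Omega_{\rho/2}}|u-c|^2$. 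Undoing the flattening on $Z'$ and the rescaling (which turns $\nabla_\eta$ into $\sim\rho\,\nabla_x$), and taking $c=\bar u(0')$, yields
\begin{equation*}
\|u-\bar u(0')\|_{L^\infty(\Omega_\rho\setminus\Omega_{3\rho/4})}+\rho\,\|\nabla u\|_{L^\infty(\Omega_\rho\setminus\Omega_{3\rho/4})}\le C\Big(\fint_{\Omega_{2\rho}\setminus\Omega_{\rho/2}}|u-\bar u(0')|^2\Big)^{1/2},
\end{equation*}
which is the $u$-part of the lemma because $\diam(\Omega_\rho\setminus\Omega_{3\rho/4})\sim\rho$. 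For the $\bar u$-part, note that in the flattened variables $\bar u$ is a weighted $\eta_n$-average of the periodic $U$, $\widehat{\bar u}(\hat x')=\int_0^{\bar\delta}U(\hat x',\eta_n)\,w(\hat x',\eta_n)\,d\eta_n$, with $w\in C^{0,\gamma}$ of controlled norm and $\int_0^{\bar\delta}w\,d\eta_n\equiv1$; differentiating under the integral and using $\int_0^{\bar\delta}\partial_{\hat x'}w=0$ to handle the $\bar u(0')$-normalization, the $C^{1,\mu}$ bound for $U$ passes to $\bar u$, giving $\|\bar u-\bar u(0')\|'_{C^{1,\mu}(B_\rho(0')\setminus B_{3\rho/4}(0'))}\le C(\fint_{\Omega_{2\rho}\setminus\Omega_{\rho/2}}|u-\bar u(0')|^2)^{1/2}$. (Equivalently, $\bar u$ solves the uniformly elliptic equation \eqref{barv222} on $\{\rho/2<|x'|<2\rho\}$, whose right-hand side $\partial_i\tilde F^i$ is controlled via the bound just obtained for a local $C^{0,\alpha}$ norm of $\nabla u$ together with $\|f\|_{C^{2,\gamma}},\|g\|_{C^{2,\gamma}}$; interior Schauder estimates plus the Jensen inequality $\|\bar u-\bar u(0')\|_{L^2(B_{2\rho}(0')\setminus B_{\rho/2}(0'))}\le\|u-\bar u(0')\|_{L^2(\Omega_{2\rho}\setminus\Omega_{\rho/2})}$ then finish.)

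The main obstacle is the construction of the flattening map in the second step: one needs a single change of variables that simultaneously straightens both $\hat\Gamma_\pm$ to parallel planes at fixed $\eta_n$-levels (so that honest even reflection, hence a clean periodic extension with no loss of coefficient regularity, is available); keeps the transversal scale $\sim\rho$ rather than normalizing it to $1$ (flattening to unit thickness would put a coefficient of order $\rho^2$ on the $\eta'$-derivatives and break ellipticity); makes the flattened boundaries conormal to the foliation $\{\eta_n=\mathrm{const}\}$, so that the Neumann condition becomes $\partial_{\eta_n}\hat u=0$ and $\tilde A$ is continuous across the interfaces (a naive reflection $\hat x_n\mapsto 2\hat g-\hat x_n$ makes $\tilde A$ jump and yields only De Giorgi--Nash regularity); and has Jacobian bounded in terms of the admissible quantities alone. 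Everything else is bookkeeping of scales — chiefly identifying the $L^2$-average over the unit $\eta$-cylinder (many periods) with that over $\Omega_{2\rho}\setminus\Omega_{\rho/2}$ — together with classical interior elliptic regularity. Lemma~\ref{lemmaestu11} is proved identically, working at the scale $\sqrt\varepsilon$.
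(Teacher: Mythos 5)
Your overall architecture — rescale to a shell of unit horizontal size, flatten the narrow region to a slab, extend by even/periodic reflection to kill the Neumann condition, and then invoke classical interior regularity — is indeed the flipping argument that the paper uses, and the order in which you rescale and flatten is immaterial. The gap is in the step you yourself flag as the main obstacle: you require a change of variables that straightens \emph{both} $\Gamma_\pm$ to parallel planes \emph{and} makes both flattened boundaries conormal for the transformed operator, so that the Neumann condition becomes $\partial_{\eta_n}\hat u = 0$ and the reflected coefficient matrix is continuous. This is a strong requirement: a straightening map affine in $\hat x_n$ (which is what you describe) has $n-1$ scalar functions of $\hat x'$ of freedom in the tangential correction, but making $A^{nj}=0$ for $j<n$ on each of the two interfaces is $2(n-1)$ pointwise conditions, so generically the two boundaries cannot both be made conormal within that ansatz. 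Dropping the affine structure entangles the reflection itself, since $\eta'$ would then also have to be reflected. You do not construct the map, and the assertion that it exists is not justified.

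The paper avoids this entirely. It uses the $x_n$-affine straightening
$y'=x'$, $y_n=2\rho^2\big(\tfrac{x_n-g(x')}{\delta(x')}-\tfrac12\big)$,
under which the Neumann condition becomes the conormal condition $b^{nj}\partial_j v=0$ on $|y_n|=\rho^2$ (not $\partial_n v=0$), and then even-extends $v$ while \emph{odd}-extending the mixed coefficients $b^{in},b^{ni}$ (setting $\hat b^{nk}(z)=(-1)^l\tilde b^{nk}(\cdot)$ on $S_l$). The resulting $\hat b$ genuinely jumps across the reflection planes $z_n=(2l+1)\rho$, exactly as you fear, but the weak formulation still holds across the interfaces because the sign flip makes the flux continuous. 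The needed interior $C^1$ estimate is then supplied not by Schauder (which would require continuous coefficients) but by the composite-material regularity of Li--Nirenberg, \cite{LN}*{Proposition~4.1}, combined with \cite{LY}*{Lemma~2.1}; these give piecewise $C^{1,\alpha}$ control for piecewise H\"older coefficients with Lipschitz interfaces, which is much stronger than De Giorgi--Nash. So your worry that ``a naive reflection yields only De Giorgi--Nash regularity'' is the wrong diagnosis, and the conormal flattening you propose as the cure is both unnecessary and unproved. If you keep your structure, replace the conormal-coordinates step by the sign-flipped reflection and cite the Li--Nirenberg interface estimate; the $C^{1,\mu}$ bound for $\bar u$ then follows, as you sketch and as the paper does, from the averaged equation \eqref{barv222} via a local $W^{2,p}$/Schauder estimate once $\nabla v$ is controlled in $C^\mu$ on the shell.
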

\begin{proof}
For $\rho\geq\sqrt\varepsilon$, we make use of the following change of variables
\begin{equation*}
\left\{
\begin{aligned}
y' &= x', \\
y_n &=  2 \rho^2 \Big( \frac{x_n - g(x')}{f(x') - g(x')+\varepsilon} - \frac{1}{2} \Big),
\end{aligned}\right.
\quad \forall (x',x_n) \in \Omega_{2\rho} \setminus \Omega_{\rho/2},
\end{equation*}
to map the domain $\Omega_{2\rho} \setminus \Omega_{\rho/2}$ to $Q_{\rho^2,2\rho} \setminus Q_{\rho^2,\rho/2}$, where $Q_{t,s}$ is defined by \eqref{Qst}. Let $v(y) = u(x)$. Then $v(y)$ satisfies
\begin{equation*}
\left\{
\begin{aligned}
-\partial_i(b^{ij}(y) \partial_j v(y)) &=0 \quad \mbox{in } Q_{\rho^2,2\rho} \setminus Q_{\rho^2,\rho/2},\\
 b^{nj}(y) \partial_j v(y) &= 0 \quad \mbox{on } |y_n| =\rho^2,
\end{aligned}
\right.
\end{equation*}
where $
(b^{ij}(y)) = \frac{(\partial_x y) (\partial_x y)^t}{\det (\partial_x y)}$. Then 
$\bar v(y')=\fint_{-\rho^2}^{\rho^2}v(y',y_n)dy_n=\bar u(x')$, 
and satisfies
\begin{equation}\label{equbarv}
\partial_i(b^{ii}\partial_i\bar v)=\partial_iF^i,\quad\mbox{where}~F^i=-\fint_{-\rho^2}^{\rho^2} b^{in}\partial_nvdy_n.
\end{equation}
It is straightforward to verify that
\begin{equation}\label{bij}
\begin{aligned}
&\quad\quad\quad\quad\quad\quad\frac{1}{C}\leq\|b^{ii}\|_{L^{\infty}(Q_{\rho^2,2\rho}\backslash Q_{\rho^2,\rho/2})}\leq C, \quad 1\leq\,i\leq n,\\
&\|b^{in}\|_{L^{\infty}(Q_{\rho^2,2\rho}\backslash Q_{\rho^2,\rho/2})}\leq C\rho,\quad [b^{in}]_{C^{\mu}(Q_{\rho^2,2\rho}\backslash Q_{\rho^2,\rho/2})}\leq C\rho^{1-2\mu}, \quad 1\leq\,i\leq n-1,\\
&\quad\quad\quad\quad\quad\quad\quad\|\nabla b^{ij}\|_{L^{\infty}(Q_{\rho^2,2\rho}\backslash Q_{\rho^2,\rho/2})}\leq C\rho^{-1}, \quad 1\leq\,i,j\leq n-1.
\end{aligned}
\end{equation}

Let $\tilde{b}^{ij}(z) = b^{ij}(\rho z)$ and  $\tilde{v}(z) = v(\rho z)$. Then $\tilde{v}(z)$ satisfies
\begin{equation*}
\left\{
\begin{aligned}
-\partial_i(\tilde b^{ij}(z) \partial_j \tilde v(z)) &=0 \quad \mbox{in } Q_{\rho,2} \setminus Q_{\rho,1/2},\\
\tilde b^{nj}(z) \partial_j \tilde v(z) &= 0 \quad \mbox{on }  |z_n|=\rho,
\end{aligned}
\right.
\end{equation*}
with $
\frac{I}{C} \le \tilde{b} \le CI$ and $\| \tilde{b} \|_{C^{\mu} (Q_{\rho,2} \setminus Q_{\rho,1/2})} \le C$. We define
$$
S_l:= \left\{z \in \bR^n ~\big|~  1/2 < |z'| < 2,~ (2l-1) \rho < z_n < (2l+1) \rho \right\},
$$
for any integer $l$, and $S_{s,t}^m: = \left\{z \in \bR^n ~\big|~  s < |z'| < t,~ |z_n| < m\right\}$. Note that $Q_{\rho,2} \setminus Q_{\rho,1/2} = S_0$. We take the even extension of $\tilde v$ with respect to $y_n=\rho$ and then take the periodic extension (so that the period is equal to $2\rho$).
More precisely, we define, for any $l \in \bZ$, a new function $\hat{v}$ by setting
$$\hat{v}(z) := \tilde{v}\Big(z', (-1)^l\big(z_n - 2l \rho\big)\Big), \quad \forall z \in S_l.$$
We also define the corresponding coefficients, for $k = 1,2, \cdots, n-1$,
$$\hat{b}^{nk}(z)=\hat{b}^{kn}(z) := (-1)^l\tilde{b}^{nk}\Big(z', (-1)^l\big(z_n - 2l \rho\big)\Big),  \quad \forall z \in S_l,$$
and for other indices,
$$\hat{b}^{ij}(z) := \tilde{b}^{ij}\Big(z', (-1)^l\big(z_n - 2l \rho\big)\Big), \quad \forall z \in S_l.$$
Then $\hat{v}$ and $\hat{b}^{ij}$ are defined in the infinite ring $Q_{2, \infty} \setminus Q_{1/2, \infty}$. In particular, $\hat{v}$ satisfies the equation
$$
\partial_i (\hat{b}^{ij} \partial_j \hat{v}) = 0 \quad \mbox{in}\,\,S_{1/2,2}^2.
$$
By \cite{LN}*{Proposition 4.1} and \cite{LY}*{Lemma 2.1}, we have
$$
\| \hat v-\bar v(0') \|_{C^1(S_{5/8,7/4}^1)} \le C \| \hat v-\bar v(0')\|_{L^2(S_{1/2,2}^2)} ,
$$
Rescaling back to $u$, we have
\begin{equation}\label{est1}
\|u-\bar u(0')\|'_{C^1(\Omega_{\frac{7}{4}\rho}\backslash{\Omega_{\frac{5}{8}\rho}})}\leq C \Big(\fint_{\Omega_{2\rho}\backslash\Omega_{\rho/2}}|u-\bar u(0')|^2\Big)^{1/2}.
\end{equation}

For $\frac{11}{16}\rho\leq|y'|\leq\frac{3}{2}\rho$, let $\tilde{b}^{ij}(z) = b^{ij}(y'+\rho^2z',\rho^2z_n)$ and $\tilde{v}(z) = v(y'+\rho^2z',\rho^2z_n)$. Then $\tilde{v}$ satisfies
\begin{equation*}
\left\{
\begin{aligned}
-\partial_i(\tilde b^{ij}(z) \partial_j \tilde v(z)) &=0 \quad \mbox{in } Q_{1, 1} ,\\
\tilde b^{nj}(z) \partial_j \tilde v(z) &= 0 \quad \mbox{on } |z_n|=1.
\end{aligned}
\right.
\end{equation*}
By the $W^{2,p}$ estimates, for $p>n$,
\begin{equation*}
\begin{aligned} 
\|\nabla \tilde v\|_{C^{\mu}(Q_{\frac{1}{2},1})}\leq&\, C \|\tilde v-(\tilde v)_{Q_{1,1}}\|_{W^{2,p}(Q_{1,1})}\leq C \|\tilde v-(\tilde v)_{Q_{1,1}}\|_{L^{\infty}(Q_{1,1})}\\
\leq&\, C \|\nabla \tilde v\|_{L^{\infty}(Q_{1,1})} \leq C\rho^2 \|\nabla  v\|_{L^{\infty}(Q_{\rho^2,\rho^2}(y))}\\
\leq&\,  C\rho^2 \|\nabla  v\|_{L^{\infty}(Q_{\rho^2,\frac{7}{4}\rho}\backslash Q_{\rho^2,\frac{5}{8}\rho})} \leq C \rho\|u-\bar u(0')\|'_{C^1(\Omega_{\frac{7}{4}\rho}\backslash{\Omega_{\frac{5}{8}\rho}})}.
\end{aligned}
\end{equation*}
By rescaling, we obtain
\begin{equation}\label{cmu}
\rho\|\nabla v\|_{L^{\infty}(Q_{\rho^2,\frac{3}{2}\rho}\backslash Q_{\rho^2,\frac{11}{16}\rho})}+\rho^{1+2\mu} [\nabla v]_{C^{\mu}(Q_{\rho^2,\frac{3}{2}\rho}\backslash Q_{\rho^2,\frac{11}{16}\rho})}\leq C  \|u-\bar u(0')\|'_{C^1(\Omega_{\frac{7}{4}\rho}\backslash{\Omega_{\frac{5}{8}\rho}})}.
\end{equation}
Noting that $\bar v$ satisfies \eqref{equbarv}, and using the scaling technique, \eqref{bij}, \eqref{cmu} and \eqref{est1}, we have
\begin{equation*}
\begin{aligned}
\|\bar v-\bar v(0')\|'_{C^{1,\mu}(B_{\rho}(0')\backslash B_{\frac{3}{4}\rho}(0'))}\leq&\, C  \|u-\bar u(0')\|'_{C^1(\Omega_{\frac{7}{4}\rho}\backslash{\Omega_{\frac{5}{8}\rho}})}\\
\leq&\, C \Big(\fint_{\Omega_{2\rho}\backslash\Omega_{\rho/2}}|u-\bar u(0')|^2dx\Big)^{1/2}.
\end{aligned}
\end{equation*}
Returning to $\bar u$, the proof is completed.
\end{proof}

By Lemma \ref{lemmaestu11} and \ref{lemmaestu22}, the following Corollary is immediate.

\begin{corollary}\label{rubaru}
For $n\geq3$, $0<\varepsilon \ll1$, let $f$ and $g$ satisfy \eqref{fg_0}, \eqref{fg_1} and \eqref{fg_2}. For any solution $u\in H^1(\Omega_{2R_0})$ of \eqref{main_problem_narrow} with $\|u\|_{L^{\infty}(\Omega_{2R_0})}\leq 1$, if $x\in\Omega_{R_0}\backslash \Omega_{\sqrt\varepsilon}$ , we have
\begin{equation}\label{rubaru1}
\begin{aligned}
 &\|u-\bar u(0')\|_{C^1(\Omega_{|x'|})}^{'}+\|\bar u-\bar u(0')\|_{C^{1,\mu}(\Omega_{|x'|}\backslash\Omega_{\frac{3}{4}|x'|})}^{'}\\
&\leq C\Big(|x'|^2+\big(\fint_{B_{2|x'|}(0')\backslash B_{|x'|/2}(0')}|\bar u-\bar u(0')|^2dx'\big)^{1/2}\Big);
\end{aligned}
\end{equation}
if $x\in \Omega_{\sqrt\varepsilon}$ , we have
\begin{equation}\label{rubaru2}
 \|u-\bar u(0')\|_{C^1(\Omega_{\sqrt\varepsilon})}^{'}\leq C\Big(\varepsilon+\big(\fint_{B_{4\sqrt\varepsilon}(0')\backslash B_{\sqrt\varepsilon}(0')}|\bar u-\bar u(0')|^2dx'\big)^{1/2}\Big),
\end{equation}
where $C$ depends only on $n$, $\gamma$, $R_0$, $\|f\|_{C^{2,\gamma}(\Gamma^+)}$, $\|g\|_{C^{2,\gamma}(\Gamma^-)}$ and $\|\ln a(\xi)\|_{L^{\infty}(\bS^{n-2})}.$
\end{corollary}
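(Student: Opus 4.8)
The plan is to derive \eqref{rubaru1} and \eqref{rubaru2} directly from Lemmas~\ref{lemmaestu11} and \ref{lemmaestu22}: the only thing that has to be added is that the mean--square term $\big(\fint_{\Omega_{2\rho}\setminus\Omega_{\rho/2}}|u-\bar u(0')|^{2}\big)^{1/2}$ appearing on the right of Lemma~\ref{lemmaestu22} is $\le C\rho^{2}+C\big(\fint_{B_{2\rho}(0')\setminus B_{\rho/2}(0')}|\bar u-\bar u(0')|^{2}\,dx'\big)^{1/2}$ for $\sqrt\varepsilon\le\rho\le R_{0}$ (and $\le C\varepsilon+C\big(\fint_{B_{4\sqrt\varepsilon}\setminus B_{\sqrt\varepsilon}}|\bar u-\bar u(0')|^{2}\big)^{1/2}$ at the scale $\rho=2\sqrt\varepsilon$). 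To see this I split $u-\bar u(0')=(u-\bar u(x'))+(\bar u(x')-\bar u(0'))$. The second summand is easy: $\bar u$ is independent of $x_{n}$, and $\delta(x')=\varepsilon+f(x')-g(x')\simeq\varepsilon+a(\xi)|x'|^{2}$ varies only by a universal factor over each dyadic shell $\{\rho/2<|x'|<2\rho\}$, uniformly for $\varepsilon\le\rho^{2}$; hence the volumes of all such shells are comparable to $\rho^{\,n-1}(\varepsilon+\rho^{2})$, and
\[
\fint_{\Omega_{2\rho}\setminus\Omega_{\rho/2}}|\bar u(x')-\bar u(0')|^{2}\le C\fint_{B_{2\rho}(0')\setminus B_{\rho/2}(0')}|\bar u-\bar u(0')|^{2}\,dx'.
\]

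The real point is the first summand, which I would control by first establishing the uniform bound
\[
\|\partial_{n}u\|_{L^{\infty}(\Omega_{R_{0}})}\le C\|u\|_{L^{\infty}(\Omega_{2R_{0}})},
\]
i.e.\ that the derivative across the narrow gap does not blow up. This comes from combining the Neumann condition in \eqref{main_problem_narrow} — which gives $\partial_{n}u=\nabla_{x'}f\cdot\nabla_{x'}u$ on $\Gamma_{+}$ and $\partial_{n}u=\nabla_{x'}g\cdot\nabla_{x'}u$ on $\Gamma_{-}$, both $O(|x'|)\,\|\nabla_{x'}u\|$ because $\nabla_{x'}f(0')=\nabla_{x'}g(0')=0$ by \eqref{fg_1} — with $\partial_{nn}u=-\Delta_{x'}u$: integrating $\partial_{nn}u$ along a fibre at distance $\rho$ from $0'$ bounds $|\partial_{n}u|$ by $C\rho\,\|\nabla_{x'}u\|_{L^{\infty}}+C(\varepsilon+\rho^{2})\,\|\nabla^{2}_{x'}u\|_{L^{\infty}}$ on the surrounding shell, and the rescaled interior and (up to the $C^{2,\gamma}$ hypersurfaces $\Gamma_{\pm}$) boundary elliptic estimates for $u$, in the spirit of \eqref{est1} and Lemma~\ref{lemmaestu11}, give $\|\nabla_{x'}u\|_{L^{\infty}}\le C\rho^{-1}\|u\|_{L^{\infty}(\Omega_{2R_{0}})}$ and $\|\nabla^{2}_{x'}u\|_{L^{\infty}}\le C\rho^{-2}\|u\|_{L^{\infty}(\Omega_{2R_{0}})}$ there (with $\varepsilon+\rho^{2}\simeq\rho^{2}$). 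Consequently $\|u-\bar u(x')\|_{L^{\infty}(\{x'\}\times\mathrm{fibre})}\le\delta(x')\,\|\partial_{n}u\|_{L^{\infty}(\Omega_{R_{0}})}\le C(\varepsilon+|x'|^{2})$, which is $\le C\rho^{2}$ on the shell at scale $\rho\ge\sqrt\varepsilon$ and $\le C\varepsilon$ on $\Omega_{\sqrt\varepsilon}$.

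Putting the two estimates together with the shell comparability gives the desired bound for $\big(\fint_{\Omega_{2\rho}\setminus\Omega_{\rho/2}}|u-\bar u(0')|^{2}\big)^{1/2}$. For $x\in\Omega_{R_{0}}\setminus\Omega_{\sqrt\varepsilon}$ one applies Lemma~\ref{lemmaestu22} with $\rho=|x'|$ and inserts this bound, which gives \eqref{rubaru1}. For $x\in\Omega_{\sqrt\varepsilon}$, Lemma~\ref{lemmaestu11} gives $\|u-\bar u(0')\|'_{C^{1}(\Omega_{\sqrt\varepsilon})}\le C\|u-\bar u(0')\|_{L^{\infty}(\Omega_{2\sqrt\varepsilon})}$; since $u-\bar u(0')$ is harmonic in $\Omega_{2R_{0}}$ with vanishing Neumann data on $\Gamma_{+}\cup\Gamma_{-}$, the maximum principle forces this $L^{\infty}$ norm to be attained on the lateral boundary $\{|x'|=2\sqrt\varepsilon\}$, which lies in the closure of $\Omega_{2\sqrt\varepsilon}\setminus\Omega_{\frac{3}{2}\sqrt\varepsilon}$; applying Lemma~\ref{lemmaestu22} with $\rho=2\sqrt\varepsilon$ and then the bound above (with $\rho^{2}$ replaced by $\varepsilon$) gives \eqref{rubaru2}.

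The main obstacle is the uniform bound $\|\partial_{n}u\|_{L^{\infty}(\Omega_{R_{0}})}\le C$: from the a priori estimate $|\nabla u|\le C\varepsilon^{-1/2}$ alone one only gets $\|u-\bar u(x')\|_{L^{\infty}}\le C\varepsilon^{-1/2}\delta(x')\simeq C\sqrt\varepsilon$ near the origin, which is too crude to produce the sharp error terms $\varepsilon$ and $|x'|^{2}$; extracting the cancellation that keeps $\partial_{n}u$ bounded — namely $\nabla_{x'}f(0')=\nabla_{x'}g(0')=0$ against the degenerate geometry $\delta\simeq\rho^{2}$ — is the one step that genuinely requires care, and in particular the anisotropic second--order estimate for $u$ on the thin shells is where the flipping/rescaling argument is needed. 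The remaining ingredients (comparability of the shells, matching the annuli $B_{2|x'|}\setminus B_{|x'|/2}$ and $B_{4\sqrt\varepsilon}\setminus B_{\sqrt\varepsilon}$ to those produced by the two lemmas, and the $C^{1,\mu}$ term for $\bar u$, which Lemma~\ref{lemmaestu22} already bounds by the same mean--square quantity) are routine.
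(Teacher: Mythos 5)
Your overall structure matches the paper's: reduce both estimates to the pointwise bound $|u(x',x_n)-\bar u(x')|\le C(\varepsilon+|x'|^2)$, combine this with Lemmas~\ref{lemmaestu11} and \ref{lemmaestu22} on the appropriate annular shells, and for the inner region use the maximum principle for the harmonic function $u-\bar u(0')$ to pass to the shell at scale $2\sqrt\varepsilon$. All of that is exactly what the paper does, and your shell-comparability observation is fine.

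The divergence is in the key step, the bound $\|\partial_n u\|_{L^\infty(\Omega_{R_0})}\le C$. You correctly observe (as the paper does) that the Neumann condition combined with $\nabla_{x'}f(0')=\nabla_{x'}g(0')=0$ and the Bao--Li--Yin estimate $|\nabla u|\le C(\varepsilon+|x'|^2)^{-1/2}$ forces $|\partial_n u|\le C$ \emph{on} $\Gamma_\pm$. But to propagate this into the interior you integrate $\partial_{nn}u=-\Delta_{x'}u$ along fibres, which requires the anisotropic second-derivative estimate $\|\nabla^2_{x'}u\|_{L^\infty}\le C\rho^{-2}$ on each shell. You assert this follows ``in the spirit of \eqref{est1} and Lemma~\ref{lemmaestu11},'' but those, together with the $W^{2,p}$ step inside the proof of Lemma~\ref{lemmaestu22}, only produce $C^1$-level (at most $C^{1,\mu}$) estimates after the flipping; the flipped coefficients $\hat b^{ij}$ there are only controlled in $C^\mu$ (with $\rho$-dependent norms for higher derivatives, cf.\ \eqref{bij}). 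A genuine pointwise $\nabla^2_{x'}u$ bound would require an additional layer of the flipping argument with higher-regularity Schauder theory and careful tracking of how $\|\nabla b^{ij}\|\sim\rho^{-1}$ enters; it is not routine, and it is not done in the paper.

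The paper sidesteps this entirely with a one-line trick that you do not use: since $u$ is harmonic in $\widetilde\Omega$, so is $\partial_n u$. Therefore $\|\partial_n u\|_{L^\infty(\Omega_{R_0})}$ is attained on $\partial\Omega_{R_0}$ by the maximum principle, and the boundary data there is $\le C$ (the bound on $\Gamma_\pm$ you already derived, plus standard estimates on the lateral boundary $\{|x'|=R_0\}$, which is away from the neck). This requires only first-derivative information and avoids the thin-shell second-derivative estimate altogether. I recommend replacing your integration-along-fibres step with this observation; the remainder of your argument then goes through unchanged and coincides with the paper's.
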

\begin{proof}
Because $a(\xi)|x'|^2\in C^{\infty}(\bR^{n-1})$ and $\|u\|_{L^{\infty}(\Omega_{2R_0})}\leq1$, according to the classical elliptic theory, we have $\|\nabla u\|_{L^{\infty}(\Omega_{\frac{3}{2}R_0}\backslash\Omega_{\frac{1}{2}R_0})}\leq C.$ 
By virtue of Lemma \ref{lemmaestu11} and \ref{lemmaestu22}, we have $$|\nabla u(x)|\leq C(\varepsilon+|x'|^2)^{-1/2}, \quad\mbox{for}\, x\in\Omega_{R_0}.$$
Since $\partial_{\nu}u=0$ on $\Gamma_{\pm}$,
$$|\partial_n u(x)|\leq C|x'||\nabla_{x'}u(x)|\leq C|x'|(\varepsilon+|x'|^2)^{-1/2}\leq C,\quad\mbox{on}~\Gamma_{\pm}.$$
It is immediate from the harmonicity of $\partial_n u$ that 
\begin{equation}\label{partialnu} 
\|\partial_n u\|_{L^{\infty}(\Omega_{R_0})}\leq C.
\end{equation}
Therefore, $|u(x',x_{n})-\bar u(x')|\leq C(\varepsilon+|x'|^2)$, for $x\in\Omega_{R_0}$.
For $x\in\Omega_{R_0}\backslash \Omega_{\sqrt\varepsilon}$, 
\begin{align*}
&\big(\fint_{\Omega_{2|x'|}\backslash\Omega_{|x'|/2}}|u-\bar u(0')|^2dx\big)^{1/2}\\
\leq&\,\big(\fint_{\Omega_{2|x'|}\backslash\Omega_{|x'|/2}}|u-\bar u|^2dx\big)^{1/2}+ \big(\fint_{\Omega_{2|x'|}\backslash\Omega_{|x'|/2}}|\bar u-\bar u(0')|^2dx\big)^{1/2} \\
\leq&\, C|x'|^2+\big(\fint_{\Omega_{2|x'|}\backslash\Omega_{|x'|/2}}|\bar u-\bar u(0')|^2dx\big)^{1/2},
\end{align*}
which implies that \eqref{rubaru1} holds. By using the maximum principle, $\|u-\bar u(0')\|_{L^{\infty}(\Omega_{2\sqrt\varepsilon})}=\|u-\bar u(0')\|_{L^{\infty}(\partial{\Omega}_{2\sqrt\varepsilon}\backslash\Gamma_{\pm})}$. Then \eqref{rubaru2} follows from Lemma \ref{lemmaestu11} and \eqref{rubaru1}.
\end{proof}

\begin{proposition}\label{Thm1_upper1}
For $R_0\leq 1$, $n\geq3$ and $0<\gamma+t<2$, suppose $\bar v\in C^{1,\mu}(B_{R_0})$ is a solution to
\begin{equation}
\dv\Big(\Big(\varepsilon+a(\xi)|x'|^2\Big)\nabla{\bar v}\Big)=\partial_{i} F_1^{i}+\partial_{i} F_2^{i}, \quad in\, B_{R_0}\subset \bR^{n-1}+G,
\end{equation}
where $F_i$ and $G$, respectively, satisfy 
$$\Big\|\frac{F_1}{\varepsilon}\Big\|_{\varepsilon,1,1,B_{R_0}}<+\infty, \quad \|F_2\|_{\varepsilon,\gamma+t+1,1,B_{R_0}}<+\infty,\quad \|G\|_{\varepsilon,\alpha,1,B_{R_0}} <+\infty ,$$
and $a(\xi)$ is defined by \eqref{a_assumption}.
Then for $0<\rho<R\leq R_0,$ we have
\begin{equation*}
\begin{aligned}
&\Big(\fint_{\partial B_{\rho}}a(\xi)|\bar v-\bar v(0')|^2d\sigma\Big)^{1/2}\leq\, \Big(\frac{\rho}{R}\Big)^{\alpha(\lambda_1)} \Big(\fint_{\partial B_{R}}a(\xi)|\bar v-\bar v(0')|^2d\sigma\Big)^{1/2}+C\|G\|_{\varepsilon,\alpha,1,B_{R_0}}R^{\alpha}\\
&\quad+C(\frac{\sqrt\varepsilon}{R})^{\alpha(\lambda_1)}\|\bar v-\bar v(0')\|'_{C^{1,\mu}(B_{R}\backslash B_{\frac{3}{4}R})}+C\Big(\Big\|\frac{F_1}{\varepsilon}\Big\|_{\varepsilon,1,1,B_{R_0}}+\|F_2\|_{\varepsilon,\gamma+t+1,1,B_{R_0}}\Big)R^{\gamma+t},
\end{aligned}
\end{equation*}
where $\alpha(\lambda_1)$ is defined by \eqref{alpha_lambda_1}, and $C$ depends only on $n$, $\gamma$, $t$, $\alpha(\lambda_1)$ and $\|\ln a(\xi)\|_{L^{\infty}(\bS^{n-2})}$.
\end{proposition}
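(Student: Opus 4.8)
The plan is to localize on a single ball $B_R$ ($0<R\le R_0$) and write $\bar v-\bar v(0')=v_1+v_2+v_3+v_4$ there. Let $v_1$ solve the \emph{homogeneous degenerate} equation $\dv(a(\xi)|x'|^2\nabla v_1)=0$ in $B_R$ with $v_1=\bar v-\bar v(0')$ on $\partial B_R$; let $v_2$ solve $L_\varepsilon v_2:=\dv((\varepsilon+a(\xi)|x'|^2)\nabla v_2)=G$ with $v_2=0$ on $\partial B_R$; let $v_3$ solve $L_\varepsilon v_3=\partial_iF_1^i+\partial_iF_2^i$ with $v_3=0$ on $\partial B_R$; and let $v_4$ be the remainder. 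Since $L_0v_1=0$ forces $L_\varepsilon v_1=\varepsilon\Delta v_1$, the remainder satisfies $L_\varepsilon v_4=-\varepsilon\Delta v_1$ in $B_R$ with $v_4=0$ on $\partial B_R$. Because $\sum_i v_i(0')=(\bar v-\bar v(0'))(0')=0$, one has $\bar v-\bar v(0')=\sum_{i=1}^4(v_i-v_i(0'))$ on all of $B_R$, so the proposition reduces to estimating $\bigl(\fint_{\partial B_\rho}a|v_i-v_i(0')|^2\,d\sigma\bigr)^{1/2}$ for $i=1,2,3,4$.

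The term $v_1$ carries the sharp decay. By Lemma \ref{lemma_v1_gen}, $v_1(0')=(v_1)^a_{\partial B_\rho}$ and $\bigl(\fint_{\partial B_\rho}a|v_1-v_1(0')|^2\bigr)^{1/2}\le(\rho/R)^{\alpha(\lambda_1)}\bigl(\fint_{\partial B_R}a|v_1-v_1(0')|^2\bigr)^{1/2}$; since on $\partial B_R$ the function $v_1-v_1(0')$ equals $\bar v-\bar v(0')$ minus its own $a$-weighted average, that last factor is $\le\bigl(\fint_{\partial B_R}a|\bar v-\bar v(0')|^2\bigr)^{1/2}$, the leading term in the claim; and Lemma \ref{corov1} gives the companion bound $|v_1-v_1(0')|+|x'||\nabla v_1(x')|\le C(|x'|/R)^{\alpha(\lambda_1)}\|\bar v-\bar v(0')\|'_{C^{1,\mu}(B_R\setminus B_{\frac{3}{4}R})}$, which is the only input about $v_1$ needed later. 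For $v_2$, Lemma \ref{Linftyv4}(iv) yields $\|v_2\|_{L^\infty(B_R)}\le C\|G\|_{\varepsilon,\alpha,1,B_R}R^{\alpha}$, giving the second term; for $v_3$, split off the $F_1$- and $F_2$-parts and apply Lemma \ref{Linftyv4}(iii) (giving $\le C\|F_1/\varepsilon\|_{\varepsilon,1,1,B_R}R^2$) and Lemma \ref{Linftyv4}(i) with $\sigma=\gamma+t+1>1$ (giving $\le C\|F_2\|_{\varepsilon,\gamma+t+1,1,B_R}R^{\gamma+t}$), and since $0<\gamma+t<2$ and $R\le R_0\le1$ one has $R^2\le R^{\gamma+t}$, so both collapse into the last term. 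In each case the contribution of $v_i$ ($i=2,3$) to $\bigl(\fint_{\partial B_\rho}a|v_i-v_i(0')|^2\bigr)^{1/2}$ is $\le 2\|v_i\|_{L^\infty(B_R)}$ up to the harmless factor $\bigl(\fint_{\partial B_\rho}a\bigr)^{1/2}$.

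The real work is the bound $\|v_4\|_{L^\infty(B_R)}\le C(\sqrt\varepsilon/R)^{\alpha(\lambda_1)}\|\bar v-\bar v(0')\|'_{C^{1,\mu}(B_R\setminus B_{\frac{3}{4}R})}$ — i.e. that replacing $L_\varepsilon$ by the degenerate $L_0$ costs exactly the factor $(\sqrt\varepsilon/R)^{\alpha(\lambda_1)}$. The difficulty is that $v_1$ is only H\"older (not $C^2$) at the tip $0'$: differentiating $L_0v_1=0$ gives $a(\xi)|x'|^2\Delta v_1=-\nabla(a(\xi)|x'|^2)\cdot\nabla v_1$, so $|\varepsilon\Delta v_1|\le C\varepsilon|x'|^{-1}|\nabla v_1|\le C\varepsilon|x'|^{\alpha(\lambda_1)-2}R^{-\alpha(\lambda_1)}\|\bar v-\bar v(0')\|'_{C^{1,\mu}}$, which blows up at $0'$ and so does not lie in the weighted class of Lemma \ref{maximumv22} on all of $B_R$. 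I would resolve this by splitting at the scale $\sqrt\varepsilon$. On $B_R\setminus B_{\sqrt\varepsilon}$ the above pointwise bound gives $\|\varepsilon\Delta v_1\|_{\varepsilon,0,2-\alpha(\lambda_1)/2,B_R\setminus B_{\sqrt\varepsilon}}\le C\varepsilon R^{-\alpha(\lambda_1)}\|\bar v-\bar v(0')\|'_{C^{1,\mu}}$, and the pointwise barrier argument underlying Lemma \ref{maximumv22} (valid on this annulus) produces $C\varepsilon^{\alpha(\lambda_1)/2}R^{-\alpha(\lambda_1)}\|\bar v-\bar v(0')\|'_{C^{1,\mu}}$ plus $\|v_4\|_{L^\infty(\partial B_{\sqrt\varepsilon})}$. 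On $B_{\sqrt\varepsilon}$ the change of variables $x'=\sqrt\varepsilon z$ turns $L_\varepsilon$ into the \emph{uniformly elliptic} operator $\dv((1+a(\xi)|z|^2)\nabla_z\,\cdot\,)$ and the right-hand side into $\Delta_z\bigl(\varepsilon v_1(\sqrt\varepsilon z)\bigr)$, which by the above is bounded by $C\varepsilon^{\alpha(\lambda_1)/2}|z|^{\alpha(\lambda_1)-2}R^{-\alpha(\lambda_1)}\|\bar v-\bar v(0')\|'_{C^{1,\mu}}$ and hence lies in $L^q(B_1)$ for any $q<(n-1)/(2-\alpha(\lambda_1))$; since $\alpha(\lambda_1)>0$ one may take $q>(n-1)/2$, so the De Giorgi--Nash--Moser estimate again yields $C(\sqrt\varepsilon/R)^{\alpha(\lambda_1)}\|\bar v-\bar v(0')\|'_{C^{1,\mu}}$, modulo an $L^2$-norm of $v_4$ near $\partial B_{\sqrt\varepsilon}$. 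The two regimes are reconciled, and $\|v_4\|_{L^\infty(\partial B_{\sqrt\varepsilon})}$ absorbed, by the global energy estimate $\int_{B_R}(\varepsilon+a(\xi)|x'|^2)|\nabla v_4|^2\le\int_{B_R}\varepsilon^2|\nabla v_1|^2/(\varepsilon+a(\xi)|x'|^2)\le C\varepsilon^{\alpha(\lambda_1)+\frac{n-1}{2}}R^{-2\alpha(\lambda_1)}(\|\bar v-\bar v(0')\|'_{C^{1,\mu}})^2$ — the integral converging precisely because $2\alpha(\lambda_1)+n-3>0$ — combined with the weighted Caffarelli--Kohn--Nirenberg and Poincar\'e inequalities used above.

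I expect this last step — controlling the $\varepsilon$-perturbation term $v_4$, and within it the matching of the rescaled uniformly elliptic interior estimate with the $L_\varepsilon$-barrier exterior estimate through the energy bound — to be the main technical obstacle; everything else is a triangle inequality over the four pieces plus direct applications of Lemmas \ref{lemma_v1_gen}, \ref{corov1}, \ref{maximumv22} and \ref{Linftyv4}. If convenient one may first normalize $R=1$ by $x'\mapsto x'/R$, under which $\|\cdot\|'_{C^{1,\mu}}$ is scale invariant, $\|\cdot\|_{\varepsilon,\sigma,s,B_R}$ acquires an explicit power of $R$, and $\varepsilon$ is replaced by $\varepsilon/R^2$; the $R$-powers in the conclusion then reappear upon scaling back.
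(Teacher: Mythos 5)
Your decomposition is essentially the same as the paper's (you combine the paper's $v_4$ and $v_5$ into one piece, but that is cosmetic), and your treatment of $v_1$ (via Lemmas~\ref{lemma_v1_gen} and~\ref{corov1}), of the $G$-term (via Lemma~\ref{Linftyv4}(iv)), and of the $F_1,F_2$-terms (via Lemma~\ref{Linftyv4}(iii),(i)) is exactly what the paper does, down to the observation that the $\partial B_R$ factor for $v_1$ is bounded by the $a$-weighted $L^2$ norm of $\bar v-\bar v(0')$ since $v_1-v_1(0')$ has zero $a$-average.

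The genuine gap is the treatment of your $v_4$ (the paper's $v_2$), the piece driven by $-\varepsilon\Delta v_1$. The paper does not split at scale $\sqrt\varepsilon$ at all: it rewrites
\[
\varepsilon\Delta v_1
=L_\varepsilon\Bigl(\tfrac{\varepsilon}{\varepsilon+a(\xi)|x'|^2}\,\hat v_1\Bigr)
+\dv\Bigl[\tfrac{\varepsilon\,\nabla\bigl(a(\xi)|x'|^2\bigr)}{\varepsilon+a(\xi)|x'|^2}\,\hat v_1\Bigr],\qquad \hat v_1:=v_1-v_1(0'),
\]
moves the $L_\varepsilon$ term to the left, and then applies Lemma~\ref{maximumv22} to $v_2+\tfrac{\varepsilon}{\varepsilon+a|x'|^2}\hat v_1$. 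The crucial point is that the new right-hand side involves $\hat v_1$ and $\nabla\hat v_1$ (which satisfy $|\hat v_1|\le C(|x'|/R)^{\alpha(\lambda_1)}\|\cdot\|'$ and $|x'||\nabla\hat v_1|\le C(|x'|/R)^{\alpha(\lambda_1)}\|\cdot\|'$) but not $\Delta v_1$; the $|x'|^{\alpha(\lambda_1)-2}$ singularity you identified never appears, so a single application of the barrier lemma on all of $B_R$ gives the factor $(\sqrt\varepsilon/R)^{\alpha(\lambda_1)}$ cleanly. You missed this identity, and the two-scale substitute you propose has real holes: (i) the outer barrier bound needs $\|v_4\|_{L^\infty(\partial B_{\sqrt\varepsilon})}$, and the inner rescaled local-boundedness estimate needs an $L^2$ norm of $v_4$ on an annulus near $\partial B_{\sqrt\varepsilon}$; these feed into each other circularly, and a global energy estimate plus a (weighted or unweighted) Poincar\'e inequality over $B_R$ loses a factor of $R$ relative to $\sqrt\varepsilon$ and does not close the loop by itself, so an additional iteration or a sharper interpolation step is required but not supplied; (ii) your claimed magnitude of the energy integral, $C\varepsilon^{\alpha(\lambda_1)+\frac{n-1}{2}}R^{-2\alpha(\lambda_1)}$, is incorrect for $n\ge 5$ (and for $n=4$ when $\alpha(\lambda_1)>1/2$): in that range the annulus $\sqrt\varepsilon<|x'|<R$ contributes the larger quantity $C\varepsilon^2R^{2\alpha(\lambda_1)+n-5}\cdot R^{-2\alpha(\lambda_1)}=C\varepsilon^2R^{n-5}$, since $\int_{\sqrt\varepsilon}^R r^{2\alpha(\lambda_1)+n-6}\,dr$ is then dominated by its upper limit, not its lower one. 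To your credit you flag this as the main obstacle; but as written it is a gap rather than a different complete proof, and the single identity above is what the paper uses to avoid the entire difficulty.
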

\begin{proof}
Denote $L_{\varepsilon}v:=\dv[(\varepsilon+a(\xi)|x'|^2)\nabla v]$. We divide $\bar v:= v_1+v_2+v_3+v_4+v_5$ in $B_R$, where $v_1\in H^1(B_{R_0},|x'|^2dx')$ satisfies
\begin{equation*}
\left\{
\begin{aligned}
\dv(a(\xi)|x'|^2\nabla v_1)&=0 \quad \mbox{in}~ B_R,\\
v_1&=\bar v \quad \mbox{on}~ \partial B_R,
\end{aligned}
\right.
\end{equation*}
$v_2$ satisfies 
\begin{equation}\label{equv22}
\left\{
\begin{aligned}
L_{\varepsilon}v_2&=-\varepsilon\Delta v_1(x') \quad \mbox{in}~ B_R,\\
v_2&=0 \quad\quad\quad\quad\quad \mbox{on}~ \partial B_R,
\end{aligned}
\right.
\end{equation} 
$v_3$ satisfies 
\begin{equation}\label{equv22}
\left\{
\begin{aligned}
L_{\varepsilon}v_3&=G \quad \mbox{in}~ B_R,\\
v_3&=0 \quad\quad\quad\quad\quad \mbox{on}~ \partial B_R,
\end{aligned}
\right.
\end{equation} 
and $ v_4,v_5$, respectively, satisfy
\begin{equation*}
\left\{
\begin{aligned}
L_{\varepsilon} v_4&=\dv F_1 \quad \mbox{in}~ B_R,\\
v_4&=0  \quad\quad\quad \mbox{on}~ \partial B_R,
\end{aligned}
\right.
\quad\left\{
\begin{aligned}
L_{\varepsilon} v_5&=\dv F_2 \quad \mbox{in}~ B_R,\\
v_5&=0  \quad\quad\quad \mbox{on}~\partial B_R.
\end{aligned}
\right.
\end{equation*}

For $v_1$, by using Lemma \ref{lemma_v1_gen}, 
\begin{equation}\label{estimatev1} 
\Big( \fint_{\partial B_\rho} a(\xi)|v_1(x') - v_1(0)|^2 \, d\sigma \Big)^{1/2} 
\leq \Big(\frac{\rho}{R} \Big)^{\alpha(\lambda_1)} \Big( \fint_{\partial B_R} a(\xi) |v_1(x') - v_1(0)|^2 \, d\sigma \Big)^{1/2}.
\end{equation}
Using Lemma \ref{corov1}, we have, for $|x'|\leq R$
\begin{equation}\label{estv111}
\begin{aligned}
|v_1(x')-v_1(0')|+|x'||\nabla v_1(x')|\leq&\, C\Big(\frac{|x'|}{R }\Big)^{\alpha(\lambda_1)}\|\bar v-(\bar v)_{\partial B_{R}}^a\|'_{C^{1,\mu}(B_{R}\backslash B_{\frac{3}{4}R})},\\
\end{aligned}
\end{equation}
which implies $v_1\in H^{1}(B_R, dx').$ Similarly, we have $v_i\in H^{1}(B_R)$, for $i=2,3,4$.

Let $\hat v_1(x'):=v_1(x')-v_1(0')$. Note that 
\begin{equation}\label{epsilonv1}
\varepsilon\Delta v_1=\dv(\varepsilon\nabla\hat v_1)=L_{\varepsilon}\Big(\frac{\varepsilon}{\varepsilon+a(\xi)|x'|^2}\hat v_1(x')\Big)+\dv\Big[\frac{\varepsilon\nabla a(\xi)|x'|^2}{\varepsilon+a(\xi)|x'|^2}\hat v_1(x')\Big]. 
\end{equation}
Combining this with \eqref{epsilonv1}, we can rewrite \eqref{equv22} as 
\begin{equation}
\left\{
\begin{aligned}
L_{\varepsilon}\Big(v_2+\frac{\varepsilon}{\varepsilon+a(\xi)|x'|^2} \hat v_1\Big)&=-\dv\Big[\frac{\varepsilon\nabla a(\xi)|x'|^2}{\varepsilon+a(\xi)|x'|^2} \hat v_1\Big] \quad\quad\quad \mbox{in}~ B_R,\\
v_2+\frac{\varepsilon}{\varepsilon+a(\xi)|x'|^2} \hat v_1&=\frac{\varepsilon}{\varepsilon+a(\xi)R^2}\Big(\bar v-(\bar v)_{\partial B_R}^a\Big) \quad \mbox{on}~ \partial B_R.
\end{aligned}
\right.
\end{equation}  
Using \eqref{estv111}, we obtain
\begin{align*}
&\left|\dv\Big[\frac{\varepsilon\nabla a(\xi)|x'|^2}{\varepsilon+a(\xi)|x'|^2} \hat v_1\Big]\right|\leq\, C \frac{\varepsilon|x'|^{\alpha(\lambda_1)}}{\varepsilon+a(\xi)|x'|^2}{R^{-\alpha(\lambda_1)}} {\|\bar v-(\bar v)_{\partial B_{R}}^a\|'_{C^{1,\mu}(B_{R}\backslash B_{\frac{3}{4}R})}}\nonumber\\
&\quad\quad\quad\quad\quad\quad\leq\, C \varepsilon   {R^{-\alpha(\lambda_1)}} {\|\bar v-(\bar v)_{\partial B_{R}}^a\|'_{C^{1,\mu}(B_{R}\backslash B_{\frac{3}{4}R})}}(\varepsilon+a(\xi)|x'|^2)^{\frac{\alpha(\lambda_1)}{2}-1}.
\end{align*}
By utilizing Lemma \ref{maximumv22}, \eqref{estv111}, and the H\"older inequality, 
we can derive
\begin{equation}\label{estv22} 
\|v_2\|_{L^{\infty}(B_R)}\leq C \Big(\frac{\sqrt{\varepsilon}}{R }\Big)^{\alpha(\lambda_1)}\|\bar v-(\bar v)_{\partial B_{R}}^a\|'_{C^{1,\mu}(B_{R}\backslash B_{\frac{3}{4}R})}.
\end{equation}

From Lemma \ref{Linftyv4}, it is immediate that 
\begin{equation}\label{estv55}
\|v_3\|_{L^{\infty}(B_R)}\leq C\Big\|G\Big\|_{\varepsilon, \alpha,1,B_{R_0}}R^2 \leq C\Big\|G\Big\|_{\varepsilon,\alpha,1,B_{R_0}}R^{\alpha},
\end{equation}

\begin{equation}\label{estv33}
\|v_4\|_{L^{\infty}(B_R)}\leq C\Big\|\frac{F_1}{\varepsilon}\Big\|_{\varepsilon,1,1,B_{R_0}}R^2 \leq C\Big\|\frac{F_1}{\varepsilon}\Big\|_{\varepsilon,1,1,B_{R_0}}R^{\gamma+t},
\end{equation}
and
\begin{equation}\label{estv44}
\|v_5\|_{L^{\infty}(B_R)}\leq C\|F_2\|_{\varepsilon,\gamma+t+1,1,B_{R_0}}R^{\gamma+t}.
\end{equation}
Then, by combining \eqref{estimatev1}, \eqref{estv22},\eqref{estv55}, \eqref{estv33} and \eqref{estv44}, we obtain
\begin{align*}
&\Big(\fint_{\partial B_{\rho}}a(\xi)|\bar v-\bar v(0')|^2d\sigma\Big)^{1/2}\\
\leq&\, \Big(\fint_{\partial B_{\rho}}a(\xi)|v_1-v_1(0')|^2d\sigma\Big)^{1/2}+C\sum_{i=2}^5\|v_i\|_{L^{\infty}(B_R)}\\
\leq&\, \Big(\frac{\rho}{R}\Big)^{\alpha(\lambda_1)} \Big(\fint_{\partial B_{R}}a(\xi)\Big|\bar v-(\bar v)_{\partial B_R}^a\Big|^2d\sigma\Big)^{1/2}+C\Big(\frac{\sqrt\varepsilon}{R}\Big)^{\alpha(\lambda_1)}\|\bar v- (\bar v)_{\partial B_R}^a\|'_{C^{1,\mu}(B_{R}\backslash B_{\frac{3}{4}R})}\\
&\quad\quad+C\Big(\Big\|\frac{F_1}{\varepsilon}\Big\|_{\varepsilon,1,1,B_{R_0}}+\|F_2\|_{\varepsilon,\gamma+t+1,1,B_{R_0}}\Big)R^{\gamma+t}+C\|G\|_{\varepsilon,\alpha,1,B_{R_0}}R^{\alpha}.
\end{align*}
By using the fact that $\Big(\fint_{\partial B_{R}}a(\xi)\Big|\bar v-(\bar v)_{\partial B_R}^a\Big|^2d\sigma\Big)^{1/2}\leq\Big(\fint_{\partial B_{R}}a(\xi)|\bar v - \bar v(0')|^2d\sigma\Big)^{1/2}$ and $ \|\bar v- (\bar v)_{\partial B_R}^a\|'_{C^{1,\mu}(B_{R}\backslash B_{\frac{3}{4}R})}\leq C\|\bar v- \bar v(0')\|'_{C^{1,\mu}(B_{R}\backslash B_{\frac{3}{4}R})}$, we complete the proof.
\end{proof}
\begin{proof}[Proof of Theorem \ref{Thm1_upper}] 
Without loss of generality, we can assume that $\|u\|_{L^{\infty}(\Omega_{2R_0})}\leq 1$. Denote 
$$\omega(\rho)=\Big(\fint_{\partial B_{\rho}}a(\xi)|\bar u-\bar u(0')|^2d\sigma\Big)^{1/2}.$$
It has been shown in the proof of Theorem 1.3 in \cite{DLY2} that
\begin{equation*}
\omega(\rho)\leq C(\alpha)(\varepsilon+\rho^2)^{\alpha/2},\quad \forall\,\alpha<\alpha(\lambda_1),
\end{equation*}
where $C(\alpha)$ is a constant depending on $ \alpha(\lambda_1)-\alpha.$
Now we choose a fixed $\tilde\alpha$ such that $\tilde\alpha<\alpha(\lambda_1)$ and $\frac{\gamma}{2}+\tilde\alpha>\alpha(\lambda_1)$.
By Corollary \ref{rubaru}, we obtain
\begin{equation}\label{estlemma1}
\| u-\bar u(0')\|'_{C^1(\Omega_{\sqrt{\varepsilon}})}\leq C {\varepsilon}^{\tilde\alpha/2},
\end{equation}
and 
\begin{equation}\label{estlemma2}
\| u-\bar u(0')\|_{C^1(\Omega_{\rho}\backslash\Omega_{\frac{3}{4}\rho})}'+ \|\bar u-\bar u(0')\|_{C^{1,\mu}(B_{\rho}\backslash B_{\frac{3}{4}\rho})}'\leq C\rho^{\tilde\alpha},\quad\mbox{for}\,\, \sqrt{\varepsilon}\leq \rho\leq R_0.
\end{equation}

Recall that $\bar u$ is the solution to \eqref{barv3}:
\begin{align*}
\partial_i\Big(\Big(\varepsilon+a(\xi)|x'|^2\Big)\partial_i\bar u\Big)&=-\partial_iF^{i}=-\partial_iF_1^{i}-\partial_iF_2^{i},
\end{align*} 
where $F_1^{i}$ and $F_2^{i}$ are defined by \eqref{F1i} and \eqref{F2i}.
By using \eqref{estlemma1}, \eqref{estlemma2} and \eqref{partialnu}, we have
$$|F_1^i(x')|\leq C\varepsilon|x'|, \quad |F_2^i(x')|\leq C|x'|^{\gamma+\tilde\alpha+1}.$$

By applying Proposition \ref{Thm1_upper1}, we obtain, for $0<\rho<R<R_0$,
\begin{equation}\label{estomegaR}
\omega(\rho)\leq\Big(\frac{\rho}{R}\Big)^{\alpha(\lambda_1)}\omega(R)+C\Big(\frac{\sqrt\varepsilon}{R}\Big)^{\alpha(\lambda_1)}\|\bar u-\bar u(0')\|'_{C^{1,\mu}(B_{R}\backslash B_{\frac{3}{4}R})}+CR^{\gamma+\tilde\alpha}.
\end{equation}
Thus, for $\sqrt{\varepsilon}^{\frac{\alpha(\lambda_1)}{\alpha(\lambda_1)+\frac{\gamma}{2}}}\leq R <R_0$,
\begin{equation*}
\omega(\rho)\leq\Big(\frac{\rho}{R}\Big)^{\alpha(\lambda_1)}\omega(R)+CR^{\frac{\gamma}{2}}\|\bar u-\bar u(0')\|'_{C^{1,\mu}(B_{R}\backslash B_{\frac{3}{4}R})}+CR^{\gamma+\tilde\alpha}.
\end{equation*}
Taking advantage of \eqref{estlemma2}, we conclude
\begin{equation*} 
\omega(\rho)\leq\Big(\frac{\rho}{R}\Big)^{\alpha(\lambda_1)}\omega(R)+CR^{\frac{\gamma}{2}+\tilde\alpha},\quad\mbox{for}\,\,  \sqrt{\varepsilon}^{\frac{\alpha(\lambda_1)}{\alpha(\lambda_1)+\frac{\gamma}{2}}}\leq R <R_0.
\end{equation*}
Because $\frac{\gamma}{2}+\tilde\alpha>\alpha(\lambda_1)$, through a standard iteration and considering the fact that $\omega(R_0)\leq C$, we can derive
\begin{equation}\label{estomegarho}
\omega(\rho)\leq C\rho^{\alpha(\lambda_1)}, \quad\mbox{for}\,\, \sqrt{\varepsilon}^{\frac{\alpha(\lambda_1)}{\alpha(\lambda_1)+\frac{\gamma}{2}}}\leq\rho\leq R_0.
\end{equation}
According to Corollary \ref{rubaru}, for $R_{\varepsilon}:=2\sqrt{\varepsilon}^{\frac{\alpha(\lambda_1)}{\alpha(\lambda_1)+\frac{\gamma}{2}}}$ we have
\begin{equation}\label{estRepsilon}
\|\bar u-\bar u(0')\|'_{C^{1,\mu}(B_{R_{\varepsilon}}\backslash B_{\frac{3}{4}R_{\varepsilon}})}\leq C R_{\varepsilon}^{\alpha{(\lambda_1)}}.
\end{equation}

By substituting \eqref{estomegarho} and \eqref{estRepsilon} into \eqref{estomegaR} and setting $R=R_{\varepsilon}$, we can derive
\begin{equation*}
\omega(\rho)\leq C\rho^{\alpha(\lambda_1)}+C\varepsilon^{\frac{\alpha(\lambda_1)}{2}}+C\varepsilon^{\frac{\alpha(\lambda_1)}{2} \frac{\gamma+\tilde\alpha}{\frac{\gamma}{2}+\alpha(\lambda_1)}},\quad\mbox{for}~ 0<\rho<R_{\varepsilon}.
\end{equation*}
Note that $\gamma+\tilde\alpha>\frac{\gamma}{2}+\alpha(\lambda_1)$, so
\begin{equation}\label{estomegarho1}
\omega(\rho)\leq C\rho^{\alpha(\lambda_1)}+C\varepsilon^{\frac{\alpha(\lambda_1)}{2}},\quad\mbox{for}~ 0<\rho<R_{\varepsilon}.
\end{equation}
From \eqref{estomegarho} and \eqref{estomegarho1}, we can deduce
\begin{equation}\label{estuoptimal}
\omega(\rho)\leq C(\varepsilon+\rho^2)^{\frac{\alpha(\lambda_1)}{2}},\quad\mbox{for}~0<\rho<R_0.
\end{equation}
By combining this with Corollary \ref{rubaru}, the proof is completed.
\end{proof}

\section{Proof of Theorem \ref{Thm3_lower}}\label{sec4}

In this section, the constants $C_i$, $i=0,1,2,3,4 $, $C$ and $r_0$ depend only on the norms $\|\partial D_1\|_{C^4}$, $\|\partial D_2\|_{C^4}$, $\|\ln a(\xi)\|_{L^{\infty}(\bS^{n-2})}$, $\alpha(\lambda_1)$ and $n$, but are independent of $\varepsilon$.  

\begin{proof}[Proof of Theorem \ref{Thm3_lower}]
 After a suitable rotation in $\mathbb{R}^{n-1}$, we can assume without loss of generality that 
$$(f-g)(x')=\sum_{i=1}^{n-1}a_ix_i^2+E(x'),$$
where $|E(x')|\leq C|x'|^4$.
Set 
$$\bar u(x')=\fint_{g(x')}^{\varepsilon+f(x')}u(x',x_n)dx_n.$$ Then, similar to Section \ref{sec3}, we have
\begin{equation*}
\dv\Big[\Big(\varepsilon+\sum_{i=1}^{n-1}a_{i}x_{i}^{2}\Big)\nabla \bar u\Big]=-\partial_i F^i(x') \quad in\,B_1\in \bR^{n-1},
\end{equation*}
where $ F^i(x')=\int_{g(x')}^{\varepsilon+f(x')} \Big(\frac{x_n-\varepsilon-f(x')}{\delta(x')}\partial_{i}g(x')-\frac{x_n-g(x')}{\delta(x')}\partial_{i}f(x')\Big)\partial_{n}u(x)\,dx_n+O(|x'|^4)\partial_i\bar{u}(x').$

\textbf{Step 1.} We write $\bar u=u_1+u_2$ in $B_1$, where $u_1$ satisfies 
\begin{equation*}
\left\{
\begin{aligned}
\dv\Big[\Big(\sum_{i=1}^{n-1}a_{i}x_{i}^{2}\Big)\nabla u_1\Big]&=\partial_i F^i_1(x')\quad \mbox{in}~ B_1\subset R^{n-1},\\
u_1&=\bar u ~\quad\quad \mbox{on}~\partial B_1,
\end{aligned}
\right.
\end{equation*} 
where $F^i_1(x')=\int_{g(x')}^{f(x')} \Big(\frac{x_n-\varepsilon-f(x')}{\delta(x')}\partial_{i}g(x')-\frac{x_n-g(x')}{\delta(x')}\partial_{i}f(x')\Big)\partial_{n}u(x)\,dx_n+O(|x'|^4)\partial_i\bar{u}(x')$,
and $u_2$ satisfies
\begin{equation*}
\left\{
\begin{aligned}
\dv\Big[\Big(\varepsilon+\sum_{i=1}^{n-1}a_{i}x_{i}^{2}\Big)\nabla u_2\Big]&=\partial_i F^i_2(x')-\varepsilon\Delta u_1(x')\quad \mbox{in}~ B_1\subset\bR^{n-1},\\
u_2&=0 \quad\quad\quad\quad\quad\quad \mbox{on}~\partial B_1,
\end{aligned}
\right.
\end{equation*} 
where $F^i_2(x')=\int_{f(x')}^{\varepsilon+f(x')}\big(\frac{x_n-\varepsilon-f(x')}{\delta(x')}\partial_{i}g(x')-\frac{x_n-g(x')}{\delta(x')}\partial_{i}f(x')\big)\partial_{n}u(x)\,dx_n.$

By a direct calculation, we obtain
\begin{equation}\label{estF1}
|F_1^i(x')|\leq C_0|x'|^{\alpha(\lambda_1)+2}.
\end{equation}
Using the standard regularity theory, we have $\|\bar u\|_{C^{2}(B_1(0')\backslash B_{3/4}(0'))}\leq C_0.$ According to Proposition 2.1 in \cite{DLY2}, we know
\begin{equation*}
|u_1(x')-u_1(0')|\leq C_0|x'|^{\alpha(\lambda_1)}.
\end{equation*} 
Then, by a similar argument as in Corollary \ref{corov1}, for $|x'|\neq0$, we have
\begin{equation*}
|u_1(x')-u_1(0')|+|x'||\nabla u_1(x')|\leq C_0|x'|^{\alpha(\lambda_1)}.
\end{equation*}

It is obvious that $|F_2^i(x')|\leq C_0\varepsilon|x'|.$ We further decompose $u_2=u_{21}+u_{22}$ in $B_1(0')$ such that $u_{21}\in H_0^{1}(B_1(0'))$ and satisfies
\begin{equation*}
\dv\Big[\Big(\varepsilon+\sum_{i=1}^{n-1}a_{i}x_{i}^{2}\Big)\nabla u_{21}\Big]=\partial_i F^i_2(x')\quad in\, B_1\subset\bR^{n-1}.
\end{equation*}
Since $|F_2^i(x')|\leq C_0\varepsilon|x'|$, which implies $\|\frac{F_2}{\sqrt\varepsilon}\|_{\varepsilon,1,1,1}\leq C_0\sqrt\varepsilon$, by Lemma \ref{Linftyv4}, we have $$\|u_{21}\|_{L^{\infty}(B_1(0'))}\leq C_0\sqrt\varepsilon.$$
Then $u_{22}\in H_0^1(B_1(0'))$ is a solution to
\begin{equation*}
\dv\Big[\Big(\varepsilon+\sum_{i=1}^{n-1}a_{i}x_{i}^{2}\Big)\nabla u_{22}\Big]=-\varepsilon\Delta u_1(x').
\end{equation*}
By using a similar argument to derive \eqref{estv22}, we obtain
$$\|u_{22}\|_{L^{\infty}(B_1(0'))}\leq C_0\varepsilon^{\frac{\alpha(\lambda_1)}{2}}.$$
So that
\begin{equation}\label{equu2}
\|u_2\|_{L^{\infty}(B_1)}\leq \|u_{21}\|_{L^{\infty}(B_1)}+\|u_{22}\|_{L^{\infty}(B_1)} \leq C_0\varepsilon^{\frac{\alpha(\lambda_1)}{2}}.
\end{equation}

We  also have
$$\left|\fint_{\bS^{n-2}} a(\xi)  ( u_2(r,\xi)-u_2( 0'))Y_{1,j}(\xi)\, d\xi\right|\leq C_0\varepsilon^{\frac{\alpha(\lambda_1)}{2}},$$ 
and
$$\Big(\fint_{\partial B_r} a(\xi)  ( u_2(r,\xi)-u_2( 0'))^2\, d\xi\Big)^{1/2}\leq C_0\varepsilon^{\frac{\alpha(\lambda_1)}{2}}\quad \forall\, r\in(0,1),$$
where $Y_{k,i}$ is the normalized eigenfunction corresponding to $k$-th nonzero eigenvalue $\lambda_k$ of the problem \eqref{SL_problem}, and $\{Y_{k,i}\}_{k,i}$ forms an orthonormal basis of $L^2(\bS^{1})$ under the inner product \eqref{inner_product}. By assumption, we can use $Y_{1,j}$ to denote the eigenfunction which is odd in $x_j$. Therefore, $Y_{1,j}$ is the eigenfunction corresponding to the first nonzero eigenvalue of $\lambda_1$ of \eqref{SL_problem} in the half sphere $\bS^{n-2} \cap \{x_j > 0 \}$ with zero Dirichlet boundary condition. This implies that $\lambda_1$ is simple and $Y_{1,j}$ does not change its sign in the half sphere. Without loss of generality, we assume $Y_{1,j}>0$ in $\{x_j >0\}$.

\textbf{step 2.} 
For $u_1$, we decompose it as follows:
\begin{equation*}
 u_1(x')-u_1(0')=U_0(r)Y_0+\sum_{k=1}^\infty \sum_{i=1}^{N(k)} U_{k,i}(r)Y_{k,i}(\xi) \quad x' \in B_1\setminus\{0\},
\end{equation*}
where $U_{k,i}(r) = \fint_{\bS^{1}} a(\xi)  (u_1(r,\xi)-u_1(0')) Y_{k,i}(\xi) \, d\xi$ and $U_{k,i} \in C([0,1)) \cap C^\infty((0,1))$. Then $U_{1,j}$ satisfies $U_{1,j}(0) = 0$ and
\begin{equation*}
\label{U_11_equation}
LU_{1,j}:= r^2U_{1,j}''(r) + nr U_{1,j}'(r) - \lambda_1 U_{1,j}(r) = H(r), \quad 0 < r <1,
\end{equation*}
where
\begin{align*}
H(r) &= \fint_{\bS^{n-2}} Y_{1,j}(\xi) \dv F_1 d\xi = \fint_{\bS^{n-2}}  \Big(\partial_r F_1 + \frac{1}{r} \dv_\xi F_1\Big) Y_{1,j}(\xi) \, d\xi\\
&= \partial_r \Big(\fint_{\bS^{n-2}}   F_1 Y_{1,1}(\xi) \, d\xi \Big) - \frac{1}{r}\fint_{\bS^{n-2}}F_1\dv_{\xi}Y_{1,j}(\xi) d\xi\\
&=: A'(r) + B(r), \quad 0 < r < 1,
\end{align*}
and $A(r),B(r) \in C^1([0,1))$ satisfy, in view of \eqref{estF1}, that
\begin{equation}
\label{AB_bounds}
|A(r)| \le C(n)r^{2+\alpha(\lambda_1)}, \quad |B(r)| \le C(n)r^{1+\alpha(\lambda_1)}, \quad 0 < r < 1.
\end{equation}

\textbf{step 3.}
We will show, for some constant $\tilde C_1\geq C_1>0$, that
\begin{equation}
\label{U_11_formula}
U_{1,j}(r) = \tilde C_1 r^{\alpha(\lambda_1)} + v(r), \quad 0<r<1,
\end{equation}
where $|v(r)| \le C_2r^{1+\alpha(\lambda_1)}$.
We use the method of reduction of order to find a bounded solution $v$ satisfying $Lv = H$ in $(0,1)$, and then show that  $|v(r)| \le C_2r^{1+\alpha(\lambda_1)}$. Note that $h = r^{\alpha(\lambda_1)}$ is a solution of $Lh = 0$. Let $v = hw$ and
$$
w(r) := \int_0^r\frac{1}{s^{n+2\alpha(\lambda_1)}} \int_0^s \tau^{n-2+\alpha(\lambda_1)} H(\tau) \, d\tau ds, \quad 0 < r < 1.
$$
By a direct calculation,
$$
Lv = L(hw) = h r^2w'' + \Big( 2r^2h' +  nrh \Big)w' =  H.
$$
By \eqref{AB_bounds}, we  estimate $|w(r)| \le C_2r$. Hence, $|v(r)| \le C_2r^{1+\alpha(\lambda_1)}$.
Since $U_{1,j} - v$ is bounded and satisfies $L(U_{1,j} - v) = 0$ in $(0,1)$, we have $U_{1,j} = \tilde C_1h + v$ and \eqref{U_11_formula} follows. 

Next, we prove that $\tilde C_1\geq C_1>0$, where $C_1$ depends only on $n$, $\|\partial{D_1}\|_{C^4}$ and $\|\partial D_2\|_{C^4}$, not on $\varepsilon.$ 
Since $D_1$ and $D_2$ are strictly convex and symmetric in $x_1,\ldots,x_{n-1}$, it is easy to show that $\partial_\nu x_j \ge 0$ in $\{x_j \ge 0\}$ and $\partial_\nu x_j \le 0$ in $\{x_j \le 0\}$. Therefore, under the assumptions of Theorem \ref{Thm3_lower}, $x_j$ is a subsolution of \eqref{equkzero} in $\{x_j \ge 0\}$, and is a supersolution of \eqref{equkzero} in $\{x_j \le 0\}$. Hence, $u \ge x_j$ in $\{x_j \ge 0\}$ and $u \le x_j$ in $\{x_j \le 0\}$. Then, $u(0',x_n)=0$ and $|u(x',x_n)|\geq x_j$ for $(x',x_n)\in\Omega$. So, $|\bar{u}(x')| \ge |x_j|$ in $B_1 \subset \bR^{n-1}$ and $\bar u(0')=0$. Since $Y_{1,j}$ has the same sign as $x_j$, we have
\begin{equation}\label{equubar}
\fint_{\bS^{n-2}} a(\xi) \bar u(r,\xi)Y_{1,j}(\xi)=\fint_{\bS^{n-2}} a(\xi) (\bar u(r,\xi)-\bar u(0'))Y_{1,j}(\xi)\geq C_3r,
\end{equation}
where $C_3\geq 0.$
Using \eqref{equu2} and \eqref{equubar}, we have
\begin{equation*}
\begin{aligned}
U_{1,j}(r)& = \fint_{\bS^{n-2}} a(\xi)  (u_1(r,\xi)-u_1(0')) Y_{1,j}(\xi) \, d\xi \\
& = \fint_{\bS^{n-2}} a(\xi)  (\bar u(r,\xi)-\bar u(0'))  Y_{1,j}(\xi)\, d\xi- \fint_{\bS^{n-2}} a(\xi)  ( u_2(r,\xi)-u_2( 0'))Y_{1,j}(\xi)\, d\xi \\
&\geq C_3r-C_0\varepsilon^{\frac{\alpha(\lambda_1)}{2}}.
\end{aligned}
\end{equation*}
By virtue of \eqref{U_11_formula} and $|v(r)|\leq C_2r^{1+\alpha(\lambda_1)}$, we have
\begin{equation}\label{U1j1}
\tilde C_1r^{\alpha(\lambda_1)}\geq -C_0\varepsilon^{\frac{\alpha(\lambda_1)}{2}}-C_2r^{1+\alpha(\lambda_1)}+C_3r.
\end{equation}
We choose a small $r_0$ satisfying
\begin{equation} \label{U1j2}
-C_0\varepsilon^{\frac{\alpha(\lambda_1)}{2}}-C_2r_0^{1+\alpha(\lambda_1)}+C_3r_0\geq \frac{C_3}{2}r_0.
\end{equation}
By \eqref{U1j1} and \eqref{U1j2}, we obtain that $\tilde C_1\geq \frac{C_3}{2}r_0^{1-\alpha(\lambda_1)}$.

\textbf{step 4.}
Firstly, we choose a positive constant $C_4$ such that $\frac{C_1C_4^{\alpha(\lambda_1)}}{4}>C_0$. Without loss of generality, we suppose that 
\begin{equation}\label{C42}
C_1(C_4\sqrt\varepsilon)^{\alpha(\lambda_1) }-C_2(C_4\sqrt\varepsilon)^{1+\alpha(\lambda_1)}\geq\frac{C_1}{2}(C_4\sqrt\varepsilon)^{\alpha(\lambda_1) }.
\end{equation}
We set $r_{\varepsilon}=C_4\sqrt\varepsilon$. \eqref{C42} implies $U_{1,j}(r_{\varepsilon})\geq \frac{C_1}{2}r_{\varepsilon}^{\alpha(\lambda_1)}$, so that
\begin{equation*}
\begin{aligned}
&\Big(\fint_{\partial B_{r_{\varepsilon}}}a(\xi)|\bar u-\bar u(0')|^2d\sigma\Big)^{1/2}\\
&\geq \Big(\fint_{\partial B_{r_{\varepsilon}}}a(\xi)( u_1(r,\xi)-u_1(0'))^2d\sigma\Big)^{1/2}-\Big(\fint_{\partial B_{r_{\varepsilon}}}a(\xi)(u_2(r,\xi)-u_2(0'))^2d\sigma\Big)^{1/2}\\
&\geq U_{1,j}(r_{\varepsilon})-C_0 \varepsilon^{\frac{\alpha(\lambda_1)}{2}}\\
&\geq \frac{C_1}{2}r_{\varepsilon}^{\alpha(\lambda_1)}-C_0 \varepsilon^{\frac{\alpha(\lambda_1)}{2}}= \Big(\frac{C_1C_4^{\alpha(\lambda_1)}}{2}-C_0\Big)\varepsilon^{\frac{\alpha(\lambda_1)}{2}}\geq \frac{C_1C_4^{\alpha(\lambda_1)}}{4}\varepsilon^{\frac{\alpha(\lambda_1)}{2}}.
\end{aligned}
\end{equation*}
Hence, there exists an $\xi_0\in \bS^{n-2}$ such that $|\bar u(r_{\varepsilon},\xi_0)|\geq \frac{1}{C}\varepsilon^{\frac{\alpha(\lambda_1)}{2}}.$ Since $\bar u$ is the average of $u$ in the $x_n$ direction, there exists an $x_n$ such that 
\begin{equation*}
|u(x_{\varepsilon}',x_n)|\geq \frac{1}{C}\varepsilon^{\frac{\alpha(\lambda_1)}{2}}, \quad x_{\varepsilon}'=(r_{\varepsilon},\xi_0).
\end{equation*}
This, together with $u(0)=0$, implies that \eqref{eqThm3_lower} holds. 
\end{proof}

\noindent{\bf Acknowledgements.} The work of H. Li was partially supported by Beijing Natural Science Foundation (No.1242006) and the Fundamental Research Funds for the Central Universities (No.2233200015).\\

\noindent{\bf Conflict of Interest.} The authors declare that they have no conflict of interest.

\end{document}